\tikzset{node distance=2cm, auto}
\newsavebox{\@brx}
\newcommand{\llangle}[1][]{\savebox{\@brx}{\(\m@th{#1\langle}\)}%
  \mathopen{\copy\@brx\kern-0.5\wd\@brx\usebox{\@brx}}}
\newcommand{\rrangle}[1][]{\savebox{\@brx}{\(\m@th{#1\rangle}\)}%
  \mathclose{\copy\@brx\kern-0.5\wd\@brx\usebox{\@brx}}}
\def \vphi {\varphi}
\def \eps {\varepsilon}
\def \vW {\mathcal{W}}
\def \an {\mathrm{an}}
\def \cC {\mathbb{R}}
\def \cPC {\cP^1(\cC)}
\def \cN {\mathbb{N}}
\def \cC {\mathbb{C}}
\def \cQ {\mathbb{Q}}
\def \cR {\mathbb{R}}
\def \cZ {\mathbb{Z}}
\def \cP {\mathbb{P}}
\def \vF {\mathcal{F}}
\def \Const {\mathrm{Const}}
\def \Der {{\mathrm{Der}}}
\def \< {{\langle}}
\def \> {{\rangle}}
\def \Ker {{\mathrm{Ker}}}
\def \Im {{\mathrm{Im}}}
\def \Re {{\mathrm{Re}}}
\def \Paths {{{\Lambda}}}
\newtheorem{Theorem}{Theorem}[section]
\newtheorem*{MainTheorem}{Main Theorem}
\newtheorem{Definition}[Theorem]{Definition}
\newtheorem{Proposition}[Theorem]{Proposition}
\newtheorem{Lemma}[Theorem]{Lemma}
\newtheorem{Corollary}[Theorem]{Corollary}
\newtheorem{Rem}[Theorem]{Remark}
\newtheorem{Rems}[Theorem]{Remarks}
\newtheorem{Exam}[Theorem]{Example}
\newtheorem{Exams}[Theorem]{Examples}
\newenvironment{Remark} {\begin{Rem} \sc \rm }{\end{Rem}}
\newenvironment{Remarks} {\begin{Rems} \sc \rm }{\end{Rems}}
\newenvironment{Example} {\begin{Exam} \rm }{\end{Exam}}
\def \Merom {\mathcal{M}}
\def \Diff {\mathrm{Diff}}
\def \NF {\mathrm{NF}}
\def \NFPT {\mathrm{NFPT}}
\def \BNF {\mathrm{BNF}}
\def \PNF {\mathrm{PNF}}
\def \fin {\mathrm{finite}}
\def \arg {\mathrm{arg}}
\def \trans {\mathrm{T}}
\let\oldmarginpar\marginpar
\renewcommand\marginpar[1]{\-\oldmarginpar[\raggedleft\footnotesize #1]%
{\raggedright\footnotesize #1}}
\def \maxn {\mathfrak{n}}
\def\id {\mathbb{1}}
\def\rightstar {\buildrel * \over \rightarrow}
\def\leftstar {\buildrel * \over \leftarrow}
\def\leftrightstar {\buildrel * \over \leftrightarrow}
\def\het {\mathrm{height}}
\DeclareMathAlphabet{\mathpzc}{OT1}{pzc}{m}{it}
\def \tl {\boldsymbol{l}}
\def \te {\boldsymbol{e}}
\def \ts {\boldsymbol{s}}
\def \tt {\boldsymbol{t}}
\def \ti {\boldsymbol{w}}
\def \tp {\boldsymbol{p}}
\def \ttheta {\boldsymbol{\theta}}
\def \Free{\mathpzc{F}}
\def \Gr{{\mathpzc{G}}}
\def \PGr{{\Gamma\mathpzc{G}}}
\def \NGr{{\mathpzc{N}}}
\def \HGr{{\mathpzc{H}}}
\def \LGr{{\mathpzc{L}}}
\def \Rel {\mathrm{Rel}}
\def \source {\mathrm{s}}
\def \target {\mathrm{t}}
\def \lex {\mathrm{lex}}
\def \transdeg {\mathrm{tr.deg.}}
\def \Aff{\mathrm{Aff}}
\def \Pow{\mathrm{Pow}}
\def \fexp{\mathbb{exp}}
\def \texp{\Phi}
\def \Coll {\mathcal{C}}
\def \Germ {\mathrm{Germ}}
\def \PSL{{\mathrm{PSL}(2,\cC)}}
\def \Exp{\mathrm{Exp}}
\def \gr{G}
\def \Homeo {\mathrm{Homeo}}
\def \stable{\mathtt{k}}
\def \Obj{\mathrm{Obj}}
\def \gtranss {\mathscr{P}}
\def \transs {\mathbb{T}}
\def \monoid {\mathcal{M}}
\def \aug {\mathrm{aug}}
\def \tame {\mathrm{tame}}
\def\@tvsp{\mathchoice{{}\mkern-5.5mu}{{}\mkern-5.5mu}{{}\mkern-3.5mu}{}}
\def\ltriple{\left[\@tvsp\left[\@tvsp\left[}
\def\rtriple{\right]\@tvsp\right]\@tvsp\right]}
\begin{document}

\title{$\PSL$, the exponential and some new free groups}

\author[D. Panazzolo]{Daniel Panazzolo \footnote{This work has been partially supported by the ANR project STAAVF and by the CAPES/COFECUB project MA731-12.}}

\newcommand{\Addresses}{{
  \bigskip
  \footnotesize

  \textsc{Laboratoire de Math\'{e}matiques, Informatique et Applications, Universit\' e de Haute-Alsace, France}\par\nopagebreak
  \textit{E-mail address}: \texttt{daniel.panazzolo@uha.fr}

  \medskip

  \textsc{Universit\' {e} de Strasbourg, France}

}}

\date{}
\maketitle
\abstract{We prove a normal form result for the groupoid of germs generated by $\PSL$ and the exponential map.  
We discuss three consequences of this result: (1) a generalization of a result of Cohen about the group of translations and powers, which gives a positive answer to a problem posed by Higman; (2) a proof that the subgroup of 
$\Homeo(\cR,+\infty)$ generated by the positive affine maps and the exponential map is isomorphic to a HNN-extension; (3) a {\em finitary }version of the immiscibility conjecture of Ecalle-Martinet-Moussu-Ramis}
\tableofcontents
\section{Introduction}
\subsection{Normal forms}\label{subsect-intronormalforms}
We recall some basic concepts and terminology from the theory of groupoids (see e.g.\ \cite{MR2273730}).  A groupoid is a category $\Gr$ whose objects $\Obj(\Gr)$ form a set and in which every morphism is an isomorphism. For each $x,y \in \Obj(\Gr)$, we denote by $\Gr(x,y)$ the set of morphisms in $\Gr$ from $x$ to $y$. We denote also by $\Gr$ the disjoint union of $\Gr(x,y)$, for all $x,y \in \Obj(\Gr)$. The composition of morphism is written multiplicatively: if $f \in \Gr(x,y)$ and $g \in \Gr(y,z)$, then these morphisms can be composed and its composition is the morphism $g f \in \Gr(x,z)$.  From now on, when we write the expression $g f$ for two morphisms $f,g$, we are tacitly assuming that these morphism can be composed.  The symbol $\id$ will generally denote the identity morphism and $f^{-1} \in \Gr(y,z)$ will denote the inverse of the morphism $f \in \Gr(x,y)$. 
We will say that morphism $f$ has
{\em source} $x = \source(f)$ and  {\em target}  $y = \target(f)$ if  $f \in \Gr(x,y)$. The group $\Gr(x,x)$ will 
be called {\em vertex group } at $x$ and will be denoted simply by $\Gr(x)$. 

A finite sequence of morphisms $[f_1, f_2, \ldots, f_n]$ in $\Gr$ is called a {\em path} if $\source(f_i) = \target(f_{i+1})$.   
Given such a path, we will say that 
$f = f_1 \cdots f_n \in \Gr$ is the morphism {\em defined} by the path.  
The operation of concatenation in the set of paths is defined in the obvious way, 
taking into account the source/target compatibility. 

A path $[f_1, f_2, \ldots, f_n]$ is called {\em reduced }if:
\begin{itemize}
\item no two consecutive morphisms $f_i$, $f_{i+1}$ are mutually inverse.
\item if some $f_i$ is the identity morphism then $n = 1$ and $f = [\id]$. 
\end{itemize} 
We can give a groupoid structure to the set of reduced paths.   The operation of composition of two paths is defined as 
follows: first concatenate the paths and then successively eliminate all consecutive terms which are mutually inverses.  
The resulting groupoid is called the {\em free groupoid on the graph of $\Gr$} (\cite{MR2273730}, section 8.2).

Given a differentiable manifold $M$, let $\Gr(M)$ denote the {\em Haefliger groupoid over $M$} (see e.g.~\cite{MR2012261}, section 5.5).   We recall that, by definition, the set of objects $\Obj(\Gr(M))$ is the set of points of $M$ and, for each $p,q \in M$, $\Gr(M)(p,q)$ is  
the set of all germs of diffeomorphisms $(M,p) \rightarrow (M,q)$.  In order to keep the traditional naming, we will refer to the morphisms of $\Gr(M)$ simply as {\em germs}.

Given a map $f: U \rightarrow V$, where $U,V \subset M$ are open sets and $f$ is a local diffeomorphism (i.e.~locally invertible), we denote by
$\Germ(f) \subset \Gr(M)$ the smallest wide subgroupoid containing all the germs $f_{,p}$ of $f$ at all points $p$ of its domain.  We recall that a subgroupoid $\Gr_1$ of a groupoid $\Gr_2$ is called {\em wide} if $\Obj(\Gr_1) = \Obj(\Gr_2)$.

More generally, given an arbitrary collection $\Coll$ of local diffeomorphisms as above, we denote by $\Germ(\Coll) \subset \Gr(M)$ the smallest subgroupoid containing $\Germ(f)$, for all $f \in \Coll$.

From now on, we shall assume that $M = \cPC$ and that all maps are holomorphic.  As a basic object, we will frequently consider the groupoid $\Gr_{\Exp} = \Germ(\exp)$ associated to the usual exponential map.  This is the groupoid whose germs at each point are given by
finite compositions  $f = f_1 \cdots f_n$ of the following germs
$$
\{\id_{,p}\} \cup \{\exp_{,p} : \text{if } p \in \cC\} \cup \bigcup_{k \in \cZ} \{\ln_{k,p} : \text{ if }p \in \cC^*\}
$$
where $\id_{,p}$ is the the identity germ at $p$ and $\ln_{k,p}$ is the germ at $p$ of the $k^{th}$-branch of the logarithm, i.e. the map
\begin{align*}
\ln_k : \cC^* &\longrightarrow J_k = \{x + i y : y \in ](2k-1)\pi i, (2k+1)\pi i] \}\\
z &\longmapsto \ln(|z|) + i\, \arg_k(z)
\end{align*}
where $\arg_k: \cC \rightarrow ](2k-1)\pi i, (2k+1)\pi i]$ is the $k^{th}$-branch of the argument function.  In general, we have the relation 
$$\exp_{,q} \log_{k,p} = \id_{,q}$$ 
for all $p \in \cC$ and $q = \log_{k}(p)$.  On the other hand, 
$$\log_{k,q} \exp_{,p} : z \mapsto z  + 2\pi i (k-s)$$ 
for all $p \in J_s$ and $q = \exp(p)$.  In particular, notice that germ corresponding to the translation by $2\pi i$ lies in 
$\Gr_{\Exp}$.  

\begin{figure}[htbp]
\begin{center}
{ \input{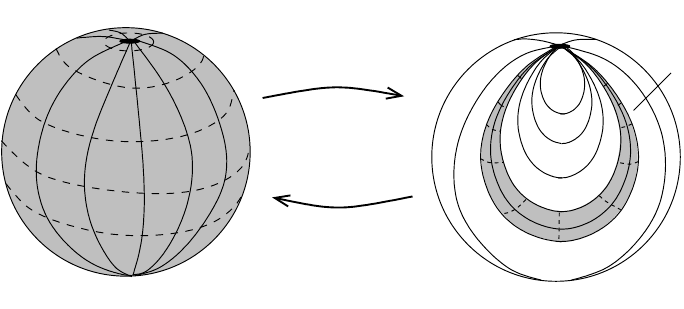_t}}
\label{exponentialfol}

\end{center}
\end{figure}

In what follows, we are going to simplify the notation and omit the subscripts ${}_{,p}$ when referring to the 
germ of a local diffeomorphism at a point $p$ of its domain.  Thus, the same symbol, say $\exp$, will denote both the exponential 
map and the germ at each point of its domain.  In the situation where we want to emphasize that we are considering its germ 
at a specific point $p$, we will simply write that $\source(\exp) = p$. 
We also introduce the following symbols for the (germ of) exponential map and the zeroth branch of the logarithm:
$$
\te : z \mapsto \exp(z), \quad \tl : z \mapsto \ln_0(z).
$$
Another important object for us is the groupoid 
$$\Gr_{\PSL} = \Germ(\PSL).$$
We recall that the group $\PSL$ is generated by the subgroups 
\begin{align*}
W &= \{\id: z \mapsto z, \ \ti: z \mapsto 1/z\}, \quad
T = \{\tt_a : z \mapsto z + a ,\;  a \in \cC \} \\ 
S &= \{\ts_\alpha: z \mapsto \alpha z ,\; \alpha \in \cC^*\}
\end{align*}
which are, respectively,  the {\em involution}, the {\em translations} and the {\em scalings}.  We denote by $\Aff \subset \PSL$ the subgroup of affine maps, i.e.~$\Aff = T \rtimes S$.  

Following the above notational convention, the same symbols $\tt_a,\ts_\alpha$ and $\ti$ will be used to denote the corresponding germs in $\Gr_{\PSL}$. 

Our main result is a normal form for elements in the groupoid  
$$
\Gr_{\PSL,\Exp} = \Germ(\PSL \cup \{\exp\})
$$
In order to state this result, consider the subgroups $H_0,H_1 \subset \PSL$ given by
$$H_0 = T \rtimes \{\ts_{-1}\}, \quad\text{ and }\quad H_1 = S \rtimes \{\ti\}.$$
For the next definition, we recall that, given a group $G$ and a subgroup $H \subset G$, a {\em right transversal for $H$} is a subset $\trans \subset G$ of representatives for the right cosets $\{Hg : g \in G\}$ which contains the identity of $G$.
\begin{Definition}\label{def-nf}
Let $\trans_0,\trans_1 \subset \PSL$ be right transversals for $H_0,H_1$, respectively.  A {\em $(\trans_0,\trans_1)$-normal form} in $\Gr_{\PSL,\Exp}$ is a path
$$
g = [g_0, h_1,g_1, \ldots, h_n,g_n], \quad n \ge 0
$$
such that  the following conditions hold:
\begin{itemize}
\item[(i)] The germ $g_0$ lies in $\Gr_\PSL$.
\item[(ii)] For each $1 \le i \le n$, $h_i \in \{\te,\tl\}$.
\item[(iii)]  If $h_i = \te$ then $g_i \in \trans_0$.
\item[(iv)]  If $h_i = \tl$ then $g_i \in \trans_1$.
\item[(v)]  There are no subpaths of the form $[\te,\id,\tl]$ or $[\tl,\id,\te]$.
\end{itemize}
We denote by $\NF_{\trans_0,\trans_1}$ the set of all $(\trans_0,\trans_1)$-normal forms.  The path $[\id]$ will be called the {\em identity normal form}.
\end{Definition}
There is an obvious mapping 
$$\vphi: \NF_{\trans_0,\trans_1} \rightarrow \Gr_{\PSL,\Exp}$$ 
which associates to each normal form $g = [g_0,h_1,\ldots,g_n]$ the 
germ $\vphi(g) = g_0 h_1 \cdots g_n$. The main goal of this paper will be to study the surjectivity and injectivity properties of this mapping.
\begin{Remark}\label{remark-defNF}
As we shall see in Lemma~\ref{lemma-nfPSL}, a possible choice of transversals $\trans_0,\trans_1$ for $H_0,H_1$, respectively, is as follows:
\begin{align*}
\trans_0 &= \{\ts_\rho : \rho \in \Omega\} \cup \{\ts_{\rho} \ti \tt_b : \rho \in \Omega, b \in \cC\} \\
\trans_1 &= \{\tt_b : b \in \cC\} \cup \{\tt_{a} \ti \tt_b : a \in \Omega, b \in \cC^* \setminus \{-1/a\}\} \cup \{\tt_c \ti : c \in \cC^*\}
\end{align*}
where $\Omega  = \{\alpha : \Re(\alpha) > 0\} \cup \{\alpha : \Re(\alpha) = 0, \Im(\alpha) > 0\}$ is the region shown in 
figure~\ref{regionOmega}.\footnote{In fact, we could define similar transversals by choosing any region in $\cC^*$ which is a fundamental domain for the $\cZ_2$-action 
$z \mapsto -z$ and contains $1$.}

\begin{figure}[!h]
\centering
\begin{tikzpicture}[every node/.append style={font=\scriptsize}]

    % Draw the axes
    \draw [->,black,line width=0.8pt] (-1.5,0) -- (1.5,0) ;
    \draw [->,black,line width=0.8pt] (0,0) -- (0,1.5) ;
    \draw [dashed,black,line width=1.2pt] (0,-1.5) -- (0,0) ;
    \draw (0.5,0) node {$\bullet$};
   
     \node[below] at (0.5,0) {$1$};
     \node[left] at (1,0.8) {$\Omega$};
     \node[above] at (0,1.5) {$\Im(\alpha)$};
     \node[right] at (1.5,0) {$\Re(\alpha)$};
     
     \draw[fill=gray,opacity=0.3]  (0,-1.5) -- (0,1.5) -- (1.5,1.5) -- (1.5,-1.5) -- cycle;
     
     \end{tikzpicture}
\caption{The region $\Omega$}
\label{regionOmega}
\end{figure}
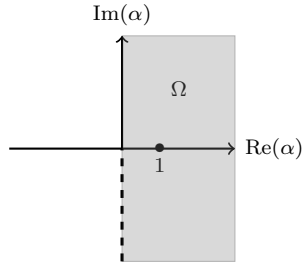

\end{Remark}
From now on, in order to simplify the exposition, we shall fix the choice of transversals $\trans_0,\trans_1$ as described in the remark \ref{remark-defNF},  and write $\NF_{T_0,T_1}$ simply as $\NF$.  Each result that we are going to discuss in the remaining of the paper can be appropriately translated to different choices of transversals.  

In order to state the Main Theorem, we need define certain special normal forms.  To simplify the notation, we shall frequently omit the square braces and write a path $[g_1,\ldots,g_n]$ simply as $g_1\cdots g_n$.

For each $\alpha \in \cC^*$, the {\em power map with exponent $\alpha$ }is the germ defined by
$$
\tp_\alpha = \te\, \ts_\alpha\, \tl,
$$
i.e.~the germ of power map $z \mapsto z^\alpha$ obtained by choosing the zeroth branch of the logarithm. 

A normal form $a \in \NF$ will be called an {\em algebraic path} (resp.~{\em rational path}) of length $n \ge 0$ if it has the form
$$
a = \theta_0\,  \tp_{\alpha_1} \theta_1\, \cdots\,   \tp_{\alpha_n} \theta_n, 
$$
where $\alpha_i$ are exponents in  $\Omega \cap \cQ$ (resp. $\alpha_i \in \Omega \cap \cZ$), and
$$\theta_0 \in \PSL, \quad \theta_n \in T_1\quad \text{Êand }\quad \theta_1,\ldots,\theta_{n-1} \in T_1 \setminus \{\id\}.$$  
We will say that $a$ is of {\em affine type }if $\theta_i$ is an affine map for each $0 \le i \le n-1$.  All paths  of length $n = 0$ are of affine type.

Notice that each path $g \in \NF$ can be decomposed  as 
$$
g = a_0\, \gamma_1\, a_1\, \cdots\, \gamma_m\, a_m, \quad m \ge 0
$$
where each $a_i$ is an algebraic path and each $\gamma_i$ is either $\te$, $\tl$ or a power germ $\tp_{\alpha_i}$ with an exponent 
$\alpha_i \in \Omega \setminus \cQ$.  This decomposition is unique if we further require that there are no subpaths of the form
$$
\gamma_i\, a_i\, \gamma_{i+1} = \te\, \ts_\alpha\, \tl.
$$
In other words, we assume that each subpath $\te\ts_\alpha\tl$ is grouped together into written as the power map $\tp_\alpha$.  

The above unique decomposition of $g$ will be called the {\em algebro-transcendental }decomposition.  Each $a_i$ is will be called a
{\em maximal algebraic subpath }of $g$. 

The natural number $m$ will be called the {\em height }of $g$ and noted $\het(g)$.  
Hence, normal forms of height zero correspond to algebraic paths.

Given symbols $\eta_1,\eta_2 \in \{\te,\tl,\tp\}$, we will say that the maximal algebraic subpath $a_i$ {\em lies in a  
$[\gamma,\eta]$-segment }, if $\gamma_{i} = \eta_1$ and $\gamma_{i+1} = \eta_2$.  
\begin{Example}\label{example-tamenf}
The path $g = \te\, \ts_{\sqrt{2}}\, \tl \, \tl\, \tt_1 \te\, \ts_2\, \tl\, \tt_1\,\ti\, \te\, \ti$ is a normal form with algebro-transcendental decomposition
$$
g = \underbrace{\vphantom{\tp_{\sqrt{2}}} \id}_{a_0}\;  \underbrace{\tp_{\sqrt{2}}}_{\gamma_1}\; \underbrace{\vphantom{\tp_{\sqrt{2}}}\id}_{a_1}\;  \underbrace{\vphantom{\tp_{\sqrt{2}}}\tl}_{\gamma_2}   \underbrace{\vphantom{\tp_{\sqrt{2}}}\tt_1 \tp_2 \tt_1\ti}_{a_2} \underbrace{\vphantom{\tp_{\sqrt{2}}}\te}_{\gamma_3} \; \underbrace{\vphantom{\tp_{\sqrt{2}}}\ti}_{a_3}\; 
$$
The maximal algebraic subpaths $a_1$ and $a_2$ lie in $[\tp,\tl]$ and $[\tl,\te]$ segments, respectively. 
Notice that maximal algebraic subpaths can be the identity, as it is the case of $a_0$ and $a_1$.  
\end{Example}
We will say that a normal form $g \in \NF$ is {\em tame }if
\begin{itemize}
\item[(i)] Either $\het(g) = 0$ and $g$ is an algebraic path of affine type. 
\item[(ii)] Or $\het(g) \ge 1$ and each maximal algebraic subpath lying in a segment of type
$$
[\te,\tl],\; [\tl,\te],\; [\tl,\tp],\; [\tp,\te],\;\text{ or  }\; [\tp,\tp]
$$
is of affine type.
\end{itemize}
For instance, the normal form of the previous example is tame. We shall denote by $\NF_\tame$ the subset of tame normal forms.  
\begin{MainTheorem}[Normal form in $\Gr_{\PSL,\Exp}$]
The mapping 
$$\vphi: \NF \rightarrow \Gr_{\PSL,\Exp}$$
is surjective.  Moreover, this mapping is injective when restricted to $\NF_\tame$.
\end{MainTheorem}
The study of non-tame normal forms puts into play some difficult problems concerning the study of finite coverings $\cPC \rightarrow \cPC$ with
imprimitive monodromy groups.  This issue is strongly related to the well-known Ritt's decomposability theorem \cite{MR1501189}, which fully describes 
{\em monoid }Êstructure of the polynomials under the composition operation.

The following example shows that we cannot expect the map $\rho:\NF \rightarrow \Gr_{\PSL,\Exp}$ to be bijective without further restrictions.
\begin{Example}\label{example-dihedralT2}
Each Chebyshev polynomial $T_n(x)$ lies in $\Gr_{\PSL,\Exp}$, as it can be defined by the identity
$$
T_n = \vphi\, \tp_n\, \vphi^{-1}
$$
where $\tp_n(z) = z^n$ and $\vphi(z) = z + 1/z$ is given explicitly by
$$
\vphi = \ts_{-1}\tt_2 \ti \tt_{-1/4}\tp_2 \tt_{-1/2}\ti \tt_1.
$$
On the other hand, we can also express $T_2(z) = z^2 - 2$ as $\tt_{-2} \tp_2$.  Hence, the relation
$$
T_2\Big( z  + \frac{1}{z}\Big) = z^2 + \frac{1}{z^2}
$$
is equivalent to say that the normal form 
$$
\ts_{-1} \tt_2 \ti \tt_{-1/4}\tp_2 \tt_{-1/2}\ti \tt_1 \tp_2 \tt_{-1} \ti \tt_{1/2} \tp_{1/2}\tt_{1/4}\ti \tt_{-2} \tp_{1/2} \tt_2$$
defines the identity germ.  Of course, this normal form is not tame. 
\end{Example}
\begin{Remarks}\label{remarks-nf}
(1) 
Some readers will probably notice the similarities between the above normal form and Britton's normal form for HNN-extensions (see e.g. \cite{MR1812024}, IV.2).  
Indeed, there is a general notion of HNN-extension for groupoids 
(\cite{MR2273730}, section 8.4.1) which generalizes the usual notion for groups (see subsection~\ref{subsect-HNN-ext}).  At a first view, one could expect to prove that $\Gr_{\PSL,\Exp}$ is isomorphic to the HNN-extension of 
$\Gr_{\PSL}$ with the exponential $\te$ acting as the {\em stable letter}, i.e.~somehow conjugating the subgroupoids  $\Gr_{H_0}$ and $\Gr_{H_1}$.

This cannot hold (at least not in such a na\"{\i}ve way).  In fact, consider the subgroupoids
$\LGr_0$ and $\LGr_1$ obtained by restricting the groupoids $\Gr_{H_0}$ and $\Gr_{H_1}$ to the subdomains $\cC$ and $\cC^*$,  respectively.
Then, the exponential map indeed defines a morphism of groupoids by
\begin{align*}
\Theta : \LGr_0 &\longrightarrow \LGr_1\\
h &\longmapsto \te\, h\, \te^{-1}
\end{align*}
where the germ $\te^{-1}$ is chosen in such a way that $\source(h) = \target(\te^{-1})$.  At the level of objects, this induces the mapping $\Theta : \cC \rightarrow \cC^*$, 
$\Theta(p) = \exp(p)$.  

However, $\Theta$ is not an isomorphism of groupoids, since 
it annihilates all germs $\tt_{2\pi i k}$, with $k \in \cZ$; and identifies each two points in $\cC$ which differ by an integer multiple of $2\pi i$.  

As a matter of fact, $\Theta$ establishes an isomorphism between the groupoid $\LGr_1$ and the {\em quotient groupoid} $\LGr_0/\Ker(\Theta)$, which is simply the groupoid with the object set $\cC/2\pi i \cZ$ and morphisms given by the action of 
$\{\tt_a : a \in \cC\}$ and $\ts_{-1}$ modulo $2\pi i \cZ$.
   
(2) It is easy to see that $\Gr_{\PSL,\Exp}$ coincides with $\Gr_{T,\Exp}$, i.e.~the groupoid generated only by the translations and the exponential.  Indeed, one easily constructs the subgroups $S$ and $W$ by defining 
$$\ts_\alpha =  \te \tt_{\ln_0(\alpha)} \tl, \quad\text{ and }\quad \ti = \te^2 \tt_{i\pi} \tl^{2}.$$
for all $\alpha \in \cC^*$.  The Normal form Theorem could be formulated solely in terms of paths in $\Gr_{T,\Exp}$.  However, this would lead to a much more complicated enunciation and to the loss of  the analogy with the theory of HNN-extensions. 

(3) Notice that $\Gr_{\PSL}$ has a natural Lie groupoid structure, which is inherited from \' etale groupoid structure of 
$\Gr(\cPC)$ (see \cite{MR2012261}, section 5.5). Some readers may be wondering which is the relation between 
$\Gr_{\PSL}$ and the so-called {\em semi-direct product Lie groupoid }
$$\PSL \ltimes \cPC$$ 
which is naturally defined by the action of $\PSL$ on $\cPC$ (see \cite{MR2012261}, section 5.1).  

One can show that $\PSL \ltimes \cPC$ and $\Gr_{\PSL}$ are isomorphic as groupoids, but not as Lie groupoids.  Indeed, the source fibers of $\Gr_\PSL$ (i.e.~the sets $\source^{-1}(p)$, $p \in \cPC$)  have a discrete topology while all source fibers of $\PSL \ltimes \cPC$ are manifolds diffeomorphic to $\PSL$.  

(4) Another interesting construction can be obtained by combining the groupoids $\Germ(\PSL)$, $\Germ(exp)$ and $\Germ(\wp,\wp^\prime)$, where
$$\wp \colon \cC/\Lambda \to \cPC$$
is the Weierstrass  function associated to a period lattice $\Lambda \subset \cC$.  In this case, the resulting groupoid $\Gr$ would contain a rich class of rational maps, so-called {\em finite quotients of affine maps }(see \cite{MR2348953}), i.e.~rational maps $f$ of degree two or more which fit into commutative diagrams of the form
$$
\begin{tikzpicture}
  \matrix (m) [matrix of math nodes,row sep=3em,column sep=5em,minimum width=2em,text height=1.5ex, text
depth=0.25ex]
  {
     \cC/\Lambda & \cC/\Lambda \\
     \cPC &  \cPC \\};
  \path[-stealth]
    (m-1-1) edge node [left] {\small $\Theta$} (m-2-1)
            edge  node [above] {\small $l$} (m-1-2)
    (m-2-1) edge node [above] {\small $f$} (m-2-2)
    (m-1-2) edge node [right] {\small $\Theta$} (m-2-2);
\end{tikzpicture}
$$
where $l(z) = az+b$ is an affine map defined on $\cC/\Lambda$ and  $\Theta: \cC/\Lambda \to \cPC$ is a finite covering.  For instance 
(see \cite{MR2193309}, Problem 7-f), for $\Lambda = \cZ \oplus i \cZ$ and $l(z) = (1+i) z$, the germ 
$\wp \, l \, \wp^{-1}$ is (up to a conjugation by a M\" oebius map) the quadratic rational map $h(z) = (z + 1/z)/2i$.
\end{Remarks}
\subsection{Powers and affine maps}\label{subsection-poweraffmaps}
As a consequence of the Main Theorem, we are going to obtain a generalization of a result of S. Cohen. Let $R$ be an arbitrary multiplicative subgroup of $\cC^*$ and let $\Pow_R$ be the set of germs determined by all the branches of the power maps
$$
\cC^* \ni z \mapsto z^r, \quad\text{ with }r \in R
$$
Clearly, the associated groupoid $\Germ(\Pow_R)$ is simply obtained by taking the union of $\Pow_R$ with the identity germs $\id$ at $0$ and $\infty$. 
As above, for each $r \in R$, we denote by 
$$\tp_r = \te \ts_r \tl$$ 
the germ of power map obtained by choosing the zeroth branch of the logarithm.

Initially motivated by a question of Friedman, several authors (cf. \cite{MR1184319}) considered the
groupoid 
$$
\Gr_{\Aff,\Pow_R} = \Germ(\Aff,\Pow_R)
$$
whose elements are obtained by finite compositions of germs of affine and power maps.  In particular, they studied the following property:
\begin{Definition}
We will say that $\Gr_{\Aff,\Pow_R}$ has the {\em amalgamated structure property} if 
each element $\Gr_{\Aff,\Pow_R}$ can be uniquely defined by a path
$$
[g_0, \, \tp_{r_1},\tt_{a_1}, \ldots, \tp_{r_{n}},\tt_{a_{n}}]
$$
for some $n \ge 0$, where $g_0  \in \Germ(\Aff)$, $r_i \in R\setminus\{1\}$ and $a_i \in \cC$ for $i = 1,\ldots,n$, such that $a_i$ is nonzero for $1 \le i \le  n-1$.  
\end{Definition}
In particular, this property implies that, given $n \ge 1$ and two sequences of constants $r_1,\ldots,r_n \in R\setminus\{1\}$ and $a_1,\ldots,a_n \in \cC$ 
with $a_i$ nonzero for $1 \le i \le n-1$,   the germ defined by
$$
z \mapsto (a_1 + (a_2 + \cdots + (a_n + x)^{r_n}\cdots )^{r_2})^{r_1}
$$
(where we choose arbitrary branches for the  power maps)  cannot be the identity.

Building upon a method originally introduced by White in \cite{MR969681},  Cohen proved in \cite{MR1326133} that 
$\Gr_{\Aff,\Pow_{\cQ_{>0}}}$ has the amalgamated structure property (i.e.~one takes $R$ equal to  $\cQ_{>0}$).

Using our normal form Theorem, we prove the following:
\begin{Theorem}\label{theorem-amalgamatedpoweraffine}
The groupoid $\Gr_{\Aff,\Pow_R}$ has the amalgamated structure property if  $R \cap \cQ_{< 0} = \emptyset$.
\end{Theorem}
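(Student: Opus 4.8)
The plan is to factor the map that sends an amalgamated path to its germ through the set of tame normal forms $\NF_\tame$ and then invoke the injectivity half of the Main Theorem. Existence of a representation is elementary and does not use the Main Theorem: an element of $\Gr_{\Aff,\Pow_R}$ is a finite composition of germs of affine maps and of branches of power germs $z\mapsto z^r$ ($r\in R$). Each branch equals $\ts_\mu\,\tp_r$ for a suitable scaling (a leftward affine factor); consecutive affine germs merge into a single element of $\Germ(\Aff)$; the relation $\tp_r\,\ts_\alpha=\ts_{\alpha^r}\,\tp_r$ lets one push every scaling that sits between two powers to the far left, leaving only translations between the powers; and two powers with no translation between them merge, up to a leftward affine factor, into $\tp_{rr'}$ (using that $R$ is a group), deleting any $\tp_1=\id$ and re-merging whenever an interior translation becomes trivial. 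This yields a path of the required shape, so the essential point is uniqueness.

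For uniqueness I would build an explicit injective conversion of amalgamated paths into $\NF_\tame$ that is compatible with $\vphi$. Writing $\Omega$ for the fundamental domain of Remark~\ref{remark-defNF}, every $r\in\cC^*$ satisfies $r\in\Omega$ or $-r\in\Omega$; using $\te\,\ts_{-1}=\ti\,\te$ one gets $\tp_r=w\,\tp_\rho$ with $\rho=\pm r\in\Omega$ and $w\in\{\id,\ti\}$, namely $w=\id$ when $r\in\Omega$ and $w=\ti$ when $-r\in\Omega$. Substituting this into $[g_0,\tp_{r_1},\tt_{a_1},\ldots,\tp_{r_n},\tt_{a_n}]$ and absorbing each involution factor leftward yields
$$P=\theta_0\,\tp_{\rho_1}\,\theta_1\cdots\tp_{\rho_n}\,\theta_n,\qquad \theta_0=g_0 w_1\in\PSL,\quad \theta_i=\tt_{a_i}w_{i+1}\ (1\le i\le n-1),\quad \theta_n=\tt_{a_n},$$
where $w_i$ is the factor attached to $\tp_{\rho_i}$. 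Since $\rho_i\in\Omega$ we have $\ts_{\rho_i}\in\trans_0$, and since $a_i\neq0$ for $1\le i\le n-1$ we have $\theta_i\in\trans_1$; thus $P$ is a genuine $(\trans_0,\trans_1)$-normal form once condition (v) is checked.

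The heart of the matter, and where the hypothesis enters, is tameness. Because $R\cap\cQ_{<0}=\emptyset$, any rational $r_i$ is positive, hence lies in $\Omega$, so $w_i=\id$ for every rational exponent; equivalently, an involution factor $w_{i+1}=\ti$ can occur only when $\rho_{i+1}\in\Omega\setminus\cQ$, that is, only immediately before a transcendental power in the algebro-transcendental decomposition of $P$. Consequently, in every maximal algebraic subpath lying in a $[\tp,\tp]$-segment all interior $\theta$'s (those followed by a rational power) are pure translations, so each such subpath is of affine type; the two extreme subpaths lie in no segment and are unconstrained, and when $\het(P)=0$ the same reasoning makes $P$ an algebraic path of affine type. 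The hypothesis also yields condition (v): $\ts_{\rho_i}=\id$ would force $\rho_i=1$, i.e.\ $r_i\in\{1,-1\}$, both excluded (by $r_i\neq1$ and by $-1\in\cQ_{<0}$), ruling out $[\te,\id,\tl]$; while $\theta_i=\id$ would force $a_i=0$, excluded for interior $i$, ruling out $[\tl,\id,\te]$. Hence $P\in\NF_\tame$.

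It remains to see that the conversion is injective, after which $\vphi|_{\NF_\tame}$ injective (the Main Theorem) finishes the proof. This holds because each $\theta_i$ determines its data uniquely: an element of $\PSL$ is affine or affine-composed-with-$\ti$ according to whether it has a pole, so the factorization $\theta_i=(\text{affine})\cdot w_{i+1}$ recovers $w_{i+1}\in\{\id,\ti\}$, hence $a_i$ and $g_0$, and then $r_i$ is recovered from $\rho_i$ and $w_i$ as $\rho_i$ or $-\rho_i$ (the spurious alternative being a negative rational, which the hypothesis excludes precisely when $\rho_i\in\cQ$). I expect the main obstacle to be the bookkeeping in this last step together with the tameness verification: one must check that the correlation \emph{rational exponent $\Rightarrow$ no involution factor} is exactly what confines all involutions to the transcendental boundaries, so that none of the dangerous segments $[\te,\tl],[\tl,\te],[\tl,\tp],[\tp,\te],[\tp,\tp]$ acquires a non-affine interior. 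This is the single place where $R\cap\cQ_{<0}=\emptyset$ is indispensable.
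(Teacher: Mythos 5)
Your proposal is correct and follows essentially the same route as the paper: reduce the amalgamated path to one with only translations between powers, rewrite each $\tp_{r}$ as $\ti^{\eps}\tp_{\rho}$ with $\rho=\pm r\in\Omega$ and absorb the involution into the preceding factor, use $R\cap\cQ_{<0}=\emptyset$ to see that involutions attach only before irrational exponents (so the resulting normal form is tame and the construction is injective), and conclude by the injectivity half of the Main Theorem. The paper phrases the first reduction via the rewriting system $(\PNF,\rightarrow)$ and its set $\NFPT$ of power-translation normal forms, but the substance is identical.
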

Equivalently, we assume that for each $r \in R$, the ray $r \cQ_{<0}$ does not intersect $R$.
\begin{figure}[!h]
\centering
\begin{tikzpicture}[every node/.append style={font=\scriptsize}]

    % Draw the axes
    \draw [->,black,line width=0.8pt] (-1.5,0) -- (1.5,0) ;
    \draw [->,black,line width=0.8pt] (0,-1.5) -- (0,1.5) ;
    \draw [black,line width=0.8pt] (0,0) -- (1,1) ;
    \draw [dashed,black,line width=1.2pt] (-1.5,-1.5) -- (0,0) ;
    \draw (1,1) node {$\bullet$};
   
     \node[right] at (1,1) {$r \in R$};
     \node[right] at (-1.9,-0.8) {$r\cQ_{<0}$};
\end{tikzpicture}
\label{hypothesisonR}
\end{figure}

\begin{Remark}
Assume $R$ is the multiplicative subgroup of $\cC^*$ generated by $\exp(2\pi i \lambda_1),\ldots,\exp(2 \pi i \lambda_n)$, for some collection of complex numbers $\lambda_1,\ldots,\lambda_n$.  Then, the condition $R \cap \cQ_{< 0} = \emptyset$ is equivalent to  the following {\em non-resonance condition}: 
$$
\Big( \frac{1}{2} + i\ln(\cQ_{>0})\Big) \, \cap \, \big( \cZ  + \lambda_1 \cZ + \cdots + \lambda_n \cZ\big) = \emptyset
$$
where $\ln$ denotes the principal branch of the logarithm function.
\end{Remark}
\subsection{Generalized Witt algebras}\label{subsect-willalgebras}
We describe another consequence of the Normal Form Theorem.   Let $\monoid$ be an additive sub-monoid of $\cC$ (i.e.~a subset $\monoid \subset \cC$ which is closed under addition and contains zero).  Following \cite{MR0396708}, we define the {\em generalized Witt algebra} $\vW(\monoid)$ as the $\cC$-vector space with a basis $\{w_g : g \in \monoid\}$, subject to the Lie multiplication
$$
[w_g,w_h] = (g - h) w_{g + h}.
$$
Each basis element can be represented by a (possibly multivalued) complex vector field on $\cPC$ given by
$$
w_g = z^g \left( z \frac{\partial}{\partial z} \right) 
$$
whose flow at time $a$ is given by the multivalued map
$$
\fexp(a\, w_g) = z \mapsto
\begin{cases}
 (- a g + z^{-g})^{-1/g},& \text{ if }g \ne 0\\
 \exp(a) z,& \text{ if }g = 0.
 \end{cases}
$$
Following the conventions of the first subsection, we are going to denote also by $\fexp(a\, w_g)$ the germs in $\Gr(\cPC)$ obtained by taking all possible determinations of the maps $z \mapsto (- a g + z^{-g})^{-1/g}$ at all points of its domain of definition.
\begin{Example}\label{example-witt}
For $\monoid = \cZ$ we obtain the classical Witt algebra $\vW(\cZ)$. The subalgebra $\vW(\cZ_{\le 0}) \subset \vW(\cZ)$ plays an important role in holomorphic dynamics. The flow maps  in this  subalgebra can be written as
$$
z  \mapsto \fexp(a\, w_{-k})(z) = z\, (1 - a k z^{-k})^{1/k}, \; \quad \forall k \in \cZ_{\ge 0}, \forall a \in \cC
$$
and they generate a well-known subgroup of the group $\Diff(\cC, \infty)$ of germs of holomorphic diffeomorphisms fixing the infinity. 
\end{Example}
\begin{Theorem}\label{theorem-NFinWitt}
Let $\monoid$ be an arbitrary additive sub-monoid of $\cC$.  Then, for all $n \ge 1$, all  scalars $a_1,\ldots,a_n \in \cC \setminus \{0\}$ and all elements $g_1,\ldots,g_n \in \monoid \setminus \{0\}$ such that
$g_{i+1}/g_i \notin \cQ_{<0} \cup \{1\}$, the germ
$$
z \mapsto \fexp(a_1 w_{g_1}) \cdots \fexp(a_n w_{g_n})(z)
$$ 
cannot be the identity. 
\end{Theorem}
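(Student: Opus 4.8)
The plan is to exhibit, for each admissible choice of branches, a \emph{tame} normal form representing the germ $\fexp(a_1w_{g_1})\cdots\fexp(a_nw_{g_n})$ and then invoke the injectivity of $\vphi$ on $\NF_\tame$ from the Main Theorem. The starting point is the identity
$$
\fexp(a\,w_g)=\tp_{-1/g}\,\tt_{-ag}\,\tp_{-g},\qquad g\neq0,
$$
which expresses that in the coordinate $u=z^{-g}$ the flow of $w_g$ is the translation $u\mapsto u-ag$; thus each flow map is a conjugate of a nonzero translation and lies in $\Gr_{\PSL,\Exp}$. Composing the $n$ factors and merging each junction $\tp_{-g_i}\tp_{-1/g_{i+1}}=\ts_{d_i}\tp_{g_i/g_{i+1}}$ (the scaling $\ts_{d_i}$, with $d_i=e^{-2\pi i k_i g_i}$, records the branch discrepancy $\tl\te=\tt_{2\pi i k_i}$ at that junction), I obtain the alternating word
$$
\tp_{-1/g_1}\,\tt_{-a_1g_1}\,\ts_{d_1}\,\tp_{g_1/g_2}\,\tt_{-a_2g_2}\,\ts_{d_2}\cdots\tp_{g_{n-1}/g_n}\,\tt_{-a_ng_n}\,\tp_{-g_n}.
$$
Its power germs carry the two \emph{boundary} exponents $-1/g_1,-g_n$ and the \emph{junction} exponents $g_i/g_{i+1}$, separated by the affine connectors $\tt_{-a_ig_i}\ts_{d_i}$ whose translation parts $-a_ig_i$ are nonzero.

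Next I would push every exponent into $\Omega$ via $\tp_\beta=\ti\,\tp_{-\beta}$ and pass to the algebro-transcendental decomposition. The decisive structural observation is that this word contains no \emph{bare} $\te$ or $\tl$: every occurrence of $\te,\tl$ is already packaged inside a power germ, so all transcendental factors $\gamma_j$ are of type $\tp$ and every maximal algebraic subpath lies in a $[\tp,\tp]$-segment. By the definition of tameness it therefore suffices to check that no interior connector is forced to contain the involution $\ti$. I would trace the possible sources of a $\ti$. An irrational junction exponent $g_i/g_{i+1}\notin\Omega$ yields a $\ti$ that I attach to the \emph{left} of its gamma, i.e.\ into the \emph{last} connector of the preceding subpath, where affine type imposes no condition. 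A rational junction exponent is, by the hypothesis $g_{i+1}/g_i\notin\cQ_{<0}\cup\{1\}$, a positive rational different from $1$; it already lies in $\Omega$, contributes a genuine nontrivial power germ, and produces no $\ti$. Finally the trailing boundary power can be written $\tp_{-g_n}=\tp_{g_n}\ti$, pushing its $\ti$ into the terminal connector, which may be M\"obius.

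The leading power $\tp_{-1/g_1}$ is the only remaining source of a $\ti$: when $\het\ge1$ it enters the boundary subpath $a_0$, which tameness leaves free, but in the height-zero case—all $g_i$ rational, so the whole word is a single algebraic path—it is forced into $\theta_0$, and affine type then fails precisely when $g_1>0$. Here the hypothesis is exactly what saves the day: since each ratio $g_{i+1}/g_i$ is a positive rational, all the $g_i$ share one sign, and I would remove the obstruction with the involution symmetry
$$
\ti\,\fexp(a\,w_g)\,\ti=\fexp(-a\,w_{-g}),
$$
coming from $\ti_*w_g=-w_{-g}$. Conjugating the whole product by $\ti$ replaces the data $(g_i,a_i)$ by $(-g_i,-a_i)$, leaves every ratio $g_{i+1}/g_i$ unchanged (so all hypotheses persist), and turns the all-positive case into the all-negative one, in which $-1/g_1>0$ and $-g_n>0$ lie in $\Omega$ and no boundary $\ti$ arises. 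In every case this produces a bona fide tame normal form that is not the identity normal form $[\id]$ — it has positive height or is a nonidentity algebraic path, the nonzero constants $a_i$ preventing collapse — so by injectivity of $\vphi$ on $\NF_\tame$ the germ cannot be the identity. I expect the most delicate part to be the uniform bookkeeping (over all branch indices $k_i$) needed to confirm that the constructed path literally satisfies every clause of Definition~\ref{def-nf}: transversal membership of the connectors, interior connectors $\neq\id$, and absence of the forbidden subpaths $[\te,\id,\tl]$, $[\tl,\id,\te]$.
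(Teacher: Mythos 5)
Your proposal is correct, and it is worth recording how it relates to what the paper actually does: the paper gives no separate proof of Theorem~\ref{theorem-NFinWitt}, declaring it an ``immediate consequence'' of Theorem~\ref{theorem-amalgamatedpoweraffine}, whose proof is the reduction to a tame normal form followed by the injectivity half of the Main Theorem. Your argument runs the same machine, but directly on the word $\tp_{-1/g_1}\tt_{-a_1g_1}\tp_{-g_1}\cdots\tp_{-1/g_n}\tt_{-a_ng_n}\tp_{-g_n}$, and this extra care is not cosmetic: Theorem~\ref{theorem-amalgamatedpoweraffine} does not literally apply here, because the exponents that occur after merging the junctions are $-1/g_1$, $g_1/g_2,\ldots,g_{n-1}/g_n$, $-g_n$, and any multiplicative group $R$ containing them typically meets $\cQ_{<0}$ (already for $g_1=1$, $g_2=2$ one gets $-1\in R$). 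The genuinely new ingredient you supply is the treatment of the two boundary exponents $-1/g_1$ and $-g_n$: when the height is positive their involutions land in the unconstrained subpaths $a_0$ and $a_m$, and in the height-zero case (all $g_i$ rational, hence of one sign by the ratio hypothesis) the conjugation $\ti\,\fexp(a\,w_g)\,\ti=\fexp(-a\,w_{-g})$ reduces the all-positive case to the all-negative one, where no $\ti$ arises and the path is an algebraic path of affine type with nonzero interior translations. This is exactly the point the paper's one-line deduction glosses over, and your write-up fills it. Two small bookkeeping remarks for the final version: the connectors $\tt_{-a_ig_i}\ts_{d_i}$ are not themselves in the transversal $\trans_1$, so you must migrate the scalings leftward through the powers (via $\tl\ts_c\Rightarrow\tt_b\tl$ and $\te\tt_a\Rightarrow\ts_{\exp(a)}\te$), which preserves the nonvanishing of the translation parts and piles all scalings into $\theta_0$; and you should note that the degenerate boundary powers $\tp_1$ (when $g_1=-1$ or $g_n=-1$) simply disappear without creating a forbidden subpath, the interior junction powers being nontrivial precisely because $g_i/g_{i+1}\neq 1$.
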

\begin{Remark}\label{remark-freepoints}
 The condition $g_{i+1}/g_i \ne 1$ must be imposed due to the trivial relation
$$
\fexp(a w_{g})\fexp(b w_{g}) = \fexp((a+b) w_{g}) 
$$
Moreover, there are numerous counter-examples to the above result if drop the assumption $g_{i+1}/g_i \ne -1$.  For instance, given $a \in \cC^*$, consider the so-called {\em two parabolic group} $G_a \subset \PSL$, which is the group generated by the time $a$ flows maps of $w_{-1}$ and $w_1$, namely
$$
z \mapsto \fexp(a w_{-1})(z) = z + a, \qquad z \mapsto \fexp(a w_{1})(z) = \frac{z}{1 + az} 
$$
Following \cite{MR0258975},  we say that $a$ is a {\em free point} if $G_a$ is a free group.  There are plenty of non-free points. 
For instance, Ree showed in \cite{MR0142612} that the real segment  $]-2,2[$ is contained in an open set where the non-free points are densely distributed. 

Assume that $a$ is a non-free point.  Then, by definition, there exist a $n \ge 1$ and nonzero integers $p_1,q_1,\ldots,p_n,q_n$ such that
$$
\fexp(p_1 a w_{1}) \fexp(q_1 a w_{-1})\cdots \fexp(p_n a w_{1}) \fexp(q_n a w_{-1}) = \id
$$
Clearly, each relation of this type would give a counter-example to the above Theorem if the assumption $g_{i+1}/g_{i+1} \ne -1$ were dropped.  
\end{Remark}
Our next goal is to state a normal form result for the groupoid 
$$
\Gr_{\monoid} = \Germ\big(\{\fexp(a w_{g}) : g \in \monoid, a \in \cC\}\big)
$$
For this, given $g \in \monoid$, and $a \in \cC$, we define the following germ 
$$
\texp_{a,g} = 
\begin{cases}
\tp_{-1/g}\tt_{-ag}\tp_{-g},& \text{ if }g \ne 0\\
\ts_{\exp(a)},& \text{ if }g = 0.
 \end{cases}
$$
where $\tt_a$ and $\ts_\alpha$ are the translation and scalings germs, respectively; and the power map $\tp_r$ is defined as in subsection~\ref{subsection-poweraffmaps}. In other words, $\texp_{a,g} \in \Gr_{\monoid} $ is simply the germ obtained from the (multivalued) flow map $\fexp(a w_g)$ by choosing the zeroth branch of the logarithm in the definition of the power maps $x \mapsto x^{-g}$ and $x \mapsto x^{-1/g}$. 

The phenomena described in the previous remark leads us to define the following concept.  We say that an additive sub-monoid $\monoid$ of $\cC$ has {\em no rational antipodal points} if 
$$\monoid \cap \big( \monoid \cQ_{\le 0} ) =  \{0\}.$$
Using our Main Theorem, we shall prove the following:
\begin{Theorem}[Normal form in $\Gr_{\monoid}$]\label{theorem-NFinmonoid}
Suppose that $\monoid$ has no rational antipodal points.  Then, each element of the groupoid 
$\Gr_{\monoid}$
is uniquely defined by a path
$$
[\texp_{a_0,0}, \texp_{a_1, g_1}, \ldots, \texp_{a_n, g_n}]
$$
for some $n \ge 0$, $g_i \in \monoid$ and $a_i \in \cC$  such that:
\begin{itemize}
\item[(i)] $g_i \in \monoid \setminus \{0\}$ and $a_i \in \cC \setminus \{0\}$, for $1 \le i \le n$,
\item[(ii)] $\Im(a_0)\in ]-\pi i, \pi i]$,
\item[(iii)] $g_i \ne g_{i+1}$, for $1 \le i \le n-1$.
\end{itemize}
\end{Theorem}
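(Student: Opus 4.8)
The plan is to realize each element of $\Gr_\monoid$ as the germ $\vphi(P)$ of an explicit path in $\NF$ and then to extract the data $(a_0;(a_i,g_i))$ from that path by means of the Main Theorem. For existence, recall that $\Gr_\monoid$ is generated by the flow germs $\fexp(a w_g)$, that for $g\neq 0$ one has $\texp_{a,g}=\tp_{-g}^{-1}\tt_{-ag}\tp_{-g}$ (a conjugate of a translation by the rectifying power $z\mapsto z^{-g}$), and that $\texp_{a,0}=\ts_{\exp a}$. Starting from an arbitrary finite composition of generators, I would use the additivity $\fexp(aw_g)\fexp(bw_g)=\fexp((a+b)w_g)$ to merge adjacent factors with equal $g$ (yielding (iii) and the nonvanishing in (i)), sweep all scalings into a single leading $\texp_{a_0,0}$, and normalize $a_0$ modulo $2\pi i$ to obtain (ii). The branch ambiguities of the $\fexp$-germs cost nothing, since the translations by $2\pi i$ and the involution $\ti$ produced by changing branches are themselves products of the $\texp$-generators.

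For uniqueness I first compute the $\NF$-image of $P$. Expanding $\texp_{a_i,g_i}=\te\ts_{-1/g_i}\tl\,\tt_{-a_ig_i}\,\te\ts_{-g_i}\tl$ and collapsing each interior junction $\tp_{-g_i}\tp_{-1/g_{i+1}}$ to the single power $\tp_{g_i/g_{i+1}}$ — which is forced, since condition (v) forbids the bare subpath $[\tl,\id,\te]$ — gives
$$\vphi(P)=\ts_{\exp(a_0)}\,\tp_{-1/g_1}\,\tt_{-a_1g_1}\,\tp_{g_1/g_2}\,\cdots\,\tp_{g_{n-1}/g_n}\,\tt_{-a_ng_n}\,\tp_{-g_n},$$
whose interior separators are the nonzero translations $\tt_{-a_ig_i}$. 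In the algebro-transcendental decomposition the powers with irrational exponent are the transcendental letters $\gamma_j$, while the rational ones are absorbed into the maximal algebraic subpaths. Since a transcendental power $\tp_\alpha$ with $\alpha\in\Omega\setminus\cQ$ can be neither produced nor cancelled by algebraic operations, I would show, using the Main Theorem, that the transcendental skeleton — its height, its ordered exponents, and their positions — is an invariant of the germ $\vphi(P)$. This already recovers every irrational ratio $g_{i-1}/g_i$ together with its location, and $n$ up to the grouping of consecutive algebraic blocks.

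It remains to rigidify the maximal algebraic subpaths, and this is where the real difficulty lies. Because $\texp_{a,g}$ pairs a power with its reciprocal, normalizing the boundary powers $\tp_{-1/g_i},\tp_{-g_i}$ into the fundamental domain $\Omega$ forces involutions $\ti$ into interior positions $\theta_i$; consequently the $\NF$-image of $P$ is in general \emph{not} tame, and the injectivity furnished by the Main Theorem does not apply as a black box. The hypothesis is exactly the remedy: having no rational antipodal points forces each interior ratio $g_{i-1}/g_i\notin\cQ_{\le0}$, and with $g_{i-1}\neq g_i$ also $\neq 1$, which is precisely the condition excluding the imprimitive-monodromy (dihedral/Chebyshev) relations of Example~\ref{example-dihedralT2}; such relations, the only mechanism by which two distinct algebraic words can define the same germ, require a negative rational interface exponent. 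For an algebraic island all of whose exponents lie on a single rational ray I would match its data by the amalgamated-structure / White--Cohen argument behind Theorem~\ref{theorem-amalgamatedpoweraffine}, the sign condition guaranteeing $R\cap\cQ_{<0}=\emptyset$. Assembling the recovered transcendental and algebraic data then determines $n$, every $g_i$ and $a_i$, and through (ii) the scalar $a_0$. The main obstacle throughout is exactly the non-tameness just described, and the heart of the proof is the verification that the no-rational-antipodal-points hypothesis leaves no room for the coincidences that would otherwise collapse distinct data.
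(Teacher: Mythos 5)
Your reduction of $\vphi(P)$ to the word
$$
\ts_{\exp(a_0)}\,\tp_{-1/g_1}\,\tt_{-a_1g_1}\,\tp_{g_1/g_2}\cdots\tp_{g_{n-1}/g_n}\,\tt_{-a_ng_n}\,\tp_{-g_n}
$$
is correct, and you are right that the paper's ``immediate consequence'' hides a tameness question created by the two boundary exponents $-1/g_1$ and $-g_n$, which can lie in $\cQ_{<0}$ even when $\monoid$ has no rational antipodal points. But your diagnosis and your remedy both fail at the same spot. On the diagnosis: when $\het(\vphi(P))\ge 1$ the normal form \emph{is} tame. The hypothesis forces every rational interior exponent $g_{i-1}/g_i$ to be positive, hence already in $\Omega$, so no involution is created there; the involutions coming from irrational interior exponents and from the two boundary exponents land only in the final M\"obius coefficient $\theta_k$ of a maximal algebraic subpath, or inside $a_0$ and $a_m$ --- and none of these slots is constrained, since affine type only restricts $\theta_0,\dots,\theta_{k-1}$ and the tameness condition only restricts subpaths lying in \emph{interior} segments. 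In that case the Main Theorem applies as a black box and your two-stage recovery (skeleton first, then islands) is superfluous; it is also internally inconsistent, since you invoke the Main Theorem's injectivity to extract the ``transcendental skeleton'' of a normal form you have just placed outside $\NF_\tame$, where that theorem asserts nothing (cf.\ Example~\ref{example-dihedralT2}, a non-tame normal form of positive height defining the identity).

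The genuinely problematic case is $\het=0$, i.e.\ all $g_i$ on a single rational ray with $-1/g_1,-g_n\notin\Omega$: there the whole word is one algebraic path whose coefficients $\theta_0=\ts_{\exp(a_0)}\ti$ and $\theta_{k-1}=\tt_{-a_ng_n}\ti$ are not affine, so it is not tame, and this is exactly the case your argument does not close. Your proposed tool --- the amalgamated-structure argument of Theorem~\ref{theorem-amalgamatedpoweraffine} under ``the sign condition guaranteeing $R\cap\cQ_{<0}=\emptyset$'' --- does not apply, because the exponent set of this island \emph{does} meet $\cQ_{<0}$ (it contains $-1/g_1$ and $-g_n$); and your assertion that Chebyshev-type relations are ``the only mechanism by which two distinct algebraic words can define the same germ'' is precisely the unproven classification of non-tame coincidences that the paper deliberately sidesteps. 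What closes the gap is the elementary identity $\texp_{a,g}=\ti\,\bigl(\tp_{1/g}\,\tt_{-ag}\,\tp_{g}\bigr)\,\ti$ (using $\tp_{-1}=\ti$ and $\tp_\alpha\tp_\beta=\tp_{\alpha\beta}$), which rewrites $\vphi(P)$ as $\ti\,\ts_{\exp(-a_0)}\,\tp_{1/g_1}\,\tt_{-a_1g_1}\cdots\tp_{g_n}\,\ti$; the inner word has affine coefficients and exponents avoiding $\cQ_{<0}\cup\{1\}$, so the proof of Theorem~\ref{theorem-amalgamatedpoweraffine} (ultimately Corollary~\ref{corollary-ofcohen}) recovers its data uniquely, and conjugation by the fixed germ $\ti$ is injective. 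With that observation supplied, your existence argument and the recovery of $(a_0;(a_i,g_i))$ go through.
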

\begin{Remark}
For sub-monoids $\monoid$ having antipodal points, it follows from Remark~\ref{remark-freepoints} that a normal form 
result as above would depend on precise characterization of the set of free points.  This seems to be a very difficult problem. As a hint, 
we refer to figure~\ref{fig4}, reproduced from \cite{MR2369190}.  It shows a numerically computed representation the set of free points in the plane $\cC_\lambda$, where $\lambda = a^2/2$.  

\begin{figure}[htbp]
\begin{center}
{ \includegraphics[height=4cm]{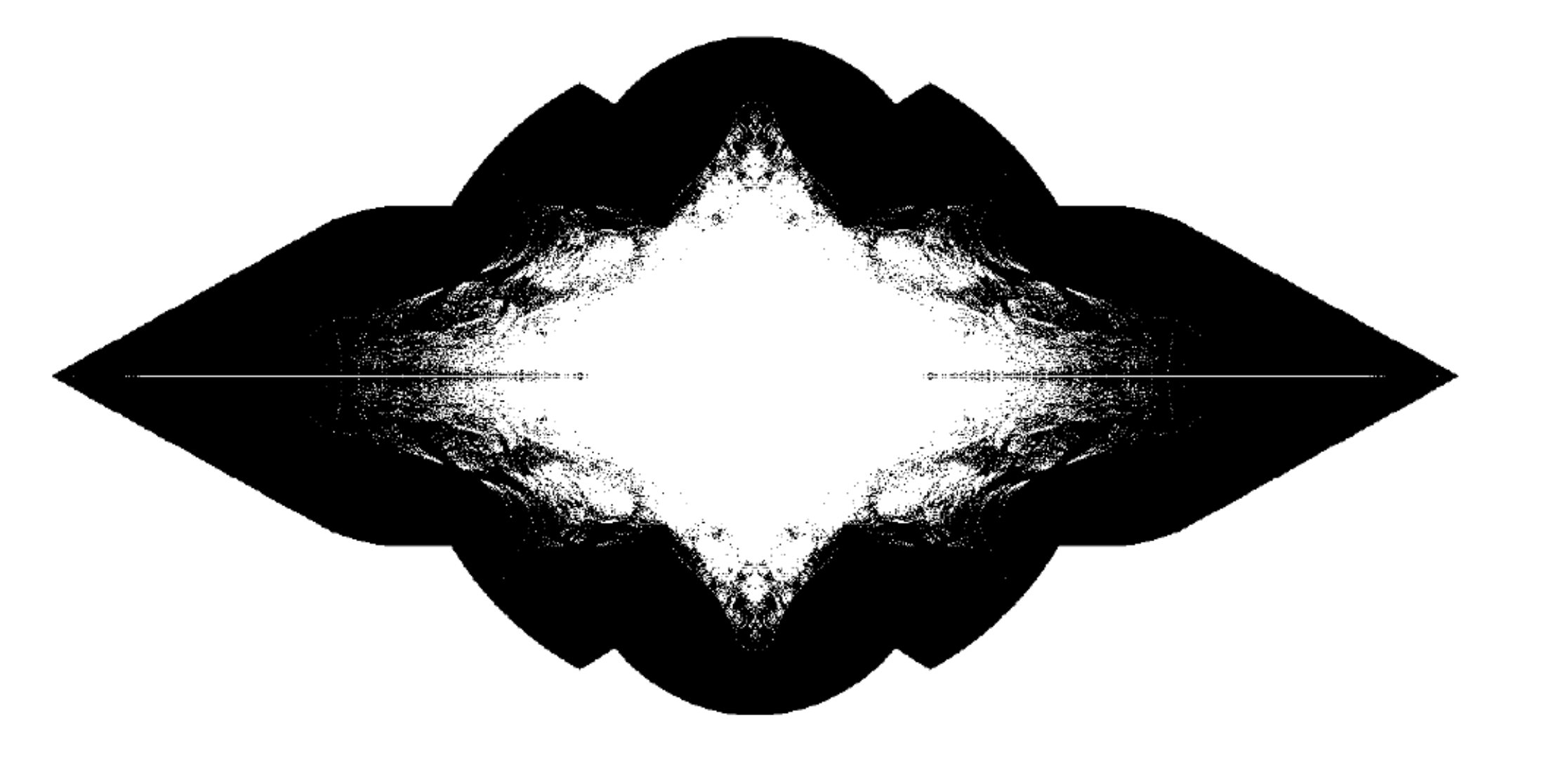}}
\caption{
The known free points are unshaded.}\label{fig4}
\end{center}
\end{figure}

\end{Remark}
\subsection{HNN-extensions in $\Homeo(\cR,+\infty)$}\label{subsect-HNN-ext}
Going in another direction, we can consider the analogous problem for the group $\Homeo(\cR,+\infty)$ of germs at $+\infty$ of 
real homeomorphisms defined in open intervals of the form $\{ x : x > x_0\}$ and which go to infinity as $x$ goes to infinity. 
The group operation being the composition.

Consider the following subgroups of $\Homeo(\cR,+\infty)$,
\begin{align*}
&T = \{ \tt_a \colon x \mapsto x + a\, , \, a \in \cR\}, \quad S^+ =  \{ \ts_\alpha \colon x \mapsto \alpha x\, ,\, \alpha \in \cR_{>0}\}\\
&\Exp = \{\te \colon x \mapsto \exp(x), \; \te^{-1}: x \mapsto \ln(x)\}, 
\end{align*}
where $\ln: \cR^* \rightarrow \cR$ is obviously taken as the real branch of the logarithm.  Let $\Aff^+ = T \rtimes S^+$ denote the subgroup of real positive affine maps.  
As it is well known, the conjugation by the exponential map defines an isomorphism 
\begin{align*}
\theta : T &\longrightarrow S^+ \\
 \tt_a &\longmapsto \te \tt_a \te^{-1} = \ts_{\exp(a)}
\end{align*} 
and we can consider the group $\Aff^+{\star_\theta}$ derived from $(T,\Aff^+,\theta)$ by HNN extension. We recall (see e.g.~\cite{MR1954121}, 1.4) that given a group $G$ with presentation $G = \langle F | R \rangle$ and an isomorphism $\theta: H \rightarrow K$ between two subgroups $H,K \subset G$, the {\em HNN extension derived from
$(H,G,\theta)$ }is a group $G{\star_\theta}$ with presentation
$$G{\star_\theta} = \big\langle F, \stable\  |\  R,\, \stable h \stable^{-1} = \theta(h), \forall h \in H \big \rangle$$
The new generator $\stable$ is called {\em stable letter}.

Consider now the subgroup $\gr_{\Aff^+,\Exp}$ of $\Homeo(\cR, +\infty)$ generated by $\Aff^+ \cup \Exp$.  From the universal property of the HNN extensions, we know that there is an unique morphism 
$$\phi: \Aff^+{\star_\theta} \longrightarrow \gr_{\Aff^+,\Exp}$$ 
which is the identity when restricted to $\Aff^+$ and which maps the stable letter to the exponential map.

We claim that $\gr_{\Aff^+,\Exp}$ contains no other relations besides the one expressing that $\exp$ conjugates $T$ to $S^+$.  In other words,
\begin{Theorem}\label{Theorem-realaffineexp}
$\phi: \Aff^+{\star_\theta} \rightarrow \gr_{\Aff^+,\Exp}$ is an isomorphism.
\end{Theorem}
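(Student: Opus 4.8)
The plan is to prove injectivity of $\phi$; surjectivity is immediate, since $\Aff^+$ and $\te$ generate $\gr_{\Aff^+,\Exp}$ and both lie in the image of $\phi$. Let $w \in \ker\phi$. Using the normal form (Britton's Lemma) for HNN extensions (\cite{MR1812024}, IV.2), I write $w$ in reduced form
$$
w = g_0\,\stable^{\epsilon_1}\,g_1\,\cdots\,\stable^{\epsilon_n}\,g_n, \qquad \epsilon_i \in \{+1,-1\},\ g_i \in \Aff^+,
$$
with no pinch $\stable\,\tt_a\,\stable^{-1}$ ($\tt_a \in T$) nor $\stable^{-1}\,\ts_\alpha\,\stable$ ($\ts_\alpha \in S^+$). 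Writing $g_i \colon x \mapsto \alpha_i x + a_i$ with $\alpha_i > 0$, reducedness says exactly that $\alpha_i \neq 1$ whenever $(\epsilon_i,\epsilon_{i+1}) = (+,-)$ and $a_i \neq 0$ whenever $(\epsilon_i,\epsilon_{i+1}) = (-,+)$. The goal is to force $n = 0$ and $g_0 = \id$.

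First I realize $\phi(w)$ inside $\Gr_{\PSL,\Exp}$. Choosing a real base point $p \gg 0$ (large enough, depending on $w$, that every intermediate value in the composition is a large positive real), all the maps act by their standard real branches; in particular the logarithm is the germ $\tl = \ln_0$. Taking germs at $p$ sends $w$ to the germ $\vphi\big(\Pi(w)\big)$, where $\Pi(w)$ is the path obtained from $w$ by the substitutions $\stable \mapsto \te$, $\stable^{-1} \mapsto \tl$ and $g_i \mapsto \tt_{a_i}\ts_{\alpha_i}$. Since $\phi(w) = \id$ as a germ at $+\infty$ and all maps involved are holomorphic, this complex germ is again the identity.

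Next I normalize $\Pi(w)$ to an element $N(w) \in \NF$ and check that it is tame. I use the germ identities $\te\,\tt_a = \ts_{e^a}\,\te$ and $\tl\,\ts_\alpha = \tt_{\ln_0\alpha}\,\tl$ (valid along the positive real axis) to push the $H_0$-part of each germ following a $\te$, and the $H_1$-part of each germ following a $\tl$, to the left. Because every $g_i$ is a positive affine map, the transversal representative after a $\te$ is the scaling $\ts_{\alpha_i}$ (legitimate since $\alpha_i > 0$ lies in $\Omega$), and after a $\tl$ it is the translation $\tt_{a_i/\alpha_i}$; in both cases the representative and the displaced factor remain positive affine and free of the involution $\ti$. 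Iterating, the whole reduction stays within the world of positive-affine germs and real (or rational) power maps, so no occurrence of $\ti$ is ever created. Consequently, in the resulting normal form $N(w)$ every maximal algebraic subpath has affine connecting maps, i.e.\ is of affine type, whence $N(w) \in \NF_\tame$. Moreover reducedness translates into admissibility of $N(w)$: the condition $\alpha_i \neq 1$ rules out the subpath $[\te,\id,\tl]$ and makes each grouped germ $\tp_{\alpha_i} = \te\,\ts_{\alpha_i}\,\tl$ nontrivial, while $a_i \neq 0$ rules out $[\tl,\id,\te]$, so condition (v) of Definition~\ref{def-nf} holds and $N(w)$ is a genuine normal form.

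Finally I conclude. If $n \geq 1$, then $N(w)$ contains at least one transcendental letter $\te$ or $\tl$, or at least one power germ $\tp_\alpha$, so $N(w) \neq [\id]$. But $\vphi\big(N(w)\big) = \id$, which by the injectivity of $\vphi$ on $\NF_\tame$ (the Main Theorem) forces $N(w) = [\id]$ — a contradiction. Hence $n = 0$, so $w = g_0 \in \Aff^+$ and $\phi(w) = g_0$ is the identity germ at $+\infty$; since a positive affine map fixes a neighborhood of $+\infty$ only if it is the identity, $g_0 = \id$ and $w = 1$. Therefore $\ker\phi$ is trivial and $\phi$ is an isomorphism. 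I expect the main obstacle to be the normalization step: one must track the transversal reductions carefully enough to be certain that positivity of the scalings keeps the entire computation inside the affine, involution-free regime, so that the output always lands in the tame locus where the Main Theorem supplies injectivity.
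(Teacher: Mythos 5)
Your proposal is correct and follows essentially the same route as the paper: both arguments take a Britton-reduced word in the kernel, choose a real base point far enough out that all branches are principal (the paper's ``maximal interval of existence''), convert the word into a tame normal form in $\NF_\tame$ whose algebraic subpaths stay positive-affine, and then invoke the injectivity of $\vphi$ on $\NF_\tame$ from the Main Theorem to force the word to be trivial. Your explicit verification that the transversal reductions never create the involution $\ti$, and that Britton reducedness translates into condition (v) of Definition~\ref{def-nf}, is exactly the content the paper leaves implicit in its definition of the map $\rho_1\colon \BNF \to \NF_\tame$.
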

\begin{Remark}
Based on the above result, we can obtain a quite economic presentation for the group $\gr_{\Aff^+,\Exp}$, namely
$$
\gr_{\Aff^+,\Exp} = \big\langle \cR, \stable\ \big | \ (\stable a \stable^{-1}) b (\stable a \stable^{-1})^{-1} = \exp(a) b,  \, \forall a,b \in \cR\big\rangle
$$
where $\cR$ is equipped with its usual additive group structure.  For instance, the multiplicative structure of $S^+$ is easily obtained by 
defining $\ts_{\exp(a)} := \stable a \stable^{-1}$.
\end{Remark}
As another consequence, we obtain a large collection of (apparently new) free subgroups inside $\Homeo(\cR, +\infty)$.  Indeed, consider the family of subgroups $\{T_n\}_{n \in \cZ^*} \subset \Homeo(\cR,+\infty)$ given by 
\begin{align*}
T_0 &= T,\quad & T_n &= \theta^n(T_0) = \te^n T_0 \te^{-n}, \qquad \forall n \in \cZ^*
\end{align*}
where, for $n > 0$ (resp.\ $n < 0$),  $\te^n$ denotes the $n$-fold composition of $\te$ (resp.  $\te^{-1}$).  Notice that $S^+ = T_1$.  We define 
$$
A_n = \theta^n(\Aff^+) = T_n \rtimes T_{n+1}, \quad \forall n \in \cZ
$$
where $T_{n+1}$ acts on $T_n$ by conjugation (exactly as $S^+$ acts on $T$).
\begin{Corollary}
The subgroup of $ \gr_{\Aff^+,\Exp}$ generated by $\bigcup_{n \in \cZ} A_n$ is isomorphic to the infinite free amalgamated product given by the following diagram
\begin{center}
\begin{tikzpicture}[description/.style={fill=white,inner sep=2pt}]
\matrix (m) [matrix of math nodes, row sep=2em,
column sep=0.8em, text height=1.5ex, text depth=0.25ex]
{  \cdots  &           & \Aff^+ &         & \Aff^+ &       & \Aff^+ &        & \cdots  \\
               & T       &            & T      &           & T    &           & T     &                    \\ };
\path[right hook->,font=\scriptsize]
(m-2-2) edge node[below right] {$ \mathrm{id} $} (m-1-3)
(m-2-4) edge node[below right] {$ \mathrm{id} $} (m-1-5)
(m-2-6) edge node[below right] {$ \mathrm{id} $} (m-1-7)
(m-2-8) edge node[below right] {$ \mathrm{id} $} (m-1-9);
\path[left hook->,font=\scriptsize]
(m-2-2) edge node[below left]  {$ \theta $} (m-1-1)
(m-2-4) edge node[below left] {$ \theta$} (m-1-3)
(m-2-6) edge node[below left] {$ \theta $} (m-1-5)
(m-2-8) edge node[below left] {$ \theta $} (m-1-7);
\end{tikzpicture}
\end{center}
where the north-east and north-west arrows are respectively the identity inclusions and the monomorphism $S^+ = \theta(T)$.
\end{Corollary}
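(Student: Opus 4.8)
The plan is to derive the Corollary from Theorem~\ref{Theorem-realaffineexp} by analysing the action of the ambient group on its Bass--Serre tree. By that theorem I identify $\gr_{\Aff^+,\Exp}$ with the HNN extension $\widehat G := \Aff^+_{*\theta}$, the exponential $\te$ playing the role of the stable letter $\stable$; under this identification $A_n = \stable^{n}\,\Aff^+\,\stable^{-n}$ and $T_n = \stable^{n}\,T\,\stable^{-n}$, and the defining relation $\stable\,\tt_a\,\stable^{-1} = \ts_{\exp(a)}$ reads $\stable\,T\,\stable^{-1} = S^+$, i.e.\ $T_1 = S^+$. The backbone of the argument is the \emph{exponent-sum} homomorphism
$$
\nu : \widehat G \longrightarrow \cZ, \qquad \nu|_{\Aff^+} \equiv 0, \quad \nu(\stable) = 1 ,
$$
which is well defined precisely because the HNN relation is $\nu$-homogeneous of degree $0$.

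First I would show that $H := \langle\, \bigcup_{n\in\cZ} A_n \,\rangle$ equals $\ker\nu$. Each generator $\stable^{n} a\, \stable^{-n}$ ($a\in\Aff^+$) has $\nu$-value $0$, so $H\subseteq\ker\nu$. Conversely $H$ is normal in $\widehat G$: it is stable under conjugation by $\Aff^+ = A_0\subseteq H$, and $\stable A_n \stable^{-1} = A_{n+1}$ gives $\stable H\stable^{-1} = H$, so $H$ is normalized by the generating set $\Aff^+\cup\{\stable\}$. As $\Aff^+\subseteq H$, the quotient $\widehat G/H$ is cyclic, generated by the image of $\stable$; since $\nu$ factors through it and sends that image to a generator of $\cZ$, the quotient is infinite cyclic and $H=\ker\nu$.

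Next I would read off the splitting of $H$ from the action of $\widehat G$ on its Bass--Serre tree $\mathcal T$, whose vertices are the cosets of $\Aff^+$ and whose edges are the cosets of $T$; here $\widehat G\backslash\mathcal T$ is the loop of groups with single vertex group $\Aff^+$, single edge group $T$, and the two edge-inclusions $T\hookrightarrow\Aff^+$ (identity) and $T\xrightarrow{\theta}S^+\hookrightarrow\Aff^+$. Because $\nu(\stable)=1\ne 0$, the element $\stable$ is hyperbolic of translation length one; its axis $L$ has vertices $v_n := \stable^{n}\,\Aff^+$ and edges $\hat e_n := [v_n,v_{n+1}]$, with
$$
\operatorname{Stab}(v_n) = \stable^{n}\,\Aff^+\,\stable^{-n} = A_n, \qquad \operatorname{Stab}(\hat e_n) = \stable^{n}\,S^+\,\stable^{-n} = T_{n+1} .
$$
The inclusion $T_{n+1}\hookrightarrow A_n$ is the $\stable^{n}$-conjugate of $S^+=\theta(T)\hookrightarrow\Aff^+$, while $T_{n+1}\hookrightarrow A_{n+1}$ is the $\stable^{n+1}$-conjugate of the identity inclusion $T\hookrightarrow\Aff^+$; these are exactly the $\theta$- and the identity-inclusions appearing on the left and right of the Corollary's diagram. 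Now the grading $\nu$ makes the axis $L$ a strict fundamental domain for $H$ acting on $\mathcal T$: for a vertex $g\Aff^+$ put $n=\nu(g)$; then $\nu(g\stable^{-n})=0$, so $g\stable^{-n}\in\ker\nu=H$ and $g\Aff^+ = (g\stable^{-n})\,v_n$ lies in the $H$-orbit of $v_n$, while $h\,v_n = v_m$ with $h\in H$ forces $\nu(h)=m-n=0$, i.e.\ $m=n$; the same computation applies verbatim to edges. Thus $L\to H\backslash\mathcal T$ is an isomorphism of graphs, and the Bass--Serre structure theorem presents $H$ as the fundamental group of the line of groups on $L$, with vertex groups $\operatorname{Stab}_H(v_n)=A_n\cong\Aff^+$ and edge groups $\operatorname{Stab}_H(\hat e_n)=T_{n+1}\cong T$ --- which is precisely the infinite amalgamated free product of the diagram. (Conceptually, $H=\ker\nu$ is the $\cZ$-cover of the loop $\widehat G\backslash\mathcal T$, whence the bi-infinite line.)

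The only genuinely delicate points are bookkeeping ones: checking that $\stable$ has translation length exactly one and computing the edge stabilizers $\operatorname{Stab}(\hat e_n)=T_{n+1}$ (so that consecutive vertex groups $A_n$, $A_{n+1}$ are amalgamated over a copy of $T$), and matching the two edge-inclusions to the $\theta$/identity arrows of the diagram with the correct left/right orientation. The conceptual crux --- that $H$ does not collapse, i.e.\ that the $v_n$ and $\hat e_n$ occupy distinct $H$-orbits --- is handled cleanly by the grading $\nu$, which is where the hypothesis $\stable\notin H$ enters. Should a self-contained proof avoiding Bass--Serre machinery be preferred, the same separation can instead be extracted from Britton's lemma for $\widehat G$ together with the normal-form theorem for fundamental groups of trees of groups (cf.\ \cite{MR1812024}, IV.2), but the tree-theoretic route above is shorter and makes the amalgamated structure transparent.
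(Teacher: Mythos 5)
Your argument is correct, and it is precisely the standard derivation that the paper leaves implicit: it states the Corollary without proof as a consequence of Theorem~\ref{Theorem-realaffineexp}, the intended route being exactly your identification of $\langle\bigcup_{n}A_n\rangle$ with the kernel of the exponent-sum homomorphism on $\Aff^+_{*\theta}$ and the Bass--Serre splitting of that kernel along the axis of the stable letter. Your index bookkeeping ($\operatorname{Stab}(\hat e_n)=T_{n+1}$, included into $A_{n+1}$ by the identity and into $A_n$ by the conjugate of $\theta$) matches the orientation of the arrows in the diagram, so nothing is missing.
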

In \cite{MR1184319}, Glass attributes to Higman the following question: 
\begin{center}
{\em Do $T$ and $\Pow^+ = \{x \mapsto x^r :  r \in \cR_{>0}\}$ generate their free product?}
\end{center}
The above Corollary allows us to answer this question affirmatively.  Indeed, as $A_1 = S^+ \rtimes \Pow^+$,
the above diagram shows that the subgroup $G_{\Aff^+,\Pow^+}$ of $\Homeo(\cR,+\infty)$ generated by $T\cup S^+ \cup \Pow^+$ has the presentation
$$
G _{\Aff^+,\Pow^+}= \left(T \rtimes S^+\right) \; \underset{S^+}\star\;  \left( S^+ \rtimes \Pow^+ \right)
$$
where the amalgam is obviously made over $S^+$. 
\subsection{Transseries and a finitary version of {\em Lemme 1}}\label{subsect-finitarylemme1}
We follow the notation from \cite{0909.1259}.  Let $\transs = \cR \ltriple x \rtriple$ be the real ordered field of well-based transseries and $\gtranss \subset \transs$ be the subset large positive transseries.  Then, $\gtranss$ is a group under the composition operation and there is a injective homomorphism 
$$T:\gr_{\Aff^+,\Exp} \rightarrow \gtranss$$ 
which associates to each element $g \in \gr_{\Aff^+,\Exp}$ its transseries at infinity. Indeed, each germ in $\gr_{\Aff^+,\Exp}$ defines  element in the Hardy field $H(\cR_{an,exp})$ (see e.g.\cite{MR1848569}), and therefore this homomorphism is a direct consequence of the embedding of 
$H(\cR_{an,exp})$ into $\transs$ (see \cite{MR1848569}, Corollary 3.12).

In this subsection, we shall be concerned with the following property (see e.g. \cite{MR1399559}, \cite{MR1133882}):
\begin{Definition}
Given an element $\phi \in \gtranss$ and a subgroup $ H \subset \gtranss$, we shall say that $H$ and $\phi$ are {\em immiscible} if the subgroup generated by 
$H \cup \phi H \phi^{-1}$ is isomorphic to the free product  $H \star H$.
\end{Definition}
For each integer $k \ge 1$, let $G_k \subset \gtranss$ denote the subgroup   real formal series at $+\infty$ which are tangent to identity to order $k$, i.e.~the group of transseries of the form
$$
x \mapsto x + \sum_{j \ge k-1} b_j x^{-j},\quad \text{ with }b_j \in \cR
$$
The following problem is stated in \cite{MR889742} (see also \cite{MR1399559}, section 1.4): 

\noindent{\bf Immiscibility problem: }{\em 
Prove that $G_2$ and $\phi$ are immiscible in the following cases:
$$
\phi : x \mapsto \exp(x), \quad \phi :  x \mapsto x + \ln(x),  \quad\text{or}\quad \phi :  x \mapsto x^\lambda, 
$$
where $\lambda \in \cR_{>0} \setminus \cQ_{>0}$.}
\begin{Remarks}
(1) The immiscibility  problem naturally appears in the study of the Poincar\' e first return map in the vicinity of an elementary polycycle.   Such study is an essential ingredient in the proofs of the Finiteness Theorem of limit cycles for polynomial vector fields in the plane (see  in \cite{MR1399559} and \cite{MR1133882}).   According to the strategy sketched in  \cite{MR889742} and 
\cite{MR888240}, one expects that a positive answer to the immiscibility problem would allow to significantly simplify these proofs.\\
(2) 
The immiscibility problem has an obvious negative answer if  $G_2$ is replaced by $G_1$.  Indeed, given an arbitrary non-identity element
$f \in G_1$ and an scalar 
$a \in \cR^*$, consider the series
$$
g = \ts_{\exp(a)} f \ts_{\exp(-a)}
$$
which is also an element of $G_1$.  Then, using the identity $\te \tt_a = \ts_{\exp(a)} \te$ one can rewrite 
$$
g = \te \tt_a \te^{-1} f \te \tt_{-a} \te^{-1}
$$
Since the translation $\tt_a$ is an element of $G_1$, one obtains the following relation in the subgroup generated by $G_1 \cup \te G_1 \te^{-1}$:
 $$
\te \tt_a \te^{-1} f \te \tt_{-a} \te^{-1}  =  \ts_{\exp(a)} f \ts_{\exp(-a)},
 $$
which shows that this subgroup is not isomorphic to the free product $G_1 \star G_1$.
 \end{Remarks}

Notice that an element $f \in G_k$ can be written as the limit of a (Krull convergent) sequence $\{f_n\}_{n \ge k} \subset \gtranss$ given by
$$
f_0  = \id, \quad f_n = T\big( \fexp(a_n w_{-n})\big)\, f_{n-1}, \quad \text{ where }w_{-n} = x^{-n} \left( x \frac{\partial}{\partial x} \right)
$$
with constants $a_n \in \cR$ uniquely determined by $f$ and the flow maps  $\fexp(a_n w_{-n})$ being given by Example~\ref{example-witt}.
 
This motivates us to consider the subgroup $G_{k,\fin} \subset G_k$ of those elements $f$ which can be expressed as {\em finite words}, namely
$$
f = T\Big( \fexp(a_1 w_{-k_1}) \cdots  \fexp(a_n w_{-k_n}) \Big)
$$
for some $n \ge 0$, $a_i \in \cR$ and $k_i \in \cZ_{\ge k}$.  Notice that each $G_{k,\fin}$ is indeed defined by an analytic function in a neighborhood of infinity and lies in the image of the morphism $T$ considered above. It also lies in the Hardy field $H(\cR_{\an,\exp})$ (cf. \cite{MR1848569}, section 3.11).

In order to formulate our next result, let $\Paths$ be the subset of all non-identity elements $g \in \gr_{\Aff^+,\Exp}$ of the form
$$
g = g_1 \cdots g_n
$$
for some $n \ge 1$ and $g_i \in \{\te,\tl\} \cup \{\tp_r : r \in \cR_{>0} \setminus \cQ_{>0}\}$.  As a consequence of the previous Theorem and the Normal form Theorem, we obtain the following {\em finitary}  immiscibility property:
\begin{Theorem}\label{theorem-lemme1}
Let $g$ be an arbitrary element in  $\Paths$.  Then, $G_{2,\fin}$ and $\phi = T(g)$ are immiscible.
\end{Theorem}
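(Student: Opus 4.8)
The plan is to push the whole question into $\gr_{\Aff^+,\Exp}$ by means of the injective homomorphism $T$, and there to combine the HNN structure of Theorem~\ref{Theorem-realaffineexp} with the normal forms of the Main Theorem. Since $G_{2,\fin}$ lies in the image of $T$ and $\phi=T(g)$, the subgroup of $\gtranss$ generated by $G_{2,\fin}\cup\phi\,G_{2,\fin}\,\phi^{-1}$ is carried isomorphically by $T^{-1}$ onto the subgroup of $\gr_{\Aff^+,\Exp}$ generated by $\widetilde G\cup g\,\widetilde G\,g^{-1}$, where $\widetilde G:=T^{-1}(G_{2,\fin})$ and $g\in\Paths$. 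Thus immiscibility is equivalent to the assertion that $\langle\widetilde G,\ g\,\widetilde G\,g^{-1}\rangle$ is the free product $\widetilde G\star\widetilde G$, i.e.\ that no nontrivial reduced alternating word $u_1(g k_1 g^{-1})u_2(g k_2 g^{-1})\cdots$ with $u_i,k_i\in\widetilde G\setminus\{\id\}$ defines the identity germ; this is what I would prove.

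The first step is to pin down $\widetilde G$. By Example~\ref{example-witt} its generators $\fexp(a\,w_{-k})$ with $k\ge 2$ are the algebraic germs $x\mapsto x(1-ak\,x^{-k})^{1/k}=x+O(x^{-1})$, fixing $+\infty$ and tangent to the identity to order $\ge 2$. Applying Theorem~\ref{theorem-NFinmonoid} to the sub-monoid $\monoid=\langle\{-k:k\ge 2\}\rangle=\{0\}\cup\cZ_{\le-2}$, which has no rational antipodal points, each element of $\widetilde G$ has a unique normal form of height zero with trivial scaling part $\texp_{a_0,0}=\id$. The decisive consequence is that $\widetilde G\cap\Aff^+=\{\id\}$: a germ that is simultaneously affine and tangent to the identity is the identity, so no nontrivial element of $\widetilde G$ is a translation or a scaling.

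The combinatorial heart is a Britton-type analysis in the HNN extension $\Aff^+_{*\theta}$ with stable letter $\te$. The governing principle is that a subword $\te\,c\,\te^{-1}$ (resp.\ $\te^{-1}c\,\te$) reduces only when $c$ is a translation (resp.\ a scaling); since the elements of $\widetilde G$ meeting such a junction are tangent to the identity, they are neither, so these innermost junctions never immediately pinch. When the exponents occurring in $g$ are irrational---for instance $g=\te$ or $g=\tp_\lambda$ with $\lambda\notin\cQ$---this already forces the free product: conjugation replaces the generators of $\widetilde G$ by germs carrying \emph{irrational} powers $\tp_{\lambda/2},\tp_{2/\lambda},\dots$, which can never recombine to the trivial power, so the stable letters contributed by the various copies of $g^{\pm1}$ survive Britton reduction and the algebro-transcendental height of the whole word is additive over the blocks and strictly positive; in particular the word is not the identity.

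The main obstacle is the rational-power content that a general $g\in\Paths$ may acquire---already $\tp_{\sqrt2}\,\tp_{\sqrt2}=\tp_2$ shows that products of irrational powers can be rational, so $g$ can carry a genuine power map $\tp_2$ or even reduce to an algebraic germ. Such exponents \emph{can} cancel to $1$: conjugating a tangent-to-identity germ by $\tp_2$ turns it into a true translation (for example $\tp_2\,\fexp(a\,w_{-2})\,\tp_{1/2}=\tt_{-2a}$), which then pinches against an outer $\te$ and collapses an entire block $g k g^{-1}$ to a scaling, so the stable-letter count is no longer protected and the height argument breaks. I would handle this through the \emph{exact} uniqueness of normal forms rather than asymptotics: all exponents in play lie in $\cQ_{>0}$, so Theorem~\ref{theorem-amalgamatedpoweraffine} applies (its hypothesis $R\cap\cQ_{<0}=\emptyset$ holds) and provides unique alternating power--affine normal forms; one shows that after each collapse the surviving affine map recombines with the powers of the neighbouring blocks into a nontrivial alternating word, so no total cancellation can occur. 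Setting up the induction on the number of blocks so that every such rational-power collapse is reabsorbed, and invoking the injectivity of $\vphi$ on $\NF_\tame$ to certify that the words produced are genuine (tame) normal forms, is the delicate part that I expect to be the crux.
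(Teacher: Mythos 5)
Your overall strategy is the one the paper itself uses: push the statement through the injective homomorphism $T$ into $\gr_{\Aff^+,\Exp}$, identify that group with the HNN extension $\Aff^+_{*\theta}$ via Theorem~\ref{Theorem-realaffineexp}, and control cancellation by Britton normal forms certified by the injectivity of $\vphi$ on $\NF_\tame$. (The paper dispatches Theorem~\ref{theorem-lemme1} in a single sentence as an ``immediate consequence'' of Theorem~\ref{Theorem-realaffineexp}, so your observation that a block $g\,k\,g^{-1}$ can pinch down to an affine map when $g$ carries rational power content is a real point that the paper does not discuss.) Your preliminary steps --- $\widetilde G\cap\Aff^+=\{\id\}$, the reduction to nontriviality of reduced alternating words --- are correct and consistent with the paper's route.

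The problem is that your argument stops exactly where the difficulty begins: the final paragraph describes an induction you \emph{hope} will reabsorb each rational-power collapse, and you concede it is ``the crux.'' This gap is not merely expository, because the collapse you identified can be total, and for the set $\Paths$ as literally defined the desired conclusion can actually fail. Take $g=\tl\,\tp_{\sqrt 2}\,\te$: each letter is an allowed generator and the composite germ is the nonidentity scaling $\ts_{\sqrt 2}$, so $g\in\Paths$. But conjugation by a scaling preserves $G_{2,\fin}$ setwise, since $\ts_\alpha\,\fexp(a\,w_{-k})\,\ts_\alpha^{-1}=\fexp(a\alpha^{k}\,w_{-k})$; hence $\bigl(g\,\fexp(a\,w_{-2})\,g^{-1}\bigr)\cdot\fexp(-2a\,w_{-2})$ is a nontrivial reduced alternating word that defines the identity germ, and $\langle G_{2,\fin}\cup g\,G_{2,\fin}\,g^{-1}\rangle$ is just $G_{2,\fin}$, not a free product. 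So no induction on blocks can close your argument without first excluding (or separately treating) those $g\in\Paths$ whose germ is affine or a rational power --- compare the hypothesis $\lambda\in\cR_{>0}\setminus\cQ_{>0}$ in the original immiscibility problem, which is evidently the intended regime. Concretely: you must begin by characterizing which products $g_1\cdots g_n$ of the allowed letters reduce, via the relations $\tl\,\te=\id$, $\te\,\tl=\id$ and $\tp_r\tp_s=\tp_{rs}$, to germs normalizing $G_{2,\fin}$ or lying in $\Aff^+$, and restrict the claim accordingly; only then can the Britton/height bookkeeping you sketch (whose additivity over blocks also needs proof, not just assertion) be carried out.
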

\begin{Remarks}
(1) Notice that the result includes the case where $g$ is given by towers of exponentials and powers, such as
$$
g : x \rightarrow e^{e^{x^r}}, \quad r \in \cR_{>0} \setminus \cQ_{>0}
$$
However, it does not include the so-called {\em inverse log-Lambert map}, 
$$L: x \mapsto x + \ln(x)$$  
which is a solution of the differential equation 
$$\Big( \frac{x}{1 + x}\, \frac{d}{d x} \Big) L  = 1.$$ 
The map $L$ plays an important role in proof of the finiteness of limit cycles. Indeed, it constitutes one of the {\em building blocks} in the construction of the Dulac transition map near a hyperbolic saddle or a saddle-node. We believe that it is possible to adapt our proof to include this function in the statement of the above Theorem. \\
(2) The passage from $G_{2,\fin}$ to $G_2$ in the immiscibility problem seems to be outside the reach of the tools developed in this paper.  A possible strategy of proof could consist in appropriately identifying $G_2$ to some subset of {\em ends} in the Bass-Serre tree defined by the HNN-extension 
$\Aff^+{\star_\theta}$.
\end{Remarks}
\subsection{Acknowledgements}  
I would like to thank Dominique Cerveau, Robert Roussarie, Robert Moussu, Jean-Jacques Risler, Jean-Philippe Rolin, Jean-Marie Lion, Bernard Teissier, Norbert A'Campo, Frank Loray, \' Etienne Ghys  and Thomas Delzant for numerous enlightening discussions.
\section{Formal Theory in $\Gr_{\PSL,\Exp}$}\label{section-formaltheory}
In this section, we will start our proof of the Main Theorem.  As a first step, we recall some basic universal constructions in groupoid theory, following closely \cite{MR2273730}.
\subsection{Free product and quotient of groupoids}\label{subsect-freequotgroupoid}
Let $\Gr$ and $\HGr$ be groupoids, and let $j_1: \Gr \rightarrow K$, $j_2: \HGr \rightarrow K$ be morphisms of groupoids.  We say that these morphisms present $K$ as the {\em free product} of $\Gr$ and $\HGr$ if the following universal property is satisfied:  if $g : \Gr \rightarrow \LGr$ and $h: \HGr \rightarrow \LGr$ are morphisms of groupoids which agree on $\Obj(\Gr) \cap \Obj(\HGr)$ then there is a unique morphism $k: K \rightarrow \LGr$ such that $kj_1 = g$, $kj_2 = h$.  Such free product always exists (see \cite{MR2273730}, section 8.1) and will be noted $\Gr\, *\, \HGr$.

If the groupoids $\Gr$ and $\HGr$ have no common morphism except the identity, the elements of $\Gr\, *\, \HGr$ are can be identified with the set of paths 
$$
[g_1, g_2, \ldots, g_n]
$$
which are either equal to $[\id]$ or where each $g_i$ belongs to either $\Gr$ or $\HGr$, no $g_i$ is the identity, and $g_i$, $g_{i+1}$ do not belong to the same groupoid.

We now recall the construction of the quotient of a groupoid by a set of relations.  In a groupoid $\Gr$, suppose given, for each object $p$, a set $R(p)$ of elements of $\Gr(p)$ (the vertex group at $p$).  The disjoint union $R$ of the $R(p)$ is called a set of {\em relations} in $\Gr$.  We define the {\em normal closure}
$\NGr = \NGr(R)$ of $R$ as the following subgroupoid:  Given an object $x \in \Obj(\Gr)$, a {\em consequence} of $R$ at $x$ is either the identity at $x$ or any morphism of the form
$$
a_n^{-1} \rho_n a_n \cdots a_1^{-1} \rho_1 a_1
$$
for which $a_i \in \Gr(x,x_i)$ and $\rho_i$, or $\rho_i^{-1}$, is an element of $R(x_i)$.  The set of all consequences at a point $x$, which we note $\NGr(x)$, is a subgroup of $\Gr(x)$ and the disjoint union $\NGr$ of all $\NGr(x)$ has the structure of a totally disconnected normal subgroupoid of $G$ (see \cite{MR2273730}, section 8.3), where by {\em totally disconnected groupoid} we mean a groupoid where each morphism have its source equal to its target.   It can be shown that $\NGr$ is the smallest wide normal groupoid of $\Gr$ which contains $R$. 

Let $\Gr/\NGr(R)$ be the quotient groupoid (see \cite{MR2273730}, Theorem 8.3.1).  The projection $\pi : \Gr \rightarrow \Gr/\NGr(R)$ has the following universal property: for each morphism of groupoids $f : \Gr \rightarrow \HGr$ which annihilates $R$, there exists a unique morphism 
$f^\prime : \Gr/\NGr(R) \rightarrow \HGr$ such that $f = f^\prime \pi$.
\subsection{Product normal form in $\Gr_{\PSL}\, *\, \PGr_{\Exp}$}\label{subsect-productnf}
The essence of our Normal Form Theorem is to present $\Gr_{\PSL,\Exp}$ as the quotient of a free product of groupoids by some explicit set of relations.  
For this, we consider the groupoids
$$
\Gr_{\PSL} = \Germ(\PSL), \quad\ \Gr_{\Exp} = \Germ(\{\exp\})
$$
and let $\PGr_{\Exp}$ denote the free groupoid on the graph of $\Gr_{\Exp}$, i.e.~the groupoid defined by the set of reduced paths on $\Gr_{\Exp}$ (see subsection~\ref{subsect-intronormalforms}). Let  $\Free = \Gr_{\PSL}\, *\, \PGr_{\Exp}$ be the free product of these groupoids.   

A first necessary step to obtain a normal form in $\vF$ is to describe the normal forms in $\Gr_{\PSL}$ and $\PGr_{\Exp}$.  We need two preparatory Lemmas:
\begin{Lemma}\label{lemma-nfPSL}
Each element $g \in \PSL$ can be written as one of the following expressions
$$
g = \ts_\alpha \tt_a \ti \tt_b \qquad \mbox{ or }\quad g = \ts_{\alpha} \tt_b
$$
for some uniquely determined constants $\alpha \in \cC^*$ and $a,b \in \cC$.  Moreover, if we consider the region $\Omega \subset \cC$ given by
$$\Omega =  \{\alpha : \Re(\alpha) > 0\} \cup \{\alpha : \Re(\alpha) = 0, \Im(\alpha) > 0\},$$
(see Figure~\ref{regionOmega}, at the Introduction), the following holds:
\begin{itemize}
\item[(i)] Each right coset of $H_0 = T \rtimes \{\ts_{-1}\}$ in $\PSL$  contains an unique element of the form
$$
g = \ts_{\rho} \ti \tt_b, \qquad \mbox{ or }\quad g = \ts_{\rho}
$$
for some constants $b \in \cC$ and $\rho \in \Omega$.
\item[(ii)] Each right coset of $H_1 = S \rtimes \{\ti\}$ in $\PSL$ contains an unique element of the form
$$
g = \tt_a \ti \tt_b, \qquad g = \tt_c \ti, \qquad \mbox{ or }\quad g = \tt_{b}
$$
for some  constants $c \in \cC \setminus \{0\}$, $b \in \cC$ and $a \in \Omega$ such that $b \ne -1/a$.
\end{itemize}
\end{Lemma}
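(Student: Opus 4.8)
The plan is to work throughout inside the Möbius action of $\PSL$ on $\cPC$, writing a general element as $z \mapsto \frac{Az+B}{Cz+D}$ with $AD-BC\ne 0$ and reading off all the normal forms from the coefficients. For the first assertion I would use the Bruhat-type dichotomy for $\PSL$: a Möbius map either fixes $\infty$, in which case it is affine and equals $\ts_\alpha\tt_b$ (indeed $\ts_\alpha\tt_b: z\mapsto \alpha z + \alpha b$ exhausts $\Aff = T\rtimes S$ as $\alpha\in\cC^*$, $b\in\cC$ vary), or else $C\ne 0$. In the latter case one checks the decomposition $\PSL = \Aff\,\sqcup\,\Aff\,\ti\,\Aff$ and absorbs the scaling to the right of $\ti$ using the commutation rules $\ti\,\ts_\gamma = \ts_{1/\gamma}\,\ti$ and $\ts_\gamma\,\tt_a = \tt_{\gamma a}\,\ts_\gamma$, which reduce a general element of the big cell to the shape $\ts_\alpha\tt_a\ti\tt_b$. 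For uniqueness I would normalize $C=1$ and compare coefficients: the computation $\ts_\alpha\tt_a\ti\tt_b : z\mapsto \frac{\alpha a\,z + \alpha(1+ab)}{z+b}$ shows that the pole gives $b$, then $\alpha = B-AD \ne 0$ and $a = A/\alpha$ are forced, so $\alpha\in\cC^*$ and $a,b\in\cC$ are uniquely determined.

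For part (i) I would observe that $H_0 = T\rtimes\{\ts_{-1}\}$ is exactly the group of maps $z\mapsto \pm z + c$, so the right coset $H_0 g$ consists of all post-compositions $z\mapsto \pm g(z)+c$. Starting from the normal form, I clear the affine part preceding $\ti$: writing $\ts_\alpha\tt_a\ti\tt_b = \tt_{\alpha a}\ts_\alpha\ti\tt_b$ and left-multiplying by $\tt_{-\alpha a}\in H_0$ yields $\ts_\alpha\ti\tt_b$; a further factor $\ts_{-1}\in H_0$, if needed, replaces $\alpha$ by $-\alpha$, and since $\Omega$ (Figure~\ref{regionOmega}) is a fundamental domain for $\alpha\mapsto-\alpha$ on $\cC^*$ containing $1$, exactly one sign lands in $\Omega$, giving $\ts_\rho\ti\tt_b$ with $\rho\in\Omega$. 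The affine case is identical with $\ti$ absent, giving $\ts_\rho$. Uniqueness follows because post-composition by $H_0$ fixes $\infty$, so whether $g$ fixes $\infty$ — hence which of the two shapes occurs — is a coset invariant; within each shape the pole $-b$ and then $\rho$ are recovered as above, the fundamental-domain condition removing the residual sign.

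For part (ii) I would switch to a geometric invariant. Here $H_1 = S\rtimes\{\ti\}$ is precisely the setwise stabilizer of $\{0,\infty\}$ in $\PSL$ (the maps $z\mapsto\alpha z$ and $z\mapsto\alpha/z$), so the unordered pair $\{g^{-1}(0),g^{-1}(\infty)\}\subset\cPC$ is constant on $H_1 g$; conversely, if two maps have the same pair then their quotient stabilizes $\{0,\infty\}$ setwise and so lies in $H_1$, whence the pair is a complete invariant of the coset. I then classify cosets by how this pair meets $\{0,\infty\}$: if it contains $\infty$ the coset has the affine representative $\tt_b$ (pair $\{\infty,-b\}$); if it contains $0$ but not $\infty$ it is represented by $\tt_c\ti$ with $c\in\cC^*$ (pair $\{0,-1/c\}$); and if it avoids $\{0,\infty\}$ it is represented by $\tt_a\ti\tt_b$ (pair $\{-b,\,-b-1/a\}$), the conditions $b\ne 0$ and $b\ne -1/a$ encoding exactly that neither point of the pair is $0$. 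A short computation shows the pair determines $a$ only up to sign (reversing the roles of the two points sends $a\mapsto -a$ and adjusts $b$), and requiring $a\in\Omega$ selects the representative uniquely; this agrees with the transversal $\trans_1$ of Remark~\ref{remark-defNF}, whose middle family carries the sharper condition $b\in\cC^*\setminus\{-1/a\}$.

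The commutation identities and coefficient comparisons are routine; the one place demanding care is the bookkeeping in (ii), namely verifying that the three families are genuinely disjoint — in particular that $\tt_a\ti\tt_b$ must exclude $b=0$ so as not to coincide with the $\tt_c\ti$ family — together with checking, in both (i) and (ii), that the region $\Omega$ correctly resolves the sign ambiguity introduced by the order-two elements $\ts_{-1}$ and $\ti$.
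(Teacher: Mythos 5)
Your proof is correct, and part of it takes a genuinely different route from the paper's. The paper derives the first assertion from the standard presentation of $\PSL$ and then proves (i) and (ii) by a purely symbolic orbit computation: it left-multiplies the normal forms $\ts_\alpha\tt_b$ and $\ts_\alpha\tt_a\ti\tt_b$ by elements of $H_0$, $H_1$ and simplifies using the relation $\ti\tt_a\ti = \ts_{-1/a^2}\tt_{-a}\ti\tt_{1/a}$, for instance $\ts_\alpha\tt_a\ti\tt_b \equiv \tt_{-a}\ti\tt_{b+1/a}$ modulo $H_1$. Your treatment of the first assertion (Bruhat cells plus coefficient comparison) and of (i) (clearing the affine part to the left of $\ti$, with the fixed point $\infty$ and the pole as coset invariants) is essentially the same reduction made more explicit. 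For (ii), however, you replace the computation by the observation that $H_1$ is the setwise stabilizer of $\{0,\infty\}$, so that the unordered pair $\{g^{-1}(0),g^{-1}(\infty)\}$ is a complete invariant of the right coset; the three families then correspond exactly to the three ways this pair can meet $\{0,\infty\}$, and uniqueness --- which the paper's proof leaves implicit behind ``it suffices to study the orbit'' --- drops out immediately, with $\Omega$ resolving the residual $a\mapsto -a$ ambiguity coming from swapping the two points of the pair. This is cleaner and more self-contained, at the cost of leaving the group-theoretic relation (which the paper reuses elsewhere) in the background. Your remark that the family $\tt_a\ti\tt_b$ must carry the extra condition $b\neq 0$, so as not to collide with the $\tt_c\ti$ family, is also correct: that condition appears in the transversal $\trans_1$ of Remark~\ref{remark-defNF} but is omitted from the enunciation of the Lemma itself, so you have in fact caught a small imprecision in the statement.
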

\begin{proof}
The first part of the Lemma follows from the well-known presentation of $\PSL$ (see e.g.\cite{MR803508}, XI,\textsection 2).  In particular, we recall the following relation in $\PSL$,
$$\frac{1}{a + \frac{1}{z}} = -\frac{1}{a^2}\left(-a + \frac{1}{z + \frac{1}{a}} \right), \quad \forall a \in \cC \setminus \{0\}, z \in \cC,$$
or, equivalently, 
$$
 \ti \tt_a \ti = \ts_{-1/a^2} \tt_{-a} \ti \tt_{1/a}. 
$$
Now, in order to prove items (i) and (ii), it suffices to study the orbit of $\ts_{\alpha} \tt_b$ and $\ts_\alpha \tt_a \ti \tt_b$ under 
the left multiplication by $H_0$ and $H_1$, respectively.  

For instance, given $\ts_\alpha \tt_a \ti \tt_b \in \PSL$ such that $a \ne 0$, the above relation in $\PSL$ allows us to write
$$
\ts_\alpha \tt_a \ti \tt_b \equiv (\ts_{-a^2}\ts_\alpha \ti) \, \ts_\alpha \tt_a \ti \tt_b \equiv \ts_{-a^2}\ti \tt_a \ti \tt_b \equiv \tt_{-a} \ti \tt_{b + 1/a} 
$$
where $\equiv$ denotes the equivalence in $H_1\backslash \PSL$.  Therefore,  the coset $H_1 \ts_\alpha \tt_a \ti \tt_b$ contains either an element of the form $\tt_a \ti \tt_b$ with $a \in \Omega$ and $b \ne -1/a$ or an element of the form $\tt_c \ti$, with $c \ne 0$.
\end{proof}
To state the next result, we introduce the symbols
$$
\tl_k : z \mapsto \ln_k(z), \quad \forall k \in \cZ
$$
where, we recall, $\ln_k$ denotes the $k^{th}$ branch of the logarithm.  Notice that $\tl_0 = \tl$.
\begin{Lemma}\label{lemma-nfEXP}
Each element in $\PGr_{\Exp}$ is either the identity $\id$ or  a  path of the form
$$
[\tl_{k_1}, \ldots,  \tl_{k_n}, \underbrace{\te, \cdots, \te}_{\text{s-times}}]
$$
for some positive integers $n, s$, not both zero, and integers $k_1,\ldots,k_n \in \cZ$ such that the rightmost germ $\tl_{k_n}$ and the leftmost germ $\te$ in the path are not mutually inverses.
\end{Lemma}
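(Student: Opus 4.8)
The plan is to show that reducedness, together with one elementary germ computation, forces the ``all logarithms before all exponentials'' shape. Throughout I regard a non-identity element of $\PGr_{\Exp}$ as a reduced path $[f_1,\ldots,f_N]$, $N\ge 1$, each of whose entries is one of the generating germs: so every $f_i$ is either $\te$ or a branch $\tl_k$ ($k\in\cZ$), the identity path $[\id]$ being the case excluded in the statement. The entire argument then amounts to classifying the possible composable adjacent pairs by their types and deciding, for each, whether the two germs are mutually inverse, since a reduced path is precisely one containing no such pair.

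The crucial step is the pair $[\te,\tl_k]$, and I would show that \emph{any composable such pair is automatically mutually inverse}, hence can never occur. Indeed, composability forces $\source(\te)=\target(\tl_k)$; if $\tl_k$ is based at $q\in\cC^*$ then $\target(\tl_k)=\ln_k(q)\in J_k$, so the exponential germ to its left is exactly $\te$ based at $\ln_k(q)$. But the germ-inverse of $\tl_k$ at $q$ is precisely the exponential germ based at $\ln_k(q)$, because $\ln_k$ is the local inverse of $\exp$ on the strip $J_k$ (this is the relation $\exp\circ\ln_k=\id$ refined to germs). Thus $f_i$ and $f_{i+1}$ are mutually inverse and $[\te,\tl_k]$ is forbidden. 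By contrast $[\te,\te]$ and $[\tl_j,\tl_k]$ are never mutually inverse, since the inverse of an exponential germ is a logarithm germ and vice versa; these impose no constraint. This single germ identification — that composability leaves no freedom and always produces the inverse germ — is the only delicate point; everything after it is bookkeeping.

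Granting this, I would read off the structure from the type word $t_1\cdots t_N\in\{\mathtt{L},\mathtt{E}\}^N$ (with $t_i=\mathtt{L}$ when $f_i=\tl_{k_i}$ and $t_i=\mathtt{E}$ when $f_i=\te$). The exclusion of $[\te,\tl_k]$ says that no $\mathtt{E}$ is immediately followed by an $\mathtt{L}$, i.e.\ the word contains no factor $\mathtt{EL}$; a standard one-line argument (look at the first $\mathtt{E}$) shows that a binary word avoiding $\mathtt{EL}$ must have the form $\mathtt{L}^{\,n}\mathtt{E}^{\,s}$ with $n+s=N\ge 1$ and $n,s\ge 0$. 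This is exactly the asserted shape
$$[\tl_{k_1},\ldots,\tl_{k_n},\underbrace{\te,\ldots,\te}_{s\text{-times}}].$$

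It remains to identify the reducedness condition, and the only surviving adjacency that can be a mutually inverse pair is the junction $[\tl_{k_n},\te]$ between the last logarithm and the first exponential, present exactly when $n,s\ge 1$. For this pair I would record the computation that $\ln_k\circ\exp$ on $J_s$ is the translation $z\mapsto z+2\pi i(k-s)$, so $\tl_{k_n}$ and this $\te$ are mutually inverse iff $k_n=s$, where $J_s$ is the strip containing the source of that $\te$; reducedness at the junction is therefore precisely the stated requirement that the rightmost $\tl_{k_n}$ and the leftmost $\te$ be not mutually inverse. Conversely, any path of the displayed shape obeying this one junction condition is composable and reduced, so the description is exact. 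I expect the main obstacle to be none beyond the germ-inverse identification of the second paragraph; the remaining verifications (the translation formula for $\ln_k\circ\exp$ and the combinatorics of the type word) are routine.
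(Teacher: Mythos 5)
Your proof is correct and follows essentially the same route as the paper: both arguments rest on the two relations $\te\,\tl_k=\id$ (always, which forbids any adjacency of type $[\te,\tl_k]$ and forces the shape $\tl_{k_1}\cdots\tl_{k_n}\te\cdots\te$) and $\tl_k\,\te=\id$ exactly when $\source(\te)\in J_k$ (which is the residual junction condition). Your version is slightly more explicit in verifying that a composable pair $[\te,\tl_k]$ is automatically mutually inverse and that same-type pairs never are, but this is the same proof.
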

\begin{proof}
Each germ $g \in \Gr_{\Exp}$ is defined by a path
$$
[g_1,g_2,\ldots,g_m]
$$
where each $g_i$ is either equal to $\te$ or to $\tl_k$ for some $k \in \cZ$.  We transform this path to a reduced one by successively canceling out each two consecutive germs $g_i,g_{i+1}$ such that $g_i g_{i+1} = \id$.  

Recall now the following (unique) two relations 
in $\Gr_{\Exp}$ (see the discussion at the Introduction),
$$(1)\; \te \tl_k = \id\quad \text{ and }\quad  (2)\; \tl_k \te = \id,\; \text{if $\source(\te) \in J_k$},$$ 
for all $k \in \cZ$.  Therefore, after performing all possible cancellations in the above path, we either obtain the identity path, or  a path as above such that no germ $\tl_k$ has a germ $\te$ to its left; and furthermore, that no consecutive germs $\tl_k, \te$ are mutually inverse.  This is precisely a path of the form in the statement of the Lemma.
\end{proof}
We now consider normal forms inside the free product groupoid $\Free = \Gr_{\PSL}\, *\, \PGr_{\Exp}$.
\begin{Definition}\label{def-productnf}
A {\em product normal form}  in $\Free$ is a path of the form 
$$
g  =  [g_0, h_1, f_1,g_1, \ldots, h_n, f_n,g_n]
$$
for some $n \ge 0$, such that the following holds:
\begin{itemize}
\item The germ $g_0$ lies in $\Gr_\PSL$ (with possibly $g_0 = \id$).
\item For $1 \le i \le n$, $h_i$ is either equal to  $\te$ or to $\tl_k$, for some $k \in \cZ$.
\item If $h_i = \te$ then $f_i \in H_0$ and $g_i$ is given by item (i) of Lemma~\ref{lemma-nfPSL}. 
\item If $h_i = \tl_k$ then $f_i \in H_1$ and $g_i$ is given by item (ii) of Lemma~\ref{lemma-nfPSL}. 
\item There are no subpaths $[\tl_k,\id,\id,\te]$ or $[\te,\id,\id,\tl_k]$ such that the germs $\tl_k$ and $\te$ are mutually inverse.
\end{itemize}
We denote by $\PNF$ the set of all product normal forms.  
\end{Definition}
As a consequence of the definition of $\Free$ and the previous two Lemmas, we obtain the following
\begin{Proposition}
Each morphism of $\Free$ can be uniquely defined by an element of $\PNF$.  
\end{Proposition}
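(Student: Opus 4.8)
The plan is to combine the structure theorem for free products of groupoids recalled in Subsection~\ref{subsect-freequotgroupoid} with the two preparatory Lemmas, the only genuine work being to match the reducedness condition~(v) of a product normal form against the reducedness already built into the free product. First I would record that $\Gr_{\PSL}$ and $\PGr_{\Exp}$ have no common morphism other than the identities: a morphism of $\Gr_{\PSL}$ is a germ of a M\"obius map, whereas a non-identity morphism of $\PGr_{\Exp}$ is, by Lemma~\ref{lemma-nfEXP}, a \emph{formal} reduced path $[\tl_{k_1},\ldots,\te,\ldots,\te]$ of length $\ge 1$ in the free groupoid, in particular never a single M\"obius germ. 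This is precisely why one passes to $\PGr_{\Exp}$ rather than to $\Gr_{\Exp}$, where a path such as $\tl_k\te$ would collapse to the M\"obius germ $\tt_{2\pi i(k-s)}$ and spoil the disjointness. Hence every morphism of $\Free$ is uniquely a reduced alternating word
$$[u_0, v_1, u_1, \ldots, v_r, u_r],$$
with the $u_j$ non-identity elements of $\Gr_{\PSL}$ and the $v_j$ non-identity elements of $\PGr_{\Exp}$, alternating factors (with $u_0$ and/or $u_r$ possibly absent, and the identity corresponding to the empty word).

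Next I would pass from such a word to a product normal form, proving surjectivity. Each $v_j$ is expanded via Lemma~\ref{lemma-nfEXP} into its canonical shape $[\tl_{k_1},\ldots,\tl_{k_p},\te,\ldots,\te]$ and each $u_j$ is kept as a single $\PSL$ germ; flattening yields a path in which single germs $h_i \in \{\te,\tl_k\}$ alternate with $\PSL$ germs, except that inside one expanded block consecutive germs are juxtaposed with no $\PSL$ germ between them. To obtain a genuine PNF I insert an identity $\PSL$ germ between any two such juxtaposed germs, so that each $h_i$ is followed by exactly one $\PSL$ element $x_i$ (an original $u_j$, or an inserted $\id$), and place the leading $\PSL$ factor (or $\id$) as $g_0$. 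Finally I decompose each $x_i$ by the coset decomposition of Lemma~\ref{lemma-nfPSL} keyed to the germ $h_i$ immediately to its left, writing $x_i=f_i g_i$ with $f_i\in H_0,\ g_i\in \trans_0$ when $h_i=\te$, and $f_i\in H_1,\ g_i\in \trans_1$ when $h_i=\tl_k$ (both transversals contain $\id$, so $x_i=\id$ decomposes as $f_i=g_i=\id$). Since inserting identities and regrouping a product leaves the composite unchanged, the resulting PNF defines the same morphism.

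For uniqueness I would check that this construction is reversible and that conditions (i)--(v) are exactly the constraints needed. Given a PNF, collapse each $f_ig_i$ to $x_i\in\PSL$ and group the maximal runs of germs $h_i$ separated by identity $x_i$'s; one must verify that each such run is a \emph{reduced}, non-identity element of $\PGr_{\Exp}$ of the form in Lemma~\ref{lemma-nfEXP}, and that the $\PSL$ germs lying between runs (the non-identity $x_i$, together with $g_0$) are non-identity, so the result is a bona fide reduced alternating word. The decisive observation is that a run can fail to be reduced only at a junction $\tl_k,\te$: a composable juxtaposition $\te,\tl_k$ is \emph{always} mutually inverse, so condition~(v) (through the forbidden $[\te,\id,\id,\tl_k]$) rules it out entirely, forcing every run to be logs-then-exps, while the forbidden $[\tl_k,\id,\id,\te]$ removes exactly the cancelling $\tl_k,\te$ and leaves the admissible ones (those with $\source(\te)\notin J_k$), which is precisely the ``not mutually inverse'' clause of Lemma~\ref{lemma-nfEXP}. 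Conversely an original non-identity $u_j$ yields $x_i\ne\id$, so no spurious merging of runs occurs. Because the coset decomposition of Lemma~\ref{lemma-nfPSL} and the canonical form of Lemma~\ref{lemma-nfEXP} are themselves bijective, the two passages are mutually inverse, and $\PNF\to\Free$ is a bijection.

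The main obstacle is the last paragraph: confirming that condition~(v) matches the free-product reducedness \emph{exactly}, i.e.\ that the two forbidden subpaths account for all and only the cancellations that the reduction in $\Free$ would perform, so that distinct product normal forms can never collapse to the same reduced alternating word. Everything else reduces to the bijectivity of the two already-established local canonical forms and routine bookkeeping of the boundary cases (absent leading/trailing $\PSL$ factor and the identity word).
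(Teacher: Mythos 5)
Your proof is correct and follows essentially the same route as the paper's: the unique alternating-word structure of the free product $\Gr_{\PSL}\,*\,\PGr_{\Exp}$, expansion of the $\PGr_{\Exp}$ factors via Lemma~\ref{lemma-nfEXP} with identity $\PSL$ germs inserted, and the coset decomposition of Lemma~\ref{lemma-nfPSL} keyed to the adjacent $\te$ or $\tl_k$. Your third paragraph merely makes explicit the reversibility and the matching of condition~(v) with free-product reducedness, which the paper leaves implicit.
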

\begin{proof}
By the definition of a free product, each non-identity element $g \in \Free$ can be uniquely identified with a path
$$
g = [g_1, g_2,  \ldots, g_n]
$$
such that the following conditions hold:
\begin{itemize}
\item $g_i \in \Gr_{\PSL} \cup \PGr_{\Exp}$, for $i = 1,\ldots,n$;
\item no two consecutive morphisms $g_i$, $g_{i+1}$ belong to the same groupoid.
\item No $g_i$ is the identity morphism. 
\end{itemize}
Given such a path, we can uniquely obtain a path in $\PNF$.  Indeed, proceeding from left to right, for $i = 1,\ldots,n$, we do the following:
\begin{itemize}
\item[(1)] If $g_i \in \PGr_{\Exp}$ then, we use Lemma \ref{lemma-nfEXP} to write
$$
g_i = [\tl_{k_1},\ldots,\tl_{k_n},\te,\ldots,\te]
$$
and, in  the expression of $g$, we replace $g_i$ by the subpath
$$[\tl_{k_1},\id,\id,\tl_{k_2},\ldots,\tl_{k_n},\id,\id,\te,\id,\id,\ldots,\te]$$
\item[(2)] If $i \ge 1$, $g_i$ belongs to $\Gr_{\PSL}$  and $g_{i-1}$ has a $\te$ as its last symbol then we use Lemma \ref{lemma-nfPSL} to write
$$g_i = f\, g^\prime, \text{ for some }f \in H_0, \text{ and $g^\prime$ given by Lemma \ref{lemma-nfPSL}.(i)} $$
and we replace $g_i$ by the subpath $[f,g^\prime]$ in  the expression of $g$.
\item[(3)] If $i \ge 1$, $g_i$ belongs to $\Gr_{\PSL}$ and $g_{i-1}$ has a $\tl_k$ as its last symbol then we use Lemma \ref{lemma-nfPSL} to write
$$g_i = f\, g^\prime, \text{ for some }f \in H_1,  \text{ and $g^\prime$ given by Lemma \ref{lemma-nfPSL}.(ii)} $$
and we replace $g_i$ by the subpath $[f,g^\prime]$ in  the expression of $g$.
\end{itemize}
This concludes the proof.
\end{proof}
\begin{Remark}
Recall that the subgroup of translations by $2\pi i \cZ$ lies in the intersection 
$\Gr_{\Exp}\cap\Gr_\PSL$.  Therefore, the normal forms in the free product groupoid $\Gr_{\PSL}\, *\, \Gr_{\Exp}$ are more subtle to describe than those in 
$\Free$.  
\end{Remark}
Let now $\NF$ be the set of normal form paths defined in Remark~\ref{remark-defNF} of the Introduction. 
Clearly, there is a natural embedding of $\NF$ into $\PNF$ given by
$$[g_0,h_1,g_1,\ldots,h_n,g_n] \in \NF \longrightarrow  [g_0,h_1,\id,g_1,\ldots,h_n,\id,g_n]  \in  \PNF$$
To simplify the notation, we will keep the symbol $\NF$ to denote the image of this embedding. 
\subsection{Quotienting $\Gr_{\PSL} \, * \, \PGr_{\Exp}$}\label{subsect-listrel}
Now, we consider the following collection $\Rel$ of {\em relations} in $\vF$
$$
\Rel\;\;
\begin{cases}
\;\tl \te  = \tt_{-2\pi i r}, & \text{ if }\source(\te) \in J_r.\\
\;\te \ts_{-1} = \ti \te \\
\;\te \tt_a = \ts_{\exp(a)} \te, & \forall a \in \cC. 
\end{cases}
$$
where $\tl = \tl_0$ is the $0^{th}$ branch of the logarithm.    Notice that, for simplicity, we have 
written these relations in the form of an equality of germs $w = u$, but this should be understood as 
saying that $w$ composed with the inverse of $u$  is a relation (in the sense of subsection \ref{subsect-freequotgroupoid}) at 
every point where the corresponding germs are defined.

Let $\vF/\NGr(\Rel)$ denote the quotient groupoid, as defined in the previous subsection, and let
$$\pi : \vF \longrightarrow \vF/\NGr(\Rel)$$ 
be the canonical morphism.  
The following Theorem will be proved in the next subsection. 
\begin{Theorem}\label{theorem-normalforminquotient}
Each element in the quotient $\Free/\NGr(\Rel)$ is uniquely defined by a normal form in $\NF$. 
\end{Theorem}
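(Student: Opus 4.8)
The statement asserts both surjectivity and injectivity of $\pi$ restricted to $\NF$, and the plan is to establish these separately.

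\emph{Surjectivity.} I would start from an arbitrary morphism of $\Free$ and, by the preceding Proposition, represent it by a product normal form $[g_0,h_1,f_1,g_1,\ldots,h_n,f_n,g_n]\in\PNF$, then reduce it to (the image of) an element of $\NF$ using only consequences of $\Rel$. First I eliminate every branch $\tl_k$ with $k\neq 0$: since every morphism of the quotient is invertible and both $\te\tl_k$ and $\te\tt_{2\pi ik}\tl$ equal $\id$ there — the latter because $\te\tt_{2\pi ik}=\ts_{\exp(2\pi ik)}\te=\te$ with $\ts_{\exp(2\pi ik)}=\id$ — cancelling $\te$ gives $\tl_k=\tt_{2\pi ik}\tl$; this replaces $\tl_k$ by $\tl$ at the cost of a translation that I absorb into the neighbouring $\Gr_\PSL$-factor. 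Next I absorb each $f_i$. If $h_i=\te$ and $f_i\in H_0$, then $f_i=\tt_a$ or $\tt_a\ts_{-1}$, and $\te\tt_a=\ts_{\exp(a)}\te$ together with $\te\ts_{-1}=\ti\te$ give $\te f_i=c_i\te$ with $c_i\in H_1\subset\Gr_\PSL$; symmetrically, if $h_i=\tl$ and $f_i\in H_1$, the inverted relations $\tl\ts_\alpha=\tt_a\tl$ (with $\exp(a)=\alpha$) and $\tl\ti=\ts_{-1}\tl$ give $\tl f_i=d_i\tl$ with $d_i\in H_0$.

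In each case the emerging factor merges to the left into the previous slot $g_{i-1}$; I re-decompose $g_{i-1}c_i$ (resp.\ $g_{i-1}d_i$) along the coset $H_0$ (resp.\ $H_1$) into a transversal representative times a new $H$-factor, which I push one block further left. Because conjugation by $\te$ carries $H_0$ isomorphically onto $H_1$ and conjugation by $\tl$ carries $H_1$ into $H_0$, these $H$-factors neither grow nor proliferate: they simply migrate toward $g_0$, which, being unconstrained, absorbs them. Whenever renormalisation makes a transversal slot trivial and creates a subpath $\te\,\id\,\tl$ or $\tl\,\id\,\te$, the relations $\te\tl=\id$ and $\tl\te=\tt_{-2\pi ir}$ strictly shorten the path, so the cascade terminates. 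The resulting path satisfies (i)--(v) of Definition~\ref{def-nf}, proving $\pi|_\NF$ surjective.

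\emph{Injectivity.} For uniqueness I would use the classical device of an action on the set of normal forms (van der Waerden's method). I define a groupoid action $\star$ of $\Free$ on the set $\NF$, compatible with source and target, by prescribing for each generator a base-point-respecting bijection of the form ``prepend and renormalise'': for $g\in\Gr_\PSL$ one simply left-multiplies the leading factor $g_0$ (which remains in $\Gr_\PSL$); for $\te$ (resp.\ $\tl_k$) one prepends the letter, decomposes the leading $\Gr_\PSL$-factor along the coset $H_0$ (resp.\ $H_1$), and either opens a new transcendental block or, in the cancelling cases, contracts the existing first block. Since $\Free$ is the free product of $\Gr_\PSL$ with the free groupoid $\PGr_\Exp$, it suffices to check that the $\Gr_\PSL$-prescription is a genuine action and that $\te$ and each $\tl_k$ act by mutually inverse bijections wherever composable; the universal property of the free product then yields the action of all of $\Free$.

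Finally I would verify that every relation in $\Rel$ acts as the identity on $\NF$, so that $\star$ descends to an action of $\Free/\NGr(\Rel)$, and that acting by the class $\pi(g)$ on the identity normal form at $\source(g)$ returns $g$ itself; then $\pi(g)=\pi(g')$ forces $g=g'$, which is exactly injectivity of $\pi|_\NF$. The main obstacle is precisely the construction of the $\te$- and $\tl_k$-actions and the proof that $\Rel$ acts trivially: prepending $\te$ to a form beginning with $\tl$ (and vice versa) triggers the cancelling cases, and one must track base points and branches so that the periodicity relation $\tl\te=\tt_{-2\pi ir}$, the conversion of the branch index of $\tl_k$ into a $2\pi ik$-translation, the choice of $\Omega$ as fundamental domain for $z\mapsto -z$, and the constraint $b\neq -1/a$ defining $\trans_1$ are all respected simultaneously. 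Making these case distinctions close up consistently is where essentially all the difficulty of the theorem resides.
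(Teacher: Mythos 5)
Your plan is correct in outline, and its two halves relate to the paper's proof differently. The surjectivity half is essentially the paper's argument: the paper packages your ``absorb $f_i$, migrate the $H$-factors leftward, cancel $\te\,\id\,\tl$ and $\tl\,\id\,\te$'' cascade as a reduction system $(\PNF,\rightarrow)$ on product normal forms and proves termination via a lexicographic well-order on $(m(g),n(g),l_h(g))$, which is exactly the content of your observation that the $H$-factors only migrate toward $g_0$ while cancellations strictly shorten the path. For injectivity, however, you genuinely diverge: the paper does \emph{not} use van der Waerden's action method, but instead proves that the same reduction system is confluent by checking the Knuth--Bendix critical pairs (overlaps of left-hand sides of the rules), invokes the equivalence of confluence with the Church--Rosser property, and combines this with the observation that $\leftrightstar$ on $\PNF$ realizes precisely the congruence defined by $\NGr(\Rel)$. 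The trade-off is real but modest. The confluence route localizes all the branch-and-base-point bookkeeping into finitely many overlap computations, each of which is a short mechanical check (the paper displays one, $\te\tl\te$, and omits the rest as ``tedious''), and it needs the extra step identifying $\leftrightstar$ with the quotient congruence. Your route must instead construct a globally well-defined groupoid action of $\Free$ on $\NF$ and verify that every relation in $\Rel$ acts trivially --- the same case distinctions (the periodicity relation $\tl\te=\tt_{-2\pi i r}$, the branch index of $\tl_k$, the fundamental domain $\Omega$, the constraint $b\ne -1/a$ in $\trans_1$) reappear there, as you correctly anticipate --- but it rewards you with a direct bijection between $\NF$ and the quotient via evaluation at the identity normal form, without any appeal to Newman's lemma. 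Neither you nor the paper carries out the full case verification, so at the level of detail presented the two arguments are comparably complete; just be aware that in the groupoid setting your action must respect the target fibration (a germ $\gamma$ acts only on normal forms whose target matches $\source(\gamma)$), and that the mutual inverseness of $\te$ and $\tl_k$ in $\PGr_{\Exp}$ (Lemma~\ref{lemma-nfEXP}) forces the corresponding bijections to be inverse to one another on the appropriate fibers, which is an extra compatibility absent from the group case.
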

We now observe that, by construction and the universal property of $\Free$, there is an uniquely defined groupoid epimorphism 
$$\phi: \Free \rightarrow \Gr_{\PSL,\Exp}$$ 
which is induced by the inclusion morphisms $\Gr_{\PSL} \rightarrow \Gr_{\PSL,\Exp}$ and $\Gr_{\Exp} \rightarrow \Gr_{\PSL,\Exp}$. 

Using the obvious relations between the exponential, the affine maps and the involution, we conclude that this morphism factors out
through the canonical morphism $\pi : \Free \rightarrow \Free/\NGr(\Rel)$, i.e.~we have a commutative diagram
$$
\begin{tikzpicture}
  \matrix (m) [matrix of math nodes,row sep=3em,column sep=4em,minimum width=2em]
  {
     \Free & \Gr_{\PSL,\Exp} \\
     \Free/\NGr(\Rel) &  { }\\};
  \path[-stealth]
    (m-1-1) edge node [left] {$\pi$} (m-2-1)
            edge  node [above] {$\phi$} (m-1-2)
    (m-2-1) edge node [above] {$\vphi\;\;$} (m-1-2);
\end{tikzpicture}
$$
for an uniquely defined morphism $\vphi: \Free/\NGr(\Rel) \rightarrow \Gr_{\PSL,\Exp}$. 
As an immediate consequence of this discussion and Theorem~\ref{theorem-normalforminquotient}, we obtain
\begin{Corollary}
The first statement of the Main Theorem is true. 
\end{Corollary}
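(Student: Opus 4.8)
The plan is to establish the two halves of the statement separately: \emph{existence}, that every class of $\Free/\NGr(\Rel)$ meets $\NF$, and \emph{uniqueness}, that distinct elements of $\NF$ stay distinct in the quotient. I identify $\NF$ with its image in $\PNF \cong \Free$. Two facts about $\Free$ and $\Rel$ will be used constantly. First, the exponential conjugates $H_0$ onto $H_1$: since $\te\tt_a = \ts_{\exp(a)}\te$ and $\te\ts_{-1}=\ti\te$, one has $\te f \te^{-1}\in H_1$ for all $f\in H_0$, and, inverting, $\tl$ conjugates $H_1$ into $H_0$. Second, every branch collapses to the zeroth one modulo $\Rel$: the special case $a = 2\pi i k$ of $\te\tt_a = \ts_{\exp(a)}\te$ reads $\te\tt_{2\pi i k} = \te$, and combining this with $\te\tl_k = \id = \te\tl$ (both already valid in $\Free$) gives $\tl_k = \tt_{2\pi i k}\,\tl$ in the quotient.

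For existence I would run an explicit reduction on a product normal form $g = [g_0, h_1, f_1, g_1, \dots, h_n, f_n, g_n]$. First replace each $\tl_k$ by $\tt_{2\pi i k}\tl$ and absorb the translation into the neighbouring $\Gr_{\PSL}$-block. Then eliminate the auxiliary factors $f_i$: if $f_i\in H_0$ sits just to the right of an $h_i=\te$, rewrite $\te f_i = (\te f_i \te^{-1})\,\te$, which moves the $\PSL$-element $\te f_i \te^{-1}\in H_1$ to the left of $\te$ where it merges with the preceding block; symmetrically $\tl f_i = (\tl f_i \tl^{-1})\,\tl$ with $\tl f_i \tl^{-1}\in H_0$ when $f_i\in H_1$ follows a $\tl$. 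Each such move strictly decreases the number of non-trivial $f_i$. Finally collapse any forbidden subpath $[\te,\id,\tl]$ by $\te\tl = \id$, and $[\tl,\id,\te]$ by $\tl\te = \tt_{-2\pi i r}$, the latter removing one transcendental letter. Ordering the complexity lexicographically (number of non-trivial $f_i$, then number of transcendental letters) shows the process terminates at an element of $\NF$ in the same $\NGr(\Rel)$-class.

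For uniqueness I would use a van der Waerden permutation-representation argument: build a groupoid action of $\Free$ on the set $\NF$, anchored by the source map. As $\Free$ is a free product, by its universal property it is enough to give compatible actions of $\Gr_{\PSL}$ and of the free groupoid $\PGr_{\Exp}$. Let $\Gr_{\PSL}$ act by $s\cdot[g_0,h_1,\dots] = [s g_0, h_1,\dots]$. For $\PGr_{\Exp}$, freeness means I only need each generator to act as a partial bijection. I set $\te\cdot[g_0,h_1,\dots,g_n] = [\te f \te^{-1}, \te, g_0', h_1, \dots, g_n]$, where $g_0 = f g_0'$ with $f\in H_0$ and $g_0'\in T_0$, \emph{except} in the cancellation case $g_0\in H_0$ and $h_1 = \tl$, where the incipient subpath $[\te,\id,\tl]$ is removed via $\te\tl=\id$ and the block $\te f\te^{-1}\in H_1$ absorbs the following $T_1$-representative $g_1$; the action of $\tl$ is defined symmetrically, and $\tl_k$ acts as $\tt_{2\pi i k}$ followed by $\tl$. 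A short check shows $\te$ and each $\tl_k$ act as mutually inverse partial bijections, so this is a genuine action of $\PGr_{\Exp}$, and the two actions assemble into an action of $\Free$.

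It then remains to verify that the relations of $\Rel$ act trivially --- $w\cdot\gamma = u\cdot\gamma$ for each relation $w=u$ and each $\gamma\in\NF$ --- after which the action descends to $\Free/\NGr(\Rel)$, since $\NGr(\Rel)$ is generated by products of conjugates of the relations and the action is by groupoid automorphisms. Because the action is arranged so that $\gamma\cdot[\id_{\source(\gamma)}] = \gamma$ for every $\gamma\in\NF$, two normal forms with equal image under $\pi$ must act identically on $[\id]$, hence coincide; this is injectivity, and with existence it proves the theorem. The main obstacle is exactly this construction: defining $\te\cdot(-)$ and $\tl\cdot(-)$ so the output is again a \emph{reduced} normal form (the cancellation clauses forced by condition (v) of Definition~\ref{def-nf} are what make this delicate), and then checking, case by case and with careful source/target germ bookkeeping over $\cPC$, that $\te\tt_a = \ts_{\exp(a)}\te$ and its two companions really do act as the identity on every normal form.
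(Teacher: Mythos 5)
You are in effect re-proving Theorem~\ref{theorem-normalforminquotient} rather than deducing the Corollary from it; for this Corollary only the \emph{existence} half is needed: every germ of $\Gr_{\PSL,\Exp}$ lifts to $\Free$, the lift reduces to some element of $\NF$ in the same $\NGr(\Rel)$-class, and since the relations in $\Rel$ are genuine identities of germs the reduction does not change the image in $\Gr_{\PSL,\Exp}$. Your uniqueness half --- a van der Waerden-type action of $\Free/\NGr(\Rel)$ on $\NF$ anchored at $[\id]$ --- is a genuinely different route from the paper, which instead verifies confluence of the rewriting system $(\PNF,\rightarrow)$ by the Knuth--Bendix overlap criterion and invokes the Church--Rosser property. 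Your route is the classical one for Britton's lemma and would be a legitimate alternative, but you explicitly defer the case analysis that carries all the content (well-definedness of the cancellation clauses and triviality of the relations on every normal form), so that half is a sketch; since uniqueness is not needed for this statement, this does not affect the Corollary.

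There is, however, a genuine gap in the existence half, which is exactly the part that matters: your termination measure does not decrease. You order complexity lexicographically by (number of non-trivial $f_i$, number of transcendental letters), but when $\te f_i$ is rewritten as $(\te f_i\te^{-1})\,\te$ the element $\te f_i\te^{-1}\in H_1$ merges with the block $g_{i-1}$, and the merged block must be re-decomposed as $f_{i-1}'\,g_{i-1}'$ relative to $H_0$ or $H_1$; this can turn a trivial $f_{i-1}$ into a non-trivial one, so the count of non-trivial $f_j$ can stay constant while the letter count is unchanged --- no strict decrease. Worse, the collapse $\tl\te\Rightarrow\tt_{-2\pi i r}$ fuses three $\Gr_{\PSL}$-blocks into one whose re-decomposition can \emph{create} a non-trivial $f$; since you placed the $f$-count in the most significant position, your measure then strictly increases. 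The repair is the one used in Proposition~\ref{prop-pnfconfluent}: order first by the number of non-principal branches $\tl_k$, $k\neq 0$, then by the number of transcendental letters, and only then by the vector $\big(l_h(f_n),\ldots,l_h(f_1)\big)$ of $h$-lengths read from the right, compared lexicographically --- a conjugation move at position $i$ strictly decreases $l_h(f_i)$ and only perturbs the strictly less significant coordinate $l_h(f_{i-1})$, while a collapse decreases the letter count, which dominates. With that potential function your reduction terminates in $\NF$ and surjectivity of $\vphi$ follows as you indicate.
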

\subsection{Reduction to normal forms in $\Free/\NGr(\Rel)$}\label{subsect-confluentnf}
This subsection is devoted to the proof of Theorem~\ref{theorem-normalforminquotient}.  For this, we briefly recall the basic concepts of reduction systems (see e.g.~\cite{MR1629216}).  An {\em abstract reduction system} is a pair $(X,\rightarrow)$ where the {\em reduction}  $\rightarrow$  is a binary relation on the set $X$.  Traditionally, we write $x \rightarrow y$ (or $y \leftarrow x$) instead of $(x,y) \in \rightarrow$. The binary relation $\rightstar$ is the reflexive transitive closure of $\rightarrow$. In other words, $x \rightstar y$ if and only if there is $x_0,\ldots,x_n$  such that $x = x_0 \rightarrow x_1 \rightarrow \cdots \rightarrow x_n = y$.  The binary relation $\leftrightstar$ is the reflexive transitive symmetric closure of $\rightarrow$.  Equivalently, $x \leftrightstar y$ if and only if there are $z_1,\ldots,z_n \in X$ such that
$$
x \leftrightarrow z_1 \leftrightarrow z_2 \cdots \leftrightarrow z_n \leftrightarrow y
$$
where $\leftrightarrow = \leftarrow \cup \rightarrow$.
We also say that:
\begin{itemize}
\item $x \in X$ is {\em reducible} if there is a $y \in X$ such that $x \rightarrow y$.
\item $x \in X$ is in {\em normal form} if it is not reducible.
\item $x \in X$ is a {\em normal form} of $y \in X$ if $y \rightstar x$ and $x$ is a normal form.   
\item $x,y \in X$ are {\em joinable} if there is a $z \in X$ such that $x \rightstar z \leftstar y$.
\end{itemize}
A reduction system $(X,\rightarrow)$ is called {\em terminating} if there is no infinite descending chain $x_0 \rightarrow x_1 \rightarrow \cdots$.  In this case, each element $x$ has at least one normal form.  A reduction system $(X,\rightarrow)$ is called {\em confluent} if $y_1 \leftstar x \rightstar y_2$, implies that the elements $y_1$ and $y_2$ are joinable.  We say that $(X,\rightarrow)$ is {\em Church-Rosser} if $x \leftrightstar y$ implies that $x$ and $y$ are joinable.  These two properties are usually pictured by the following respective diagrams
$$
\begin{tikzpicture}
  \matrix (m) [matrix of math nodes,row sep=3em,column sep=4em,minimum width=2em]
  {
     x & y_1 \\
     y_2 &  z \\};
  \path[->]
    (m-1-1) edge node [left] {$*$} (m-2-1)
            edge  node [above] {$*$} (m-1-2);
  \path[-stealth,dashed]          
    (m-2-1) edge node [above] {$*$} (m-2-2)
    (m-1-2) edge node [right] {$*$} (m-2-2);
\end{tikzpicture}\quad
\begin{tikzpicture}
  \matrix (m) [matrix of math nodes,row sep=3em,column sep=4em,minimum width=2em]
  {
     x & & y \\
     & z &   \\};
  \path[<->]
    (m-1-1) edge node [above] {$*$} (m-1-3);
  \path[-stealth,dashed]          
    (m-1-1) edge node [left] {$*\, $ } (m-2-2)
    (m-1-3) edge node [right] { $\, *$} (m-2-2);
\end{tikzpicture}
$$
We shall use the following consequences of the definitions:
\begin{itemize}
\item[(i)] if $(X,\rightarrow)$ is terminating and confluent then every element has a unique normal form (see \cite{MR1629216}, Lemma 2.1.8).
\item[(ii)]  The Church-Rosser and the confluent properties are equivalent (see \cite{MR1629216}, Theorem 2.1.5).
\end{itemize}
We are going to apply this formalism to the set $X = \PNF$ of product normal forms (see definition~\ref{def-productnf}).  
In order to simplify the notation, in the remaining of this subsection, we shall identify a path $[f_1, \ldots, f_n]$ 
with a word $f_1 f_2 \cdots f_n$ in the letters $f_1,\ldots,f_n$.  We stress that the formal word $f_1 f_2 \cdots f_n$ should not be confounded with the element of the groupoid $\Gr_{\PSL,\Exp}$ defined by the corresponding path.
The letter $\tl_0$ will be written simply $\tl$. Moreover, the identity path $[\id]$ will be identified with the empty word $\eps$. Thus, for instance, the path $[\te,\id,\id,\te]$ will be written simply as $\te\te$.

First of all, we introduce the following {\em reduction rules} (recall that both sides of the $\Rightarrow$ relation should be seen as paths in $\Free$):
\begin{align*}
\tl_k &\Rightarrow \tt_{2\pi i k} \tl, & \forall k \in \cZ^*\\
\te \tl  &\Rightarrow \eps, \\
\tl \te  &\Rightarrow \tt_{-2\pi i r}, & \text{ if }\source(\te) \in J_r\\
\te \ts_{-1} &\Rightarrow \ti \te, \\
\te \tt_a &\Rightarrow \ts_{\exp(a)} \te, & \forall a \in \cC. \\
\tl \ti &\Rightarrow \ts_{-1} \tl &  \text{ if }\arg_0(\source(\ti)) \ne \pi\\
\tl\ti &\Rightarrow \tt_{2\pi i}\ts_{-1} \tl& \text{ if }\arg_0(\source(\ti)) = \pi\\
\tl \ts_\alpha &\Rightarrow \tt_b \tl,  , & \forall \alpha \in \cC^*\\
\end{align*}
where, in this last rule, we define $b \in \cC$ as follows:
$$
b = \begin{cases}
\ln_0(\alpha), & \text{ if $-\pi < \mathrm{arg}_0(\alpha) + \arg_0(\source(\ts_\alpha)) \le \pi$}\\
\ln_{-1}(\alpha), & \text{ if $\pi < \mathrm{arg}_0(\alpha) + \arg_0(\source(\ts_\alpha)) \le 2\pi$}\\
\ln_{1}(\alpha), & \text{ if $-2\pi < \mathrm{arg}_0(\alpha) + \arg_0(\source(\ts_\alpha)) \le -\pi$}\\
\end{cases}
$$
The reduction system $(\PNF,\rightarrow)$ is now defined as follows: Given $g,h \in \PNF$, we say that $g \rightarrow h$ if there exists some reduction rule $u \Rightarrow v$ as above such that
one can write 
$$
g = g^\prime u g^{\prime\prime}, \quad \text{ for some }g^\prime,g^{\prime\prime} \in \PNF,
$$
and $h \in \PNF$ is the product normal form of the path $g^\prime v g^{\prime\prime}$.
\begin{Remark}
Notice that the simple substitution $g^\prime u g^{\prime\prime} \rightarrow  g^\prime v g^{\prime\prime}$ would {\em not} map $\PNF$ into itself.  For instance, if $b = \ln_0(-2)$ then 
applying the fifth substitution rule to $g = \te\tt_a \te \tt_b$, one would obtain $\te \tt_a \ts_{-2} \te$, which is not in $\PNF$, since $\tt_a \ts_{-2}$ should be decomposed in $H_0 T_0$ as $(\tt_a \ts_{-1}) \ts_2$.
\end{Remark}
\begin{Proposition}\label{prop-pnfconfluent}
The reduction system $(\PNF,\rightarrow)$ is terminating and confluent.  Moreover, the set of normal forms of $(\PNF,\rightarrow)$ is 
precisely the subset $\NF$.
\end{Proposition}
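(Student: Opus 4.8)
\subsection*{Proof proposal}

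The plan is to treat $(\PNF,\rightarrow)$ as an abstract reduction system and to establish, in order: termination, local confluence, and the identification of the irreducible elements with $\NF$. Confluence will then follow from termination and local confluence by Newman's lemma, and the unique-normal-form statement from consequence (i) recalled above. Throughout I will exploit that every reduction rule is a valid relation in $\Gr_{\PSL,\Exp}$, so the germ defined by a path is preserved; although this semantic invariance does not by itself yield confluence, it is a useful consistency check on the critical-pair computations.

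For termination I would introduce a lexicographic measure $(N(g),M(g))$ on a product normal form $g=[g_0,h_1,f_1,g_1,\ldots,h_n,f_n,g_n]$. The primary component $N(g)=n$ counts the transcendental letters $h_i\in\{\te\}\cup\{\tl_k\}$. The two cancelling rules $\te\tl\Rightarrow\eps$ and $\tl\te\Rightarrow\tt_{-2\pi i r}$ strictly lower $N$ by two, while all remaining rules leave $N$ unchanged. The secondary component $M(g)$ must therefore strictly decrease under the branch rule $\tl_k\Rightarrow\tt_{2\pi i k}\tl$ and under the ``sorting'' rules $\te\ts_{-1}\Rightarrow\ti\te$, $\te\tt_a\Rightarrow\ts_{\exp(a)}\te$, $\tl\ti\Rightarrow(\ldots)\ts_{-1}\tl$ and $\tl\ts_\alpha\Rightarrow\tt_b\tl$. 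The common effect of all of these is to move $\mathrm{PSL}$-data one transcendental letter to the left, from the block following $h_i$ into the block preceding it, and in the branch case to replace an index $k\neq0$ by $0$. I would set $M(g)=\sum_{i=1}^{n} B^{\,i}\big(\mu(f_i)+|k_i|\big)$, where $\mu(f_i)\in\{0,1,2\}$ is the combinatorial length of the $H$-part $f_i$ in its transversal complement, $k_i$ is the branch index of $h_i$ (zero when $h_i=\te$), and $B$ a constant. The point is that a single $\rightarrow$-step transfers $\mathrm{PSL}$-data across at most one transcendental letter: it clears bounded complexity at some position $i$ and adds at most a bounded amount (at most $2$) at position $i-1$, any further block merging produced by the re-normalisation only lowering $N$. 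Choosing $B$ larger than this bound makes the geometric weights $B^{\,i}$ dominate the cascade and forces $(N,M)$ to drop.

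For local confluence I would enumerate the overlaps between the left-hand sides $\tl_k$, $\te\tl$, $\tl\te$, $\te\ts_{-1}$, $\te\tt_a$, $\tl\ti$, $\tl\ts_\alpha$ and check that each critical pair is joinable, tracking the geometric side conditions carefully: the region $J_r$ containing $\source(\te)$ in $\tl\te\Rightarrow\tt_{-2\pi i r}$, the alternative $\arg_0(\source(\ti))=\pi$ distinguishing the two $\tl\ti$ rules, and the three-case definition of $b$ in $\tl\ts_\alpha\Rightarrow\tt_b\tl$. The resolutions rely on the arithmetic identities built into the rules, such as $\exp(2\pi i k)=1$ and $\exp(b)=\alpha$; thus the overlap $\te\tl\te$ closes because $\te\tt_{-2\pi i r}$ reduces via $\te\tt_a\Rightarrow\ts_{\exp(a)}\te$ to $\te$, and the overlap $\te\tl\ts_\alpha$ closes because $\te\tt_b\tl$ reduces to $\ts_{\exp(b)}=\ts_\alpha$. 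I expect this case analysis, complicated by the fact that each $\rightarrow$-step silently re-normalises to $\PNF$ rather than rewriting strings naively, to be the main obstacle of the proof.

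Finally I would characterise the irreducible elements. A product normal form admits no reduction exactly when no $h_i$ is a branch $\tl_k$ with $k\neq0$ (else the branch rule applies), every $H$-part $f_i$ is the identity (else one of $\te\ts_{-1},\te\tt_a$ or $\tl\ti,\tl\ts_\alpha$ applies), and there is no adjacent pair $\te\tl$ or $\tl\te$ (else a cancelling rule applies). These are precisely conditions (i)--(v) defining $\NF$: the vanishing of the $f_i$ reduces the $g_i$ to transversal representatives in $T_0$ or $T_1$, and the absence of $\te\tl$, $\tl\te$ is the prohibition of the subpaths $[\te,\id,\tl]$ and $[\tl,\id,\te]$. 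For the converse I would verify that no rule applies to a path already in $\NF$, using that $-1\notin\Omega$ so that a representative $g_i\in T_0$ following a $\te$ never begins with $\ts_{-1}$ or a translation. This identifies the set of normal forms with $\NF$, and together with termination and confluence completes the proof.
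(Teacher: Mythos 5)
Your proposal follows essentially the same route as the paper's proof: termination via a well-founded measure that strictly decreases under every rule (your geometrically weighted pair $(N,M)$ plays exactly the role of the paper's lexicographic triple $(m(g),n(g),l_h(g))$ built from the reversed tuple of $h$-lengths, both designed to absorb the leftward cascade of $\mathrm{PSL}$-data), confluence via the Knuth--Bendix critical-pair criterion with the same worked overlap $\te\tl\te$, and the same identification of the irreducible paths with conditions (i)--(v) of $\NF$. Like the paper, you leave the bulk of the critical-pair verification as a routine but tedious case analysis.
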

\begin{proof}
We claim that there can be no infinite sequence of reductions.  To prove this, we define a well-order on $\PNF$ which will decrease after each reduction.

First of all, recall that a germ 
$f \in H_0 \cup H_1$ is either the identity or can be uniquely expressed as follows:
\begin{itemize}
\item[(a)] In $H_0$:  $f= \tt_a \ts_{-1}$, $f = \tt_a$ or $f = \ts_{-1}$, for some $a \in \cC \setminus \{0\}$,
\item[(b)] In $H_1$:  $f= \ts_\alpha \ti$, $f = \ts_\alpha$, or $f = \ti$, for some $\alpha \in \cC^* \setminus \{1\}$.
\end{itemize}
Accordingly, we define the {\em $h$-length} $l_h(f) \in \{0,1,2\}$ by
$$l_h(f) = 2\ \text{ if }f \in \{\tt_a \ts_{-1}, \ts_\alpha \ti\}, \;\; l_h(f) = 1\ \text{ if }f \in \{\tt_a, \ts_\alpha,\ts_{-1}, \ti\},$$
and we put $l_h(f) = 0$ if $f = \id$.

Consider now a path $g = g_0 h_1 f_1 g_1 \cdots h_n f_n g_n$ in $\PNF$.  We define, its {\em $h$-length} as the integer $n$-vector 
$$l_h(g) = (l_h(f_n), l_h(f_{n-1}), \ldots, l_h(f_1)) \in \{0,1,2\}^n$$
We further define $m(g)$ to be the total number of germs of type $\tl_k$, for $k \in \cZ^*$, and $n(g)$ to be the total number of germs of type $\te$ or $\tl$ in the expression of $g$.  

Finally, we define a total order in $\PNF$ by saying that 
$g < g^\prime$ if 
$$(m(g),n(g),l_h(g)) <_\lex (m(g^\prime),n(g^\prime),l_n(g^\prime)),$$
where $<_\lex$ is the lexicographical ordering in the set of positive integer vectors.

By inspecting the rules in $\Rel$, one sees that if $g^\prime \rightarrow g$ then $g < g^\prime$.  
Moreover, a path $g \in \PNF$ is not reducible if and only if the following holds
\begin{itemize}
\item $m(g) = 0$,
\item $l_h(g) = (0,\ldots,0)$ and, 
\item $g$ contains no subpath of the form $\te \tl$ or $\tl\te$. 
\end{itemize}
According to Definition~\ref{def-nf}, this corresponds precisely to say that $g \in \NF$.  Thus, we have proved that $(\PNF,\rightarrow)$ is terminating and that its set of normal forms is $\NF$.

In order to prove the confluence of the reduction system, we use Bendix-Knuth criteria as stated in \cite{MR1161694}, Lemma 6.2.4.  
Thus it suffices to consider all shortest paths for which at least two of the above reduction rules can be applied (i.e. they overlap) and 
show that the paths obtained after applying these reductions are then joinable.  For instance, one sees that
$$
\begin{tikzpicture}
  \matrix (m) [matrix of math nodes,row sep=3em,column sep=7em,minimum width=2em,text height=1.5ex, text
depth=0.25ex]
  {
     \te\tl\te & \te \tt_{-2\pi i r} \\
     \te &  \te \\};
  \path[-stealth]
    (m-1-1) edge node [left] {\tiny $\te\tl \rightarrow \eps$} (m-2-1)
            edge  node [above] {\tiny $\tl\te \rightarrow \tt_{-2\pi i r}$} (m-1-2)
    (m-2-1) edge node [above] {$*$} (m-2-2)
    (m-1-2) edge node [right] {$*$} (m-2-2);
\end{tikzpicture}
$$
The computation for the other possible overlaps is straightforward but quite tedious.  We omit this computation.
\end{proof}
We are now ready to prove Theorem \ref{theorem-normalforminquotient}:
\begin{proof}[Proof of Theorem \ref{theorem-normalforminquotient}]  We need to prove that each coset of $\Free/\NGr(Rel)$ contains exactly one element of $\NF$.
By the fact that $(\PNF,\rightarrow)$ is terminating, we know that each coset of $\Free/\NGr(Rel)$ contains at least one element of $\NF$.

Now, the essential remark is that the equivalence relation $\leftrightstar$ on $\PNF$ defines precisely the cosets of the quotient groupoid $\Free/\NGr(Rel)$.  Indeed, for each relation $u=v$ in the list $\Rel$ given at subsection~\ref{subsect-listrel}, one sees that $u \leftrightstar v$.  Reciprocally, for each reduction rule $u \Rightarrow v$, one sees that 
$u v^{-1}$ belongs to $\NGr(\Rel)$.

Therefore, assume that there exist two elements 
$g,g^\prime$ in $\NF$ such that $\pi(g) = \pi(g^\prime)$ (where $\pi:\Free \rightarrow \Free/\NGr(Rel)$ is the quotient map).  
This is equivalent to say that $g \leftrightstar g^\prime$.  Since $(\PNF,\rightarrow)$ is confluent, it is Church-Rosser.  Therefore, $g \leftrightstar g^\prime$ implies that $g$ and $g^\prime$ are joinable. But since both $g$ and $g^\prime$ are normal forms (and hence not reducible), we conclude that $g = g^\prime$. 
\end{proof}
\begin{Remark}{\it (Word problem and decidability)\ }
One could ask if the reduction system $(\PNF,\rightarrow)$ would allow us to algorithmically solve the word problem
in $\Free/\NGr(Rel)$. Equivalently, one asks if, given an element $g \in \PNF$, there exists an algorithm to decide if 
$$
g \rightstar \id.
$$
Notice that the reduction rules in $(\PNF,\rightarrow)$ assume the existence of an {\em oracle} which, given a complex constant $\alpha \in \cC$, will
answer affirmatively or negatively to the question 
$$\text{Is $\alpha = 0\; $?}$$  
Even assuming that the constants appearing in the initial path $g$ are, say, rational numbers, this oracle will eventually need to test new constants which are exp-log expressions in these initial constants, such as
$$
  e^{e^{e^{2\log(3/4)}} + e^{- 3e^{e^{10}}}} - e^{e^{e^{2\ln{5}}}} - \ln(\ln(3/2)). 
$$
The existence of an algorithm for the above oracle is strongly related to the decidability of $(\cR,\exp)$ and the known algorithms 
assume Schanuel's conjecture.   

On the positive side, using the results of \cite{MR1487790} and \cite{MR2078675} 
(see also \cite{MR1098783}, section 2.1),  one can prove the following:
{\em Assuming Schanuel's conjecture, the word problem is decidable for the groupoid
$$\Free_\cQ/\NGr(Rel)$$
where $\Free_\cQ$ denotes the free product groupoid $\Gr_{\mathrm{PSL}(2,\cQ)}  *  \PGr_{exp}$.}
\end{Remark}
\section{From normal forms to field extensions}\label{subsect-algebrotransc}
Our present goal is to prove the second part of the Main Theorem.  Many of the following constructions will be carried out for arbitrary normal forms, not necessarily satisfying the tameness property.  We shall explicitly indicate the points where this assumption will be necessary.

Given a point $p \in \cPC$, we denote by $(\Merom,\partial)$ the differential field of meromorphic germs at $p$ equipped with the usual derivation $\partial = d/dz$ with respect to some arbitrary local coordinate $z$ at $p$ (with 
constant subfield $\mathrm{Const}(\partial) = \cC$).

Given a normal form $g \in \NF$, with source point $p = \source(g)$, our next goal is to construct a sequence of field extensions
in $\Merom$ which will encode the necessary information to study the identity
$$\vphi(g) = \mathrm{id}.$$
wher, we recall that $\vphi: \NF \to \Gr_{\PSL,\Exp}$ is the mapping which associates a germ in $\Gr_{\PSL,\Exp}$ to each path in $\NF$.
\subsection{Algebraic paths and Cohen field}\label{subsect-cohenfielddef}
In this subsection, we consider field extensions defined by algebraic paths.  Let $a \in \NF$ be an algebraic path of length $n \ge 0$.  Thus, we can uniquely write
$$
a = \theta_0\,  \tp_{\alpha_1} \theta_1\, \cdots\,   \tp_{\alpha_n} \theta_n, 
$$
where each exponent $\alpha_i$ lie in $\Omega \cap \cQ \setminus \{1\}$,  $\theta_0 \in \PSL$,  $\theta_i \in T_1 \setminus \{\id\}$, for $1 \le i \le n$ and $\theta_1,\ldots,\theta_{n-1}$ are not the identity.

We consider the sequence of {\em algebraic }field extensions in $\Merom$
$$
E_n \subset E_{n-1} \subset \cdots \subset E_0 = E
$$
inductively defined as follows.  Firstly, $E_n = \cC(x_n)$ is the field defined by the identity germ $x_n = \vphi(\id)$.  Then, for each $i = 0,\ldots,n-1$, we define 
$$
E_i = E_{i+1}(x_i)
$$
where $x_i$ is a germ of solution of the algebraic equation
$$
x_i^{v} = \theta(x_{i+1})^{\, u}
$$
where we have written $\theta = \theta_{i+1}$ and $\alpha_{i+1} = u/v$ for some co-prime integers $u,v$.  Here, the branch of the $v^{th}$-root is uniquely chosen accordingly to the source/target compatibility condition determined by $a$.
We will say that the resulting field
$$E = \cC(x_0,\ldots,x_n)$$ 
is the {\em Cohen field of $a$}.

In the seminal paper \cite{MR1326133}, Cohen has studied the Cohen field for algebraic normal forms of affine type. In what follows, we shall make essential 
use of the following immediate consequence of a result in \cite{MR1326133}.
\begin{Theorem}[cf.~\cite{MR1326133},Theorem 3.2]\label{theorem-ofcohen}
Assume that $a$ is an algebraic normal form of affine type and length $n \ge 0$, with associated Cohen field $E = \cC(x_0,\ldots,x_n)$. Then, we can write
$$E = \cC(x_0,x_n)$$ 
i.e.~$x_1,\ldots,x_{n-1}$ are rational functions of $x_0$ and $x_n$.  
Moreover, if $a$ is not a rational path {\em(\footnote{Recall (see the Introduction) that an algebraic path is called {\em rational }if each power map in its basic decomposition has a exponent in $\Omega \cap \cZ$.})} then $E$ is a strict algebraic extension of $\cC(x_n)$. 
\end{Theorem}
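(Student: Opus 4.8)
The plan is to reduce the statement to Cohen's Theorem~3.2 in \cite{MR1326133}, which is precisely a result about the Cohen field of an affine-type tower, and then to read off the ``moreover'' clause by a degree count. First I would observe that $E$ depends only on the exponents $\alpha_1,\dots,\alpha_n\in\Omega\cap\cQ=\cQ_{>0}$ and on the maps $\theta_1,\dots,\theta_n$; the outer map $\theta_0$ never enters the construction of the tower. Writing each $\alpha_i=u_i/v_i$ in lowest terms, the defining relations are $x_{i-1}^{\,v_i}=(\theta_i x_i)^{u_i}$ for $1\le i\le n$. By the affine-type hypothesis $\theta_1,\dots,\theta_{n-1}$ are affine; the only map that may fail to be affine is the innermost $\theta_n\in T_1$, which can involve the involution $\ti$. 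Since $\theta_n$ is applied directly to $x_n$ and is a M\"obius map of degree one, I would absorb it by setting $\tilde x_n=\theta_n(x_n)$, so that $\cC(\tilde x_n)=\cC(x_n)$ and the bottom relation becomes the pure binomial $x_{n-1}^{\,v_n}=\tilde x_n^{\,u_n}$. After this normalization the tower $E_n\subset\cdots\subset E_0$ is precisely a Cohen tower built from the affine maps $\theta_1,\dots,\theta_{n-1}$ and the rational powers $\tp_{\alpha_i}$, so Cohen's theorem applies verbatim.

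Granting the reduction, Cohen's theorem gives the first assertion at once: $E=\cC(x_0,\tilde x_n)=\cC(x_0,x_n)$, with $x_1,\dots,x_{n-1}$ the rational functions of $x_0,x_n$ furnished by his elimination procedure (White's branch-point analysis, \cite{MR969681}).

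For the ``moreover'' part I would compute degrees through the tower. Each step adjoins a $v_i$-th root of $(\theta_i x_i)^{u_i}\in E_i$; because $\gcd(u_i,v_i)=1$ and $\theta_i x_i$ generates the same field as the coordinate $x_i$, this element is not a proper $p$-th power for any prime $p\mid v_i$, so $X^{v_i}-(\theta_i x_i)^{u_i}$ is irreducible over $E_i$ and $[E_{i-1}:E_i]=v_i$. Multiplying yields $[E:\cC(x_n)]=\prod_{i=1}^n v_i$ and, symmetrically, $[E:\cC(x_0)]=\prod_{i=1}^n u_i$. If $a$ is not a rational path then some $\alpha_i$ is a non-integer, i.e.\ $v_i\ge 2$, whence $[E:\cC(x_n)]\ge 2$ and $E$ is a proper finite extension of the base field $\cC(x_n)=\cC(z)$ generated by the source germ (the parallel count shows that strictness over $\cC(x_0)$ holds exactly when some numerator $u_i\ge 2$).

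The genuinely hard input is Cohen's collapse theorem, which I invoke rather than reprove; its proof rests on a branch-point and monodromy analysis \`a la White showing that the successive cyclic covers stack up without unexpected identifications. On the present side the only delicate points are (i) that enlarging Cohen's class to allow the innermost $\theta_n$ to be an arbitrary element of $T_1$ rather than an affine map is harmless, since $\theta_n$ is a degree-one change of coordinate at the base; and (ii) the irreducibility at each radical step underlying the degree computation, which I would confirm inductively from the coprimality $\gcd(u_i,v_i)=1$ together with the coordinate structure of each subtower---this is exactly the place where the normal-form requirement that the $\theta_i$ between consecutive powers be non-trivial is used, as it prevents a branch point from cancelling and thereby collapsing the degree.
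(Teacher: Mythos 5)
The paper gives no proof of this statement at all---it is quoted as an immediate consequence of Cohen's Theorem~3.2---so there is nothing to compare line by line; your reduction (absorbing the possibly non-affine innermost $\theta_n$ into the coordinate $\tilde x_n=\theta_n(x_n)$, which is harmless since $\cC(\tilde x_n)=\cC(x_n)$, and then invoking Cohen for the collapse $E=\cC(x_0,x_n)$) is a reasonable fleshing-out of the first assertion. One caveat on the degree count: the irreducibility of $X^{v_i}-(\theta_i x_i)^{u_i}$ must be established over the full subtower field $E_i=\cC(x_i,\dots,x_n)$, not merely over $\cC(x_i)$, since an element can become a $p$-th power after the earlier radical adjunctions; coprimality of $u_i,v_i$ alone does not settle this, and you are right that this is exactly the content of Cohen's branch-point analysis, which you legitimately defer to him.

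The genuine problem is the ``moreover'' clause. Your count gives $[E:\cC(x_n)]=\prod v_i$ and $[E:\cC(x_0)]=\prod u_i$, while ``$a$ is not a rational path'' means some denominator $v_i\ge 2$. This yields $E\supsetneq\cC(x_n)$, whereas the theorem asserts $E\supsetneq\cC(x_0)$, and these are genuinely different statements: for $a=\theta_0\,\tp_{1/2}\,\tt_1$ one has $x_0=\sqrt{x_1+1}$, hence $E=\cC(x_0,x_1)=\cC(x_0)$ even though $a$ is not rational. So strictness over $\cC(x_0)$ does not follow from your computation---your own formula shows it is equivalent to some numerator $u_i\ge 2$---and the clause as literally printed is in fact false. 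You notice the asymmetry in a parenthesis but then leave the stated claim unaddressed. You should say explicitly that what you prove is strictness over $\cC(x_n)$, note that this version suffices for the only downstream use (in Corollary~\ref{corollary-ofcohen} the relation $y=z$ forces $\cC(x_0)=\cC(x_n)=E$, so strictness over either field gives the contradiction), and flag the discrepancy with the statement rather than silently substituting the corrected version.
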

Notice that the second statement of the Theorem does not hold if $a$ is not of affine type.
\begin{Example}
Consider the algebraic normal form
$$
a = \tp_{1/2} \tt_{-1/4}\, \tp_2\, \ttheta\, \tp_2
$$
where $\theta$ is a M\" oebius map such that $\theta(0) = -1/2$ and $\theta(\infty)=1/2$.  Then, we can write $E_a = \cC(x,y,z)$ 
where $x,y,z$ satisfy the relations
$$
z^2 = -\frac{x^2}{(x^2 + 1)^2} \quad\text{ and }\quad y = z^2.
$$
Obviously,  $\cC(x,z)$ is not a strict algebraic extension of $\cC(x)$.  
\end{Example}
From now on, the elements $y = x_n$ and $z = \theta_0(x_0)$ (where $\theta_0$ is the M\" oebius part of $a$) will be called, respectively, the {\em tail }and the
{\em head }elements of the Cohen field of $a$.
\begin{Corollary}\label{corollary-ofcohen}
Assume that $a$ is a non-identity algebraic normal form of affine type, with Cohen field $E$.  Then, the head and tail elements $z,y \in E$ cannot satisfy the relation 
$$
\frac{y}{z} = 1.
$$ 
\end{Corollary}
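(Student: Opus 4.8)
The plan is to unwind what the relation $y/z = 1$ actually asserts about the germ $\vphi(a)$, and then split into the two cases furnished by the dichotomy of Theorem~\ref{theorem-ofcohen}. Tracing through the inductive construction of the Cohen field one finds $x_0 = \vphi(\tp_{\alpha_1}\theta_1\cdots \tp_{\alpha_n}\theta_n)(x_n)$, so that the head element is $z = \theta_0(x_0) = \vphi(a)(x_n)$, while the tail element $y = x_n$ is the coordinate germ itself. Hence $y = z$ is equivalent to $\vphi(a) = \id$, and the Corollary is exactly the statement that a non-identity affine-type algebraic normal form cannot define the identity germ.

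First I would treat the case where $a$ is a \emph{rational} path. Then every exponent satisfies $\alpha_i \in \Omega \cap \cZ = \cZ_{>0}$ and $\alpha_i \ne 1$, so $\alpha_i \ge 2$, and each power germ $\tp_{\alpha_i}$ is an honest rational self-map of $\cPC$ of degree $\alpha_i$. Since degree is multiplicative under composition and the M\"obius factors $\theta_i$ all have degree $1$, the element $z = \vphi(a)(x_n)$ is a rational function of $x_n$ of degree $\prod_{i=1}^{n}\alpha_i$. For $n \ge 1$ this degree is at least $2$, so $z$ cannot equal the degree-one function $y = x_n$. For $n = 0$ the path degenerates to $a = \theta_0 \in \PSL \setminus \{\id\}$, and a non-identity M\"obius map does not fix the coordinate, so once more $z \ne y$.

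Next I would treat the case where $a$ is \emph{not} rational, which forces $n \ge 1$. By the first statement of Theorem~\ref{theorem-ofcohen} we have $E = \cC(x_0,x_n)$, and by the second statement $E$ is a strict algebraic extension of $\cC(x_0)$, i.e.\ $[E : \cC(x_0)] > 1$. Suppose, for contradiction, that $y = z$, i.e.\ $x_n = \theta_0(x_0)$. Because $\theta_0$ is an invertible M\"obius transformation, $\cC(x_n) = \cC(\theta_0(x_0)) = \cC(x_0)$; in particular $x_n \in \cC(x_0)$, and therefore $E = \cC(x_0,x_n) = \cC(x_0)$. This forces $[E:\cC(x_0)] = 1$, contradicting strictness, so $y \ne z$.

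The genuinely hard content is entirely outsourced to Cohen's Theorem~\ref{theorem-ofcohen}: the fact that affine-type non-rational paths yield a strict extension over $\cC(x_0)$ is precisely what breaks down without the affine-type hypothesis (cf.\ the preceding Example, where $y = z^2$ while $\cC(x,z)$ is not strict over $\cC(x)$). Granting that input, the remaining steps are elementary — a multiplicativity-of-degree count in the rational case and the invariance of $\cC(x_0)$ under M\"obius substitution in the non-rational case. The only points requiring care are the bookkeeping that identifies $z$ with $\vphi(a)(x_n)$ along the tower, and the isolated handling of $n = 0$, where the degree count degenerates and one must instead invoke directly that $a = \theta_0 \ne \id$.
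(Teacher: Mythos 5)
Your proof is correct and follows essentially the same route as the paper's: both reduce the claim to the rational/non-rational dichotomy of Theorem~\ref{theorem-ofcohen}, settle the rational case by a multiplicativity-of-degree count (the paper phrases this via the branched covering $C \to \cPC$ of degree $|u_1\cdots u_n|\ge 2$ against the degree-one map $y = x_n$), and settle the non-rational case by the strictness of the extension $E/\cC(x_0)$. The only cosmetic differences are that you fold the length-zero M\"obius case into the rational case instead of treating it as a separate third case, and that you write out explicitly the step the paper dismisses as ``an immediate consequence'' of Theorem~\ref{theorem-ofcohen}.
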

\begin{proof}
We consider the following three possible cases:
\begin{itemize}
\item[(i)] $a$ is not a rational path.
\item[(ii)] $a$ is a rational path of length $n \ge 1$. 
\item[(iii)] $a = \theta$ is a M\" oebius map.
\end{itemize} 
In the case (i), the result is an immediate consequence of Theorem~\ref{theorem-ofcohen}.  In case (ii), we write the the decomposition of $a$ as
$$
a = \theta_0\, \tp_{u_1}\, \theta_1\, \cdots\, p_{u_n}\, \theta_n
$$ 
for some integers $u_1,\ldots,u_n \in \Omega \cap \cZ$.  It follows that $E$ is the function field of an algebraic curve $C \subset (\cPC)^{n+1}$ which defined by the vanishing 
of  the ideal $\maxn \subset \cC[X_0,\ldots,X_n]$ generated by the equations 
$$
x_{n-1} = \theta_n(x_n)^{u_n}\, ,\, \ldots\, ,\, x_0 = \theta_1(x_1)^{u_1}.
$$
(after appropriately eliminating the denominators in the expression of the M\" oebius maps).  In particular,  the rational map $y = x_n$ defines a degree 1 unbranched covering $C \to \cPC$ while the
rational map $z = \theta(x_0)$ defines a branched covering $C \to \cPC$ of degree $u = |u_1 \cdots u_n| \ge 2$.  Therefore, $y/z$ is a non-constant rational map on $C$.

Finally, in the case (iii), we have the identity $x_0 = x_n = y$.  Therefore we can write 
$$\frac{z}{y} = \frac{\theta(y)}{y}$$
and the right hand side is non-constant because $\theta \ne \id$.  This concludes the proof.
\end{proof}
\begin{Remark}\label{remark-specialusefulcase}
The Example \ref{example-dihedralT2} shows that Corollary~\ref{corollary-ofcohen} is false if we drop the condition that $a$ is of affine type. 
\end{Remark}
\subsection{The field associated to $g \in \NF$}\label{subsect-fieldofg}
We will now generalize the construction of the previous section to arbitrary normal forms.  Assume that $g \in \NF$ has an
algebro-transcendental decomposition
$$
g = a_0\, \gamma_1\, a_1\, \cdots\, \gamma_m\, a_m, \quad m \ge 0
$$
Then, we inductively construct a chain of subfields in $\Merom$,
$$
L_m \subset F_m \subset \cdots \subset L_0 \subset F_0 = F.
$$
as follows.  Firstly, we define $L_m = \cC(y_m)$ to be the field defined over $\cC$ by the identity germ $y_m = \vphi(\id)$.  Then, we put: 
\begin{itemize}
\item[(1)] $F_i = L_i(z_i)$, where $z_i \in \Merom$ is the germ defined 
$$
z_i = \vphi \big( a_i\, \gamma_{i+1}\, a_{i+1}\,\cdots\, \gamma_m\, a_m \big), \quad i = 0,\ldots,m
$$
\item[(2)] $L_i = F_{i+1}(y_i)$, where $y_i \in \Merom$ is the germ defined 
$$
y_i = \vphi \big(\gamma_{i+1}\, a_{i+1}\,\cdots\, \gamma_m\, a_m \big), \quad i = 0,\ldots,m-1
$$
\end{itemize}
We will say that $F = \cC(y_0,z_0,\ldots,y_m,z_m)$ is the {\em field associated to $g$}.

Notice that each field extension $F_{i+1} \subset L_i$ (described in item (2)) is obtained by adjoining to $F_{i+1}$ a (germ of) nonzero solution $y$ to one of the 
following differential equations
$$
\frac{\partial(y)}{y} =  \partial(z), \quad \partial(y) = \frac{\partial(z)}{z} \quad \text{Êor }\quad \frac{\partial(y)}{y} = \alpha \frac{\partial(z)}{z}
$$ 
where we have written write $z = z_{i+1}$.  These equations correspond respectively to the case where $\gamma_{i+1}$ is given by $\te$, $\tl$ or $\tp_\alpha$, for 
some $\alpha \in \Omega \setminus \cQ$.

On the other hand, each extension $L_i \subset F_i$ (described in item (1)) is algebraic, obtained by adjoining a germ of solution $z_i$ of 
a polynomial equation {\em with coefficients in $\cC(y_i)$}.  

More precisely, if we consider the Cohen field $E$ associated to the maximal algebraic path $a_i$ (as defined in subsection~\ref{subsect-cohenfielddef}), and write $E = \cC(x_0,\ldots,x_n)$, then we can embed $\cC(y_i,z_i)$ into $E$ by setting 
$$
y = x_n, \quad \text{Êand }\quad z = \theta_0(x_0).
$$
In particular, the first part of Theorem~\ref{theorem-ofcohen} can be reformulated as follows: If $g = a$ is an algebraic path of affine type then the fields
$F$ and $E$ coincide.
\begin{Remark}
It follows from the above discussion that the differential field $(F,\partial)$ associated to a normal form is a Liouvillian extension of $(\cC(x),\partial)$ (see \cite{MR1960772}, section 1.5). Furthermore, one always have
$$
\mathrm{trdeg}_\cC\, F \le m+1
$$
where $m = \het(g)$.
\end{Remark}
The second part of the Main Theorem will be a direct consequence of Corollary~\ref{corollary-ofcohen} and the following result.
\begin{Theorem}\label{theorem-allsimple} 
Let $g \in \NF_\tame$ be a tame normal form of height $m \ge 1$ and associated field $F = \cC(y_0,z_0,\ldots,y_m,z_m)$.  Then
$$
\mathrm{trdeg}_\cC\, F = m+1.
$$
Moreover, $\{y_0,\ldots,y_m\}$ forms a transcendence basis for $F/\cC$. 
\end{Theorem}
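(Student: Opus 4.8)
The plan is to prove the statement by downward induction on $i$, showing that for every $0 \le i \le m$ the field $F_i$ has transcendence degree $m-i+1$ over $\cC$ with transcendence basis $\{y_i,\ldots,y_m\}$; the case $i=0$ is the assertion of the Theorem. The base case $i=m$ is immediate: $L_m=\cC(y_m)$ with $y_m=\vphi(\id)$ the identity germ, which is transcendental over $\cC$, and $z_m=\vphi(a_m)=a_m(y_m)$ is algebraic over $\cC(y_m)$, so $\transdeg_\cC F_m=1$ with basis $\{y_m\}$. For the inductive step I note that, since $z_i=a_i(y_i)$ is algebraic over $\cC(y_i)\subset L_i$, the extension $L_i\subset F_i$ does not change the transcendence degree. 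Everything therefore reduces to a single claim: granting the inductive hypothesis on $F_{i+1}$, the germ $y_i=\gamma_{i+1}(z_{i+1})$ is transcendental over $F_{i+1}$.

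To analyse this one step I would work in the differential field $(\Merom,\partial)$, whose field of constants is $\cC$, and invoke the classical structure theorems for Liouvillian extensions (cf.~\cite{MR1960772}). Writing $z=z_{i+1}\in F_{i+1}$, the germ $y_i$ is a solution of $\partial y/y=\partial z$, of $\partial y=\partial z/z$, or of $\partial y/y=\alpha\,\partial z/z$, according as $\gamma_{i+1}$ equals $\te$, $\tl$, or $\tp_\alpha$ with $\alpha\in\Omega\setminus\cQ$. These theorems assert that $y_i$ is transcendental over $F_{i+1}$ \emph{unless} a collapse occurs: $e^{nz}\in F_{i+1}$ for some $n\in\cZ\setminus\{0\}$ (exponential case), $\log z\in F_{i+1}$ (logarithm case), or $z^{n\alpha}\in F_{i+1}$ for some $n\in\cZ\setminus\{0\}$ (power case). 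Thus the whole content of the inductive step is the exclusion of these three collapses.

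The power collapse is the cheapest. Since $z=a_{i+1}(y_{i+1})$ is a non-constant algebraic function of $y_{i+1}$, I would continue it analytically to a place where it acquires a zero or pole of some order $k\ne 0$ \emph{while the remaining generators of $F_{i+1}$ stay regular} (such a place exists by the algebraic independence of $y_{i+1},\ldots,y_m$ coming from the inductive hypothesis, at least as long as no irrational power $\tp$ is adjacent). There $z^{n\alpha}$ has local branch exponent $n\alpha k$, which is irrational because $\alpha\notin\cQ$, whereas every element of $F_{i+1}$ has rational branch exponent at that place — a contradiction. The exponential and logarithmic collapses, by contrast, can genuinely happen, but only through a cancellation of $\gamma_{i+1}$ against the neighbouring generator $\gamma_{i+2}$ one level below: for instance $\tl\,\ts_c\,\te$ would produce $y_i=\log c+z_{i+2}\in F_{i+1}$, and $\te\,\ts_\alpha\,\tl$ would produce a power of $z_{i+2}$. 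That these are the only dangerous configurations is again a residue computation: a collapse forces $\partial z$ (or $n\,\partial z$) to be an exact logarithmic derivative in $F_{i+1}$, which by the residue obstruction can hold only if $z$ differs from a lower logarithm or exponential by an affine change of variable.

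Here is where the tameness hypothesis does the work, and this step I expect to be the main obstacle. The collapse-producing configurations are precisely those in which the maximal algebraic subpath $a_{i+1}$ separating $\gamma_{i+1}$ from $\gamma_{i+2}$ is affine and sits in one of the segments $[\te,\tl]$, $[\tl,\te]$, $[\tl,\tp]$, $[\tp,\te]$, $[\tp,\tp]$. I would treat these segment types one at a time, combining three ingredients: (i) tameness forces $a_{i+1}$ to be of affine type in exactly these segments; (ii) the transversal structure of Lemma~\ref{lemma-nfPSL} — an element of $\trans_0$ following $\te$ is a scaling $\ts_\rho$ or $\ts_\rho\ti\tt_b$, an element of $\trans_1$ following $\tl$ is a translation or involves $\ti$ — together with condition~(v) of Definition~\ref{def-nf} and the convention that every subpath $\te\,\ts_\alpha\,\tl$ is regrouped as $\tp_\alpha$, pins an affine $a_{i+1}$ down to a rigid non-cancelling shape (a nonzero translation, or else a bare scaling that the regrouping has already absorbed into an irrational power); and (iii) Corollary~\ref{corollary-ofcohen}, which forbids the head and tail of the Cohen field of an affine-type piece from satisfying $y/z=1$, killing the residual degenerate identifications. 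The delicate part throughout is the bookkeeping that matches each analytic collapse to a forbidden combinatorial pattern, and making the residue/branch-exponent argument rigorous at the level of germs — in particular securing a place where $z_{i+1}$ is singular but the adjacent irrational powers are regular, which is itself where tameness enters for the $\tp$-bounded segments. Once this case analysis is complete, all three collapses are excluded, $y_i$ is transcendental over $F_{i+1}$, and the induction closes.
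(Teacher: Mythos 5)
Your overall architecture matches the paper's: induction on the height, reduction of the inductive step to excluding three ``collapse'' conditions (your collapse conditions are exactly the paper's twisted equations (\ref{firsttwisteq})--(\ref{thirdtwisteq}), since the classical Liouvillian structure theorems you invoke are proved by the same minimal-polynomial computation the paper carries out explicitly), and a final appeal to tameness plus Corollary~\ref{corollary-ofcohen}. The gap is in the middle step, which is where essentially all the content of the theorem lives. You assert that an exponential or logarithmic collapse ``can hold only if $z$ differs from a lower logarithm or exponential by an affine change of variable,'' attributing this to ``a residue computation.'' But the collapse condition is $\partial f/f = n\,\partial z$ (or $\partial f = \partial z/z$) for some $f$ in the field $F_{i+1}$, which is a tower of $m-i$ transcendental extensions; a priori $f$ can be an arbitrary rational combination of generators spread across \emph{all} levels of the tower, and nothing in a one-place residue computation localizes the relation to the level adjacent to $\gamma_{i+1}$. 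The paper's Proposition~\ref{prop-resonantmonomial} and Corollary~\ref{corollary-resonantmonomialstrans} exist precisely to perform this localization, and they rest on the Ax-type Lemma~\ref{lemma-lindepforms} (linear dependence over $\cC$ of $m+1$ closed $1$-forms annihilating $\partial$ in a field of transcendence degree $\le m+1$), followed by Lemma~\ref{lemma-monomial} to convert the dependence into a power or linear resonance relation, followed by the linear independence of $dy_0,\ldots,dy_m$ (the inductive hypothesis) to split the global resonance into a single-level one. None of this is replaceable by the local argument you sketch, and you give no substitute for it.

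Two further points. First, your branch-exponent argument for the power collapse needs a place where $z_{i+1}$ is singular while ``every element of $F_{i+1}$ has rational branch exponent''; but the generators of $F_{i+1}$ at lower levels are logarithms, exponentials and irrational powers of one another, so at the places you propose they generically have logarithmic or irrational ramification, not rational branch exponents --- the same global control you are missing for the exp/log cases is needed here too. Second, be careful with the claim that the transversal structure alone pins down a ``rigid non-cancelling shape'': Example~\ref{example-dihedralT2} shows that a perfectly legal non-tame normal form (with non-affine maximal algebraic subpaths in the dangerous segments) \emph{does} define the identity, so no purely combinatorial rigidity can close the argument; one must actually reach the configuration of Corollary~\ref{corollary-ofcohen} (a modified affine-type algebraic path whose head and tail satisfy $y/z=1$), which is what the paper's proofs of Lemmas~\ref{lemma-twisted1}--\ref{lemma-twisted3} do after the localization. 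As written, your proposal correctly names all the ingredients but defers the one step --- propagating a resonance through the whole tower and pinning it to a single affine-type subpath --- that the theorem is really about.
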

\subsection{Three Lemmas on twisted equations}\label{subsect-twistedequations}
The proof of Theorem~\ref{theorem-allsimple} uses induction on the height of a normal form and  is essentially based on the three Lemmas stated below,
which treat special types of differential equation in $(F,\partial)$.  
For future reference, the equations (\ref{firsttwisteq}), (\ref{secondtwisteq}) and (\ref{thirdtwisteq}) below will be called, respectively,  the {\em first}, {\em second} and {\em third}  twisted equations.

To fix the notation, we consider a  tame normal form $g \in \NF_\tame$ of height $m \ge 0$ with algebro-transcendental decomposition
$$
g = a_0\, \gamma_1\, a_1\, \cdots\, \gamma_m\, a_m, \quad m \ge 0
$$
and associated differential field $F = \cC(y_0,z_0,\ldots,y_m,z_m)$  (equipped with the usual derivation $\partial = d/dz$ induced from $\Merom$).

In the following statements, we use the following definition.  Given a germ $\gamma \in \{\te, \tl, \tp_\alpha \colon \alpha \in \Omega \setminus \cQ\}$, 
the {\em $\gamma$-augmention of $g$ } (or, shortly, the $\gamma$-augmented path) is the path 
$$ 
g_\aug = \gamma \, a_0\, \gamma_1\, a_1\, \cdots\, \gamma_n\, a_n
$$
obtained by adjoining $\gamma$ to the left of $g$ (with the obvious compatibility condition that the source of the germ $\gamma$ coincides with the target of the germ $\theta_0$).   

Note that $g_\aug$ is not necessarily in normal form.  We will say that $g_\aug$ is a {\em nice augmentation }of $g$ if $g_\aug$ lies in $\NF_\tame$ and moreover the right hand side of the above displayed equation is precisely the algebro-transcendental decomposition of $g_\aug$.
 \begin{Lemma}\label{lemma-twisted1}
Assume $\{y_0,\ldots,y_m\}$ is a transcendence basis for $F/\cC$ and suppose that 
the {\em $\te$-augmented path}
$$
g_\aug = \te\, \theta_0\, \gamma_1\, \theta_1\, \cdots\, \gamma_n\, \theta_n
$$
is a nice augmentation of $g$.  Let $f \in F$ be a nonzero solution of the equation
\begin{equation}\label{firsttwisteq}
\Big( \partial - \mu \partial(z_0) \Big) f = 0
\end{equation}
for some $\mu \in \cC$.  Then, necessarily $\mu = 0$ and $f \in \cC$. 
\end{Lemma}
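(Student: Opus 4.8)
The plan is to reduce everything to forcing $\mu = 0$: since $F \subset \Merom$ has constant field $\Const(\partial)\cap F = \cC$, once we know $\mu = 0$ the equation becomes $\partial f = 0$, whence $f \in \cC$. So the entire content is to rule out $\mu \neq 0$, i.e.\ to show that $z_0 = \vphi(g)$ is not the logarithmic primitive of any nonzero element of $F$. I would argue by induction on the height $m = \het(g)$, the hypothesis that $\{y_0,\ldots,y_m\}$ is a transcendence basis guaranteeing that $F$ is algebraic over $\cC(y_0,\ldots,y_m)$ and, more usefully, that $\transdeg_\cC F_1 = m$ with $\{y_1,\ldots,y_m\}$ a transcendence basis of the field $F_1$ associated to the height-$(m-1)$ tail $a_1\gamma_2\cdots a_m$.

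First I would set up the differential tower $F_1 \subset F_1(y_0) \subset F$ coming from the algebro-transcendental decomposition. Here $y_0 = \vphi(\gamma_1 a_1\cdots a_m)$ is transcendental over $F_1$ and $z_0$ is algebraic over $\cC(y_0)$, with $E = \cC(y_0,z_0)$ the Cohen field of $a_0$ (governed by Theorem~\ref{theorem-ofcohen}). The defining equations in subsection~\ref{subsect-fieldofg} show that $y_0$ is \emph{exponential} over $F_1$ (i.e.\ $\partial y_0/y_0 \in F_1$) when $\gamma_1 \in \{\te,\tp_\alpha\}$, and \emph{primitive} over $F_1$ (i.e.\ $\partial y_0 \in F_1$) when $\gamma_1 = \tl$; in the latter case the niceness of the $\te$-augmentation places $a_0$ in a $[\te,\tl]$-segment, forcing $a_0$ to be of affine type so that Corollary~\ref{corollary-ofcohen} applies. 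In all cases $F_1(y_0)$ is a differential subfield, $\transdeg_{F_1}F = 1$, and $F$ is the function field of the Cohen curve $C_0$ base-changed to $F_1$.

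The core step is a residue comparison in the relative differentials $\Omega_{F/F_1}$. Writing $\partial = \partial^{F_1} + (\partial y_0)\,\partial^{\mathrm{rel}}_{y_0}$, where $\partial^{F_1}$ extends $\partial|_{F_1}$ and annihilates $E$, and using $\partial z_0 = (\partial y_0)(dz_0/dy_0)$, the equation $\partial f/f = \mu\,\partial z_0$ rearranges to
\[
\frac{df}{f} \;=\; \mu\,dz_0 \;-\; \frac{\partial^{F_1}f}{f\,\partial y_0}\,dy_0 \quad\in\ \Omega_{F/F_1}.
\]
Now $\operatorname{res}_P(df/f) = \ord_P(f) \in \cZ$ at every place $P$ of $C_0/F_1$, while the exact differential $\mu\,dz_0$ is residue-free; so the residues of $f$ are carried entirely by the last term. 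Showing that this correction term is residue-free forces $\ord_P(f)=0$ for all $P$, hence $f \in F_1$. Once $f \in F_1$ the equation reads $\partial f/f = \mu(\partial y_0)(dz_0/dy_0)$: in the exponential case the right side genuinely involves $y_0$ (as $y_0\,dz_0/dy_0 \notin \cC$, since $z_0$ is algebraic, not logarithmic, over $\cC(y_0)$), so membership in $F_1$ forces $\mu = 0$ immediately; in the primitive case it collapses to an equation of the form $\partial f/f = (\mu c)\,\partial z_1/z_1$ entirely inside $F_1$, to which the induction hypothesis (height $m-1$) applies. The base case $m=0$ is the same residue argument on the single curve $C_0$ over $\cC$: $df/f$ has only simple poles with integer residues, whereas $\mu\,dz_0$ is exact and so has a pole of order $\ge 2$ at each pole of the nonconstant germ $z_0 = \vphi(g)$ and no residues, forcing $\mu = 0$.

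The hard part will be controlling the correction term $\partial^{F_1}f/f$, that is, proving residue-freeness at those places of $C_0$ that are \emph{not} defined over $\cC$. Because $\partial$ does not fix $F_1$, one cannot simply read residues off an honest differential on $C_0/\cC$; at a $\cC$-rational place a uniformizer lies in $E$ and $\partial^{F_1}$ preserves order, but the general case requires the local structure theory of logarithmic derivatives in exponential and primitive extensions (Rosenlicht-type lemmas, as in \cite{MR1960772}) to guarantee no spurious residues arise. This is exactly where tameness is used: the affine-type conclusion for $a_0$ together with Corollary~\ref{corollary-ofcohen}, which forbids the degenerate relation $z_0/y_0 = 1$, blocks the cancellations that would otherwise let $f$ acquire poles along the fibres, and it is the reason to phrase the descent through these lemmas rather than by a bare pole count.
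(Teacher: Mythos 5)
Your strategy --- a Rosenlicht/Liouville-style induction up the tower $F_1 \subset F_1(y_0) \subset F$ using residues of relative differentials --- is genuinely different from the argument in the paper, which works globally: it applies Wilkie's linear-dependence lemma for closed $1$-forms annihilating $\partial$ (Lemma~\ref{lemma-lindepforms}) to all $m+1$ levels of the tower at once, extracts a power or linear resonance relation between $y_j$ and $z_j$ at \emph{some} intermediate index $j$ (Proposition~\ref{prop-resonantmonomial} and Corollary~\ref{corollary-resonantmonomialstrans}), and then contradicts Corollary~\ref{corollary-ofcohen} by applying it to a modified algebraic path $a^*$ built from that resonance. Your outline, however, has two genuine gaps. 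First, the central descent step --- that the ``correction term'' $\partial^{F_1}f/f$ contributes no residues, so that $\ord_P(f)=0$ at every place of $C_0/F_1$ and hence $f\in F_1$ --- is exactly the hard content of the lemma, and you explicitly defer it to unspecified ``Rosenlicht-type lemmas'' without carrying it out. This is not a routine citation: the delicate case is precisely when $f$ itself enters a multiplicative relation with the tower variables (in the paper's proof this appears as the monomial $m=f^w\prod y_{j-1}^{u_{j-1}}z_j^{v_j}$ with $w\neq 0$, which requires a second round of the resonance argument), and your dichotomy ``$f\in F_1$ or the residues give a contradiction'' does not obviously account for it.

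Second, the induction does not close as you have set it up. In the primitive case ($\gamma_1=\tl$) your descent produces an equation of the form $\partial f/f=(\mu c)\,\partial z_1/z_1$ inside $F_1$; this is an instance of the \emph{second} twisted equation (Lemma~\ref{lemma-twisted2}), not of the statement you are inducting on, so the inductive hypothesis must be the conjunction of all three lemmas. Worse, Lemma~\ref{lemma-twisted2} requires $\mu\notin\cQ^*$, and your descent gives no control on whether $\mu c$ is rational; when it is a nonzero rational the equation \emph{does} have solutions (powers of $z_1$), and ruling out that branch is precisely where the paper needs the resonance analysis at intermediate levels $j$ together with Corollary~\ref{corollary-ofcohen} applied to the modified paths $a^*=\ts_{-v/u}\tt_{-c/v}a$ and $a^*=\ts_{c^{1/u}}\tp_{v/u}a$. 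Your appeal to Corollary~\ref{corollary-ofcohen} as something that ``blocks the cancellations'' gestures at the right ingredient but does not supply the mechanism: you would need to show how a surviving resonance at level $j$ converts into the forbidden identity $y^*/z^*=1$ for an affine-type path, and that conversion (together with the tameness hypothesis guaranteeing the relevant $a_j$ is of affine type) is the heart of the paper's proof. The base case $m=0$ and the final step once $f\in F_1$ is known are fine.
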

\begin{Lemma}\label{lemma-twisted2}
Assume $\{y_0,\ldots,y_m\}$ is a transcendence basis for $F/\cC$ and suppose that 
the {\em $\tp$-augmented path}
$$
g_\aug = \tp_\alpha\, \theta_0\, \gamma_1\, \theta_1\, \cdots\, \gamma_n\, \theta_n, \quad\text{Êfor some $\alpha \in \Omega \setminus \{1\}$}
$$
is a nice augmentation of $g$.
Let $f \in F$ be a nonzero 
solution of the equation
\begin{equation}\label{secondtwisteq}
\Big( \partial - \mu \frac{\partial(z_0)}{z_0} \Big) f = 0
\end{equation}
for some $\mu \in \cC \setminus \cQ^*$.   Then, $\mu = 0$ and  
$f \in \cC$.
\end{Lemma}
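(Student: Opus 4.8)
The plan is to prove that every nonzero solution forces $\mu\in\cQ$; since $\mu\in\cC\setminus\cQ^*$ this leaves only $\mu=0$, and then $\partial f=0$ gives $f\in\Const(\partial)=\cC$. I read the equation $\partial(f)/f=\mu\,\partial(z_0)/z_0$ as an identity of logarithmic derivatives on the top layer of the tower. Let $E=\cC(y_0,z_0)$ be the Cohen field of $a_0$. Since $y_0$ is transcendental over $F_1$ and $E\cap F_1=\cC$ (as $\cC$ is algebraically closed), $F$ is the compositum $E\otimes_\cC F_1$, i.e. the function field of the curve $E$ base-changed to $F_1$. Let $D=d/dy_0$ be the unique derivation of $F$ trivial on $F_1$ with $D(y_0)=1$, and put $\partial_1=\partial-y_0'\,D$. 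Differentiating the algebraic relation $z_0=\vphi(a_0)(y_0)$ shows $\partial_1(y_0)=\partial_1(z_0)=0$, so $\partial_1=\mathrm{id}_E\otimes\partial|_{F_1}$ and $\partial(z_0)/z_0=y_0'\cdot D(z_0)/z_0$.

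The technical heart is a regularity statement: at every place $P$ of $F/F_1$ lying over a $\cC$-point $P_0$ of the curve $E$, the quantity $\partial_1(f)/f$ is regular. Indeed a uniformizer $t$ at $P$ can be chosen inside $E$, so $\partial_1(t)=0$; writing $f=t^{k}u$ with $u$ a $P$-unit gives $\partial_1(f)/f=\partial_1(u)/u\in\vO_P$, because $\partial_1$ preserves $\vO_P$. The hypothesis then reads, as differentials on $E\otimes_\cC F_1$,
\[
d\log f-\mu\,d\log z_0=-\frac{\partial_1(f)}{f}\cdot\frac{dy_0}{y_0'}.
\]
I take residues at a place $P$ over a point $P_0$ at which $z_0$ has a zero or a pole and at which $dy_0/y_0'$ is regular; the latter holds as soon as $y_0$ is finite (and nonzero when $\gamma_1\in\{\te,\tp\}$), using $y_0'\in F_1^{*}$ for $\gamma_1=\tl$ and $y_0'=y_0\cdot(\text{unit of }F_1)$ otherwise. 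Then the right-hand side has no pole, so $\Res_P=0$, while the left-hand side contributes $\ord_P(f)-\mu\,\ord_P(z_0)$. Hence $\mu=\ord_P(f)/\ord_P(z_0)\in\cQ$. At height $m=0$ one has $y_0=z$ and $\partial=D$, so this is simply the residue theorem on $E/\cC$ and needs no side conditions on $P$.

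The main obstacle is guaranteeing a suitable place $P$ — equivalently, that $z_0$ is not a constant times a power $y_0^{r}$; for such a monomial $\operatorname{div}(z_0)$ and $\operatorname{div}(y_0)$ are proportional and every zero or pole of $z_0$ lies over $y_0\in\{0,\infty\}$. When $\gamma_1=\tl$ this is harmless, since $dy_0/y_0'$ stays regular at the finite point $y_0=0$, which is already a zero or pole of $z_0$. The delicate cases are $\gamma_1\in\{\te,\tp\}$, i.e. $a_0$ in a $[\tp,\te]$ or $[\tp,\tp]$ segment, where tameness forces $a_0$ to be of affine type. Here the hypothesis that $g_\aug$ is a \emph{nice} augmentation is decisive: since $\tp_\alpha$ ends in $\tl$ and the following transcendental germ begins with $\te$, the junction $\tl\,a_0\,\te$ must remain irreducible in the normal form, which excludes the purely scaling/power shapes of $a_0$ that would make $z_0$ a monomial and forces a genuine translation into the head of $a_0$; this produces a zero or pole of $z_0$ at a finite nonzero value of $y_0$, exactly the place required. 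In any residual monomial configuration the regularity argument still forces $f=c\,y_0^{N}$ with $c\in F_1$, and the equation reduces to a first or second twisted equation for the height-$(m-1)$ subpath with $z_1$ in place of $z_0$, to which the corresponding Lemma applies by induction on the height; reconciling the subcase in which the reduced coefficient is a nonzero rational is the subtle point, and is handled through Cohen's structural results (Theorem~\ref{theorem-ofcohen} and Corollary~\ref{corollary-ofcohen}) together with the transcendence-basis hypothesis. In every case $\mu\in\cQ$, and $\mu\notin\cQ^*$ then forces $\mu=0$ and $f\in\cC$.
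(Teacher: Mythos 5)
Your residue-theoretic route is genuinely different from the one the paper takes: instead of applying the Ax-type statement on closed $1$-forms (Lemma~\ref{lemma-lindepforms}) to the whole tower $F=\cC(y_0,z_0,\ldots,y_m,z_m)$ in order to extract a power or linear resonance relation in some intermediate field $F_j$ (Proposition~\ref{prop-resonantmonomial} and Corollary~\ref{corollary-resonantmonomialstrans}), you work only on the top layer, viewing $F$ as the function field of the curve $\cC(y_0,z_0)$ base-changed to $F_1$ and taking a residue at a single place. The identity $d\log f-\mu\,d\log z_0=-\frac{\partial_1(f)}{f}\,\frac{dy_0}{y_0'}$ and the regularity of $\partial_1(f)/f$ are correct, and where the argument applies it gives the clean conclusion $\mu=\ord_P(f)/\ord_P(z_0)\in\cQ$. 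But there are two genuine gaps. First, the existence of the required place $P$ is not established. For $\gamma_1=\tl$ your justification is that ``$y_0=0$ is already a zero or pole of $z_0$'', which is false in general (take $a_0=\tt_b$, so $z_0=y_0+b$ with $b\ne 0$); moreover $[\tp,\tl]$ is not one of the segments where tameness forces $a_0$ to be of affine type, so $z_0$ lives on a possibly complicated Cohen curve and one must actually rule out that all of its zeros and poles sit over $y_0=\infty$. For $\gamma_1\in\{\te,\tp\}$ you need a zero or pole of $z_0$ at a place where $y_0$ is finite \emph{and nonzero}; a nonzero translation in the head of $a_0$ gives a zero of $z_0$ at $x_0=-b$, but nothing you say excludes that the corresponding value of $y_0=x_n$ is $0$ (the chain of power maps and translations linking $x_0$ to $x_n$ can send $-b$ to $0$), in which case $dy_0/(cy_0)$ contributes its own residue and the computation no longer yields a rational number.

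Second, and more seriously, the ``residual monomial configuration'' paragraph is not a proof: the case in which the residue computation degenerates and one is left with a relation whose exponent is a nonzero rational is precisely the mathematical content of the lemma. This is where the paper does its real work: it uses the resonance relation $y_j^{u}z_j^{v}\in\cC$ (or $uy_j+vz_j\in\cC$) produced by Corollary~\ref{corollary-resonantmonomialstrans}, forms the modified algebraic path $a^*$ (e.g.\ $\ts_{c^{1/u}}\tp_{v/u}\,a$), checks from the normal-form conditions that $a^*\ne\id$ and is still of affine type, and then contradicts Corollary~\ref{corollary-ofcohen}. Your sentence asserting that this subcase ``is handled through Cohen's structural results together with the transcendence-basis hypothesis'' states the conclusion without supplying the argument; in addition, the induction on height you invoke is not set up anywhere (the three lemmas are not stated for truncated paths, you would need to verify that the $\gamma_1$-augmentation of $a_1\gamma_2\cdots\gamma_m a_m$ is again a nice augmentation, and the reduced equation does not fall under the hypotheses of any of the three lemmas precisely when its coefficient is a nonzero rational). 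Until these two points are filled in, the proof is incomplete.
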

\begin{Lemma}\label{lemma-twisted3}
Assume $\{y_0,\ldots,y_m\}$ is a transcendence basis for $F/\cC$ and suppose that 
the
{\em $\tl$-augmented path}
$$
g_\aug = \tl\, \theta_0\, \gamma_1\, \theta_1\, \cdots\, \gamma_n\, \theta_n
$$
is a nice augmentation of $g$.  
Let $f \in F$ be a nonzero solution of the equation 
\begin{equation}\label{thirdtwisteq}
\partial f = c\, \frac{\partial\big(z_0\big)}{z_0}
\end{equation}
for some $c \in \cC$.  Then, necessarily $c = 0$ and $f \in \cC$. 
\end{Lemma}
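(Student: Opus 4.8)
The plan is to prove Lemma~\ref{lemma-twisted3} (together with its two companions) by induction on the height $m=\het(g)$, the key geometric object being the Cohen field $E=\cC(x_0,\ldots,x_n)$ of the leading maximal algebraic subpath $a_0$, whose model is a curve $C$ over $\cC$. The decisive observation is that $E$ is generated over the \emph{constants} $\cC$ by $y_0$ (transcendental over $\cC$, since $\{y_0,\ldots,y_m\}$ is a transcendence basis) together with elements algebraic over $\cC(y_0)$ whose defining relations have coefficients in $\cC$. Differentiating such a relation and using the chain rule gives, for $D:=d/dy_0$,
\[
\partial w=(D w)\,\partial(y_0)\qquad\text{for every }w\in E .
\]
In particular $\partial(z_0)/z_0=(Dz_0/z_0)\,\partial(y_0)$ with $Dz_0/z_0\in E$, so the logarithmic differential $dz_0/z_0=(Dz_0/z_0)\,dy_0$ on $C$ enters, its residue at a place $P$ being the integer $\ord_P(z_0)$. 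Since $z_0=\vphi(g)$ is a non-constant germ we have $z_0\notin\cC$, so $z_0$ is a non-constant function on $C$ and $\ord_P(z_0)\neq0$ for some $P$.

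In the base case $m=0$ one has $y_0=\vphi(\mathrm{id})=z$, hence $\partial(y_0)=1$ and $\partial=D$, and (\ref{thirdtwisteq}) becomes the identity of meromorphic differentials $df=c\,dz_0/z_0$ on $C$. The left-hand side is exact and has vanishing residues, while the right-hand side has residue $c\,\ord_P(z_0)$; thus $c\,\ord_P(z_0)=0$, forcing $c=0$, after which $\partial f=0$ gives $f\in\Const(\partial)=\cC$. For the inductive step, $y_0$ is transcendental over $F_1$, with $\partial(y_0)=\partial(z_1)/z_1\in F_1$ when $\gamma_1=\tl$ and $\partial(y_0)=y_0\,\eta_0$ with $\eta_0\in F_1$ when $\gamma_1\in\{\te,\tp\}$. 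I would write $\partial=\partial(y_0)\,D+\tilde\partial$, where $\tilde\partial$ annihilates $E$ and agrees with $\partial$ on $F_1$ (well defined since $E$ and $F_1$ are algebraically disjoint over $\cC$), divide (\ref{thirdtwisteq}) by $\partial(y_0)$, and read off residues at a place $v$ of $F_0/\overline{F_1}$ lying over a zero or pole of $z_0$. Such $v$ is defined over $\overline\cC$, so its uniformizer may be taken in $\overline\cC\,E$, where $\tilde\partial$ vanishes; the exact part then contributes no residue and a short computation turns the $\tilde\partial$-term into $\partial(\rho_v)/\eta_0$, with $\rho_v=\Res_v(f\,dy_0)$ (resp.\ $\Res_v((f/y_0)\,dy_0)$).

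For $\gamma_1=\tl$ and $\gamma_1=\tp_\alpha$ this produces \emph{exactly a third twisted equation over $F_1$}, namely $\partial(\rho_v)=c\,\ord_v(z_0)\,\partial(z_1)/z_1$ (with an extra factor $\alpha$ in the $\tp$ case). Its augmented path $\tl\,a_1\gamma_2\cdots a_m$ is again a nice tame augmentation of the height-$(m-1)$ tail $a_1\gamma_2\cdots a_m$, whose transcendence basis is $\{y_1,\ldots,y_m\}$, so the induction hypothesis forces $c\,\ord_v(z_0)=0$; choosing $v$ with $\ord_v(z_0)\neq0$ gives $c=0$, and then $f\in\cC$ as before. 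The tameness bookkeeping is consistent here because, for $a_1$, the segments $[\tl,\gamma_2]$ and $[\tp,\gamma_2]$ impose the same affine-type requirement.

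The case $\gamma_1=\te$ is the main obstacle, and the only one genuinely using tameness and the normal-form transversals. Here the residue computation merely yields $\partial(\rho_v)=c\,\ord_v(z_0)\,\partial(z_1)$, which is always solvable (take $\rho_v=c\,\ord_v(z_0)\,z_1$) and so proves nothing; the descent must be replaced by a direct analysis in the transcendental variable $y_0$. Since $a_0$ now lies in a $[\tl,\te]$-segment, tameness forces it to be of affine type, and Theorem~\ref{theorem-ofcohen} then pins down the structure of $z_0$ as a function of $y_0$; crucially, a zero or pole of $\partial(z_0)/z_0$ occurs at a value $y_0=\beta_0$ with $\beta_0\neq0,\infty$, the non-vanishing of $\beta_0$ being guaranteed by the normal form, since a $\tl$ is never immediately followed by a scaling (the rule $\tl\,\ts_\alpha\Rightarrow\tt_b\,\tl$ would otherwise reduce the path). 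A partial-fraction analysis at $\beta_0$, using $\partial(y_0)=y_0\,\partial(z_1)$ with the nonzero indicial factor $\beta_0\,\partial(z_1)$ to track the pole orders of $\partial f$, shows that $f$ can have no pole at $\beta_0$; the simple pole on the right-hand side then forces its coefficient, hence $c$, to vanish, and $\partial f=0$ gives $f\in\cC$. I expect the delicate points to be the precise bookkeeping of the horizontal derivation $\tilde\partial$ in the residue identity, and the exact localization of the singularities of $dz_0/z_0$ relative to $y_0\in\{0,\infty\}$ in this $\te$-case.
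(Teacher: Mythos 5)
Your overall strategy is genuinely different from the paper's. The paper proves all three twisted lemmas in one shot by applying the Ax--type statement (Lemma~\ref{lemma-lindepforms}) to the whole field $F$, extracting via Corollary~\ref{corollary-resonantmonomialstrans} a power or linear resonance between $y_j$ and $z_j$ at \emph{some} index $j$, and then contradicting Corollary~\ref{corollary-ofcohen} through a modified algebraic path $a^*$. Your induction on $\het(g)$ with residues on the curve of $a_0$ is a legitimate alternative for the base case and for the $\gamma_1\in\{\tl,\tp_\alpha\}$ descents (modulo the technical points you flag: $\tilde\partial$-constant uniformizers, and descending $\rho_v$ from $\overline{F_1}$ to $F_1$, e.g.\ by a trace). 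The advantage would be avoiding the global differential-form machinery; the cost is that everything must be read off the leading subpath $a_0$ alone, and that is exactly where the argument breaks.

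The gap is in the case $\gamma_1=\te$. Your contradiction needs a place $P$ with $\ord_P(z_0)\neq 0$ lying over a value $y_0=\beta_0\notin\{0,\infty\}$, and you justify its existence only by noting that a $\tl$ is never followed by a scaling. That excludes $z_0=\alpha y_0$ (the length-zero case) but says nothing about longer paths: the normal-form conditions constrain each factor of $a_0$, not where the \emph{composite} sends $0$ and $\infty$. Concretely, take $g=\tt_b\,\tp_{1/2}\,\tt_{b^2}\,\te$ with $b\neq 0$; this is a tame normal form of height $1$, its $\tl$-augmentation is nice and $a_0=\tt_b\,\tp_{1/2}\,\tt_{b^2}$ is of affine type, and $\{y_0,y_1\}=\{e^z,z\}$ is a transcendence basis. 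Here $E=\cC(x_0)$ with $y_0=x_0^2-b^2$ and $z_0=x_0+b$, so $\div(z_0)=(x_0=-b)-(x_0=\infty)$ and \emph{both} the zero and the pole of $z_0$ lie over $y_0\in\{0,\infty\}$, precisely where $\partial(y_0)=y_0\,\partial(z_1)$ degenerates: a direct computation gives $\partial(z_0)/z_0=\tfrac{1}{2}(x_0-b)\,\partial(z_1)/x_0$, which is \emph{regular} at both points, so your pole-order/residue count yields nothing there; its only pole over a finite nonzero $\beta_0$ sits at the ramification point $x_0=0$ of $y_0$, where $\Res_P(dz_0/z_0)=0$ and the simple pole is absorbed by the term $\partial(y_0)\,Df$ with no constraint on $c$. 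To push the induction through at the degenerate places one would have to expand $f$ there and show that the leading Laurent coefficient satisfies a \emph{first} twisted equation over the tail -- i.e.\ the three lemmas must be run as a simultaneous induction, with the $[\tl,\te]$ case of this lemma calling Lemma~\ref{lemma-twisted1} for the shorter path -- and none of this is in your sketch. What the paper actually uses at such a segment is the multiplicative independence of $y_0$ and $z_0$ modulo constants (absence of a power resonance, supplied by Corollary~\ref{corollary-ofcohen}), a statement that holds in the example above even though your pole-location claim fails; your argument as written proves something weaker and insufficient.
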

We postpone the proofs of these Lemmas to the subsection~\ref{subsect-prooftwisted}.  
\subsection{The proof of Theorem~\ref{theorem-allsimple}}
Let us see how the previous results imply the Theorem~\ref{theorem-allsimple}.
\begin{proof}[Proof of Theorem~\ref{theorem-allsimple}]
As we said above, the proof is by induction in $m = \het(g)$.  Therefore, we start with the case $m = 0$.  By definition, $g = a$ is an algebraic path of affine type, and 
the associated field $K = \cC(y,z)$ has transcendency degree one over $\cC$.

Now, given $m \ge 0$, we assume by induction that all tame normal forms of height $\le m$ satisfy the conclusions of the Theorem.
Let $h \in \NF_\tame$ be a normal form of height
$m+1$.  Then, we can write the decomposition
$$
h = a_0\, \gamma_0\, g
$$
for some algebraic path $a_0 \in \PSL$, a germ $\gamma_0 \in \{\te,\tl,\tp_\alpha : \alpha \in \Omega\setminus \cQ\}$.  Further,  
$$g = a_1\, \gamma_2\, \theta_2\, \cdots\, \gamma_{n+1}\, \theta_{n+1}$$ 
is a tame normal form of height $m$ and the augmented path $g_\aug = \gamma_0 g$ is a nice augmentation of $g$ (see definition at subsection~\ref{subsect-twistedequations}).  

Let us denote by $F$ and $F^\prime$ the differential fields associated to $g$ and $h$, and write
$$
F = \cC(y_1,z_1,\ldots,y_{m+1},z_{m+1}), \quad F^\prime = \cC(y_0,z_0,\ldots,y_{m+1},z_{m+1}). 
$$
Recall that, by construction, $z_0$ is algebraic over $\cC(y_0)$.  Therefore,  to prove the induction step, it suffices to show that the element 
$y_0$ is transcendental over
$F$.   To simplify the notation, from now on we will write $y = y_0$ and $z = z_1$.

Let us assume for a contradiction, that $y$ is algebraic over $F$.
We choose a minimal polynomial $f \in F[X]$ for $y$, say  
$$
f = f_0 + \cdots +  f_{d-1} X^{d-1} + X^d, \quad f_k \in F, 
$$
where we can assume that $d \ge 1$ and that $f_0$ is nonzero.  We  discuss separately the cases where 
$\gamma_0 = \te$ (extension of exp type), $\gamma_0 = \tl$ (extension of log type) and $\gamma_0 = \tp_\alpha$ (extension of power type).  

Suppose that the extension $F \subset F^\prime$ is of exp type.  Recalling that $y$ satisfies the equation $\partial(y) = y \partial(z)$,
it follows from the equation $\partial(f(y)) = 0$ that the polynomial 
$$
p = \partial(f_0) + \cdots + \Big(\partial(f_{d-1}) + (d-1)\partial(z) f_{d-1}\Big) X^{d-1} +  d \partial(z) X^d
$$
vanishes on $y$.  As a consequence,  the polynomial of degree at most $d-1$ given by
$q = p -  d \partial(z) f$ 
also vanishes $y$.  By the minimality of $f$, this polynomial must vanish identically.  This is equivalent to the collection of equations
$$
\Big( \partial - (k-d) \partial(z) \Big) f_k = 0, \quad k = 0,\ldots,d-1.
$$ 
We claim that this implies $d = 0$, which contradicts the definition of $f$.  

Indeed, assume for a contradiction that $d \ge 1$. Then, we are precisely in the hypothesis of  Lemma \ref{lemma-twisted1}, i.e.~each $f_k$ satisfies the first twisted equation with $\mu =  (k-d)$.   In particular, for $k = 0$, one has 
$$\Big( \partial - d \partial(\theta(x))\Big)f_0 = 0$$ 
and the Lemma implies that $f_0 = 0$, which is absurd.

Assume now that the extension $F \subset F^\prime$ is of power type.  Then, it follows from the relation 
$\partial(y) = \alpha\, y\, \partial\big(z\big)/z$ that the polynomial
$$
p = \partial(f_0) + \cdots + \Big(\partial(f_{d-1}) + \alpha (d-1)\frac{\partial(z)}{z} f_{d-1}\Big) X^{d-1} +  \alpha d \frac{\partial(z)}{z} X^d
$$
also vanishes on $y$.  Hence, by the minimality of $d$, the polynomial
$$
q = p -  \alpha d \frac{\partial(z)}{z} f$$
must vanish identically.  This corresponds to the collection of equations
$$
\Big( \partial - \alpha (d-k) \frac{\partial{(z)}}{z}\Big) f_k = 0, \quad k = 0,\ldots,d-1.
$$
We claim that this set of equations has no solution if $d \ge 1$.  Indeed, in this case, each one of the above equation corresponds to a twisted equation as described in 
Lemma~\ref{lemma-twisted2} with $\mu = \alpha (d-k) \in \cC \setminus \cQ$.  In particular, for $k = 0$, we conclude from that Lemma that $f_0 = 0$, which is absurd. 

Finally, in case where the extension $F \subset F^\prime$ is of log type, we have 
$$
\partial(y) = \frac{\partial(z)}{z}.
$$
Hence, the polynomial of degree at most $d-1$ given by
$$
q = \Big( \partial(f_0) + \frac{\partial(z)}{z} f_1 \Big) + \cdots + \Big(\partial(f_{d-1}) + d \frac{\partial(z)}{z} \Big) X^{d-1} 
$$
also vanishes on $y$.  By the minimality of $d$, this polynomial vanishes identically and this corresponds to say that the collection of equations 
$$
\partial f_{k} =  \frac{\partial(z)}{z}(k+1) f_{k+1}, \quad \text{ for }k = 0,\ldots,d-1
$$
hold, where we put $f_d = 1$.  Taking $k = d -1$, the rightmost equation is exactly the twisted equation from 
Lemma~\ref{lemma-twisted3}.   From the Lemma, it follows that $d=0$, which contradicts our assumption. This concludes the proof of the Theorem. 
\end{proof}
\begin{Remark}\label{remark-casep=0}
As some readers may have noticed, the above computations are very similar to the classical computations of the Picard-Vessiot extension $K/k$ for the elementary  linear differential equations 
$$\partial(y) = a, \quad \text{or} \quad \partial(y) = a y, \quad \text{ with }a \in k$$ 
over a given differential field $(k,\partial)$ (see e.g.~\cite{MR1960772}, examples 1.18 and 1.19).  In this simple setting, the computation of the differential Galois group of the extension $K/k$ reduces to studying when these equations have no solution in the base field.
\end{Remark}
\subsection{Resonance relations and Ax Theorem}
In this subsection, we  recall some results about differential field extensions and differentials forms, following closely Wilkie's notes \cite{wilkie}. They are key ingredients in proof of the celebrated Ax's Theorem (cf. \cite{MR0277482}).
We observe that the results described here are completely independent of the Theorem \ref{theorem-allsimple} and Lemmas \ref{lemma-twisted1}, \ref{lemma-twisted2} and \ref{lemma-twisted3}.  

In this subsection, $k \subseteq K$ will denote arbitrary fields of characteristic zero, and $k$ will be assumed to be algebraically closed. 
We will say that elements $x_1,\ldots,x_n \in K$ satisfy a {\em power resonance relation (over $k$) }if there exist integers $r_1,\ldots,r_n$, not all zero, 
such that
$$
\prod_{i=1}^n x_i^{r_i}  \in k.
$$
Similarly, we will say that $y_1,\ldots,y_n \in K$ satisfy a {\em linear resonance relation (over $k$) }if there exist integers $r_1,\ldots,r_n$, not all zero, 
such that
$$
\sum_{i=1}^n r_i\, y_i \in k.
$$
Since $k$ is supposed algebraically closed, we can assume in both cases that the integers $r_1,\ldots,r_n$ are coprime.

We will denote by 
$\mathrm{Der}_k(K)$ the set of derivations $\delta: K \rightarrow K$ whose constant subfield $\Const(\delta)$ contains $k$.  For each $n \in \cN$, $\Omega^n_k(K)$ denotes the $K$-vector space of alternating, $K$-linear $n$-forms on $\mathrm{Der}_k(K)$ and 
$$d: \Omega^n_k(K) \longrightarrow \Omega^{n+1}_k(K)$$
is the total differential map.  The space $\Omega^1_k(K)$ is the dual of $\mathrm{Der}_k(K)$, and it is usually called the space of K\" ahler differentials of $K$ over $k$.   The space $\Omega^0_k(K)$ is identified to $K$.

We say that a 1-form 
$\omega \in \Omega^1_k(K)$ is closed (resp. exact) if $d\omega = 0$ (resp. 
$\omega = du$ for some $u \in K$).  Finally, given an intermediate field $k \subset K_0 \subset K$,  we say that a form $\omega \in \Omega^1_k(K)$ is {\em defined over $K_0$} if $\omega = \sum {a_i db_i}$, for some $a_i,b_i \in K_0$.
\begin{Lemma}\label{lemma-lindepforms}
Suppose that $K_0$ is a field such that $k \subset K_0 \subset K$ and $\transdeg_k(K_0) = n$ for some $n \ge 1$.  Let $\delta \in \Der_k(K)$ be a derivation such that $\Const(\delta) = k$, and suppose that 
$$\omega_1,\ldots,\omega_n \in \Omega^1_k(K)$$ 
are closed 1-forms defined over $K_0$ satisfying $\omega_i (\delta) = 0$, for $i  =1,\ldots,n$. Then $\omega_1,\ldots,\omega_n$ 
are linearly dependent over $k$.
\end{Lemma}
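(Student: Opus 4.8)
The plan is to fix a transcendence basis of $K_0/k$, rewrite everything as an $n\times n$ matrix over $K_0$, and then use the closedness of the $\omega_i$ together with the hypothesis $\Const(\delta)=k$ to promote a mere $K_0$-linear dependence into a $k$-linear one. First I would choose a transcendence basis $t_1,\dots,t_n\in K_0$. Since the characteristic is zero, $K_0$ is separable algebraic over $k(t_1,\dots,t_n)$, so $dt_1,\dots,dt_n$ form a $K_0$-basis of $\Omega^1_k(K_0)$ and remain linearly independent over $K$ inside $\Omega^1_k(K)$ (these are the standard facts on Kähler differentials recalled in \cite{wilkie}). As each $\omega_i$ is defined over $K_0$, expanding its defining differentials in this basis yields $\omega_i=\sum_{j=1}^n c_{ij}\,dt_j$ with all $c_{ij}\in K_0$; I collect these into a matrix $C=(c_{ij})\in M_n(K_0)$.

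Next I would prove $\det C=0$. Putting $u_j=\delta(t_j)\in K$, the hypothesis $\omega_i(\delta)=0$ becomes $\sum_j c_{ij}u_j=0$, i.e.\ $Cu=0$ for the column vector $u=(u_1,\dots,u_n)$. This $u$ cannot vanish: if it did, $\delta$ would kill each $t_j$ and all of $k$, hence all of $k(t_1,\dots,t_n)$, and then all of $K_0$ (differentiate the minimal polynomial of any $a\in K_0$ and use $p'(a)\ne 0$ in characteristic zero); as $n\ge 1$ this would make $t_1\notin k$ a constant of $\delta$, contradicting $\Const(\delta)=k$. A nonzero kernel vector over $K$ forces $\det C=0$, and since $\det C\in K_0$ the forms $\omega_1,\dots,\omega_n$ are already linearly dependent over $K_0$.

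The crux is to replace these $K_0$-coefficients by elements of $k$. Among all nontrivial $K_0$-linear relations $\sum_i\lambda_i\omega_i=0$ I would select one with the fewest nonzero coefficients; if that number is $1$ then some $\omega_i=0$ and we are done, so after reindexing and scaling assume it is
\[
\omega_1+\lambda_2\omega_2+\cdots+\lambda_s\omega_s=0,\qquad \lambda_2,\dots,\lambda_s\neq 0,
\]
with $\omega_2,\dots,\omega_s$ linearly independent over $K_0$ by minimality. Applying the total differential and using $d\omega_i=0$ and $d\lambda_1=0$ gives $\sum_{i=2}^s d\lambda_i\wedge\omega_i=0$. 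Now I evaluate this $2$-form on the pair $(\delta,D)$ for an arbitrary $D\in\Der_k(K)$: since $\omega_i(\delta)=0$ and $d\lambda_i(\delta)=\delta(\lambda_i)$, the expansion $(d\lambda_i\wedge\omega_i)(\delta,D)=\delta(\lambda_i)\,\omega_i(D)-d\lambda_i(D)\,\omega_i(\delta)$ collapses to $\delta(\lambda_i)\,\omega_i(D)$, so that $\sum_{i=2}^s\delta(\lambda_i)\,\omega_i=0$. Because $\omega_2,\dots,\omega_s$ are independent over $K_0$, their coordinate vectors in $K_0^n$ are $K$-independent, and as the $dt_j$ are independent over $K$ the forms themselves are independent over $K$; hence $\delta(\lambda_i)=0$ for every $i$, i.e.\ each $\lambda_i\in\Const(\delta)=k$. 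Thus $\omega_1+\lambda_2\omega_2+\cdots+\lambda_s\omega_s=0$ is the desired nontrivial $k$-linear relation.

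The only genuinely delicate point is this last passage from dependence over $K_0$ to dependence over $k$, and it is exactly here that the three hypotheses interact: closedness produces the relation $\sum d\lambda_i\wedge\omega_i=0$, the vanishing $\omega_i(\delta)=0$ lets contraction with $\delta$ turn it into the new relation $\sum\delta(\lambda_i)\,\omega_i=0$, and $\Const(\delta)=k$ converts $\delta(\lambda_i)=0$ into $\lambda_i\in k$. The minimality of the chosen relation is what guarantees the independence of $\omega_2,\dots,\omega_s$ needed to run this step; all the background dimension counts for differentials in characteristic zero are routine and quotable from \cite{wilkie}.
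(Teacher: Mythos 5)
Your proof is correct, and the paper itself gives no argument for this lemma, deferring entirely to Wilkie's notes (Theorem 2); the argument you reconstruct --- express the $\omega_i$ in the basis $dt_1,\dots,dt_n$, observe that $\omega_i(\delta)=0$ forces the coefficient matrix to be singular since $(\delta(t_1),\dots,\delta(t_n))\neq 0$, then take a minimal-support $K_0$-relation and use $d\omega_i=0$ together with contraction against $\delta$ to conclude $\delta(\lambda_i)=0$, i.e.\ $\lambda_i\in k$ --- is precisely the standard one from that source, going back to Ax. The two points worth being careful about, namely that $dt_1,\dots,dt_n$ stay $K$-independent in $\Omega^1_k(K)$ (derivations extend in characteristic zero) and that $K_0$-independence of the coefficient vectors implies $K$-independence, are both handled correctly.
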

The above statement and its proof can be found, for instance, in 
Wilkie's notes \cite{wilkie}, Theorem 2.
\begin{Lemma}\label{lemma-monomial}
Suppose that there exists nonzero $x_1,\ldots,x_m \in K$ and elements $e_1,\ldots,e_m \in k$ not all zero such that the differential form
$$
\sum_{i=1}^m e_i \frac{d x_i}{x_i} 
$$
is exact.  Then, $x_1,\ldots,x_m$ satisfy a power resonance relation over $k$.  Moreover, assume that there exists a derivation $\delta \in \Der_k(K)$ and $y_1,\ldots,y_m \in K$ such that
$$\Const(\delta) = k, \quad \text{Êand }\quad \frac{\delta(x_i)}{x_i} = \delta(y_i)$$ 
for each $1 \le i \le m$.  Then $y_1,\ldots,y_m$ satisfy a linear resonance relation over $k$. 
\end{Lemma}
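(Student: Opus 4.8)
The plan is to reduce the second assertion to the first by a single differential computation, so that the core is the first assertion, which I would prove by a residue argument. First I would note that $u$ is algebraic over $K_0:=k(x_1,\dots,x_m)$: the hypothesis says $du=\sum_i (e_i/x_i)\,dx_i$ lies in the $K$-span of $dx_1,\dots,dx_m$, and if $u$ were transcendental over $K_0$ the class of $du$ would be independent of $dx_1,\dots,dx_m$ in $\Omega^1_k(K_0(u))$, a contradiction. Hence the whole identity lives in the function field $K_0$, of some transcendence degree $n$ over $k$, and I would fix a normal projective model $X$ of $K_0/k$. (If $n=0$ then $K_0=k$, every $x_i\in k$, and the claim is trivial, so assume $n\ge 1$.)

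I would then use the standard dictionary between power relations and divisors: for integers $r_1,\dots,r_m$ one has $\prod_i x_i^{r_i}\in k$ precisely when the divisor $\sum_i r_i\,\div(x_i)$ vanishes, i.e. when $\sum_i r_i\,\ord_D(x_i)=0$ for every prime divisor $D$ of $X$, since a rational function with empty divisor on a normal projective variety is constant. Thus a power resonance relation is exactly a nonzero integer vector annihilating all the order vectors $(\ord_D(x_i))_{i=1}^m\in\cZ^m$; equivalently, the absence of such a relation means these vectors span $\cQ^m$.

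For the heart of the argument I would argue by contradiction, assuming no power resonance relation. For each prime divisor $D$ the logarithmic residue satisfies $\Res_D(dx_i/x_i)=\ord_D(x_i)$ and $\Res_D(dw)=0$ for every exact form $dw$; applying $\Res_D$ to $\sum_i e_i\,dx_i/x_i=du$ therefore yields $\sum_i e_i\,\ord_D(x_i)=0$ for every $D$. Since $\cQ\subseteq k$ and, by the previous paragraph, the integer order vectors span $\cQ^m$ and hence span $k^m$ over $k$, the only $k$-linear functional $(e_i)$ annihilating all of them is the zero functional; this forces $e_1=\dots=e_m=0$, contradicting the hypothesis that the $e_i$ are not all zero. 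Hence a power resonance relation $\prod_i x_i^{r_i}\in k$ with integers $r_i$ not all zero does exist. The step I expect to be the \emph{main obstacle} is making this residue computation rigorous when $n>1$: the classical place-by-place residue lands in $k$ only for curves, so I would either restrict everything to a sufficiently general complete-intersection curve $C\subset X$, arranging by a Bertini-type general-position argument that the order vectors of the $x_i|_C$ still span $\cQ^m$, or else invoke the higher-dimensional residue along each $D$ together with the vanishing of residues of exact forms. The curve restriction reduces the whole matter to the transcendence-degree-one case, where both residue identities are elementary.

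Finally, the second assertion follows formally. The first part gives integers $r_i$, not all zero, with $\prod_i x_i^{r_i}=c$ for some $c\in k$; as the $x_i$ are nonzero, $c\in k^\ast$. Applying the derivation $\delta$ and using $\Const(\delta)=k$ yields $0=\delta(c)/c=\sum_i r_i\,\delta(x_i)/x_i$. Substituting the hypothesis $\delta(x_i)/x_i=\delta(y_i)$ gives $\delta\!\left(\sum_i r_i y_i\right)=0$, so $\sum_i r_i y_i\in\Const(\delta)=k$: a linear resonance relation with the very same integers $r_i$, as required. Note that this last step needs neither Lemma~\ref{lemma-lindepforms} nor the residue machinery, which are reserved for the twisted-equation Lemmas.
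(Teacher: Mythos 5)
Your proof is correct, but it is only ``different from the paper'' in the sense that the paper does not actually prove the first assertion: it simply cites Wilkie's notes on Ax's theorem for the statement that exactness of $\sum_i e_i\,dx_i/x_i$ forces a power resonance relation, and then derives the second assertion by exactly the computation you give in your last paragraph ($0=\delta(c)/c=\sum_i r_i\delta(x_i)/x_i=\delta(\sum_i r_iy_i)$, hence $\sum_i r_iy_i\in\Const(\delta)=k$). So your second half coincides verbatim with the paper's proof, while your first half supplies the argument the paper outsources; the residue-theoretic proof you sketch (reduce to a normal projective model, read off $\prod_i x_i^{r_i}\in k$ as the vanishing of $\sum_i r_i\operatorname{div}(x_i)$, and kill the coefficients $e_i$ by applying $\Res_D$ to the exact form) is essentially the classical argument behind the cited result, so nothing is lost and the lemma becomes self-contained. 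Two small points to tighten if you write it out: the exact primitive $u$ lies in $K_0(u)$, a finite extension of $K_0=k(x_1,\dots,x_m)$, so the normal projective model should be taken for $K_0(u)$ rather than $K_0$ (the divisor criterion for membership in $k$ and the orthogonality condition defining resonance are unaffected by this finite extension); and the step you yourself flag as the main obstacle --- making the residue computation work when $\transdeg_k K_0>1$ --- does need to be carried out, but the reduction to a general complete-intersection curve meeting each prime divisor in the supports of the $\operatorname{div}(x_i)$ transversally at distinct general points is standard and preserves both the order vectors and the exactness of the restricted form, so the transcendence-degree-one case you prove in full is genuinely the whole content.
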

\begin{proof}
The first statement is proved in \cite{wilkie}.  For the second statement, it suffices to remark that if the monomial 
$m = \prod x_i^{r_i}$ belongs to $k$ then the linear form $l = \sum r_i y_i$ satisfies 
$$
\delta(l) = \sum r_i \delta(y_i) = \sum r_i \frac{\delta(x_i)}{x_i} = \frac{\delta(m)}{m} = 0
$$
which implies that $l \in k$.
\end{proof}
\subsection{Resonances and transcendental equations}\label{subsect-resonance-transcendental}
We now consider an arbitrary normal form $g \in \NF$ of height $m \ge 0$, with algebro-transcendental decomposition,
$$
g = a_0 \, \gamma_{1} \, a_1\, \cdots \, \gamma_m \, a_m, \quad m \ge 0
$$
and associated differential field $(F,\partial)$, where we write
$$
F = \cC(y_0,z_0,\ldots,y_m,z_m)
$$ 
as in subsection~\ref{subsect-fieldofg}.  Further, for each $i = 0,\ldots,m$, we denote by $F_i = \cC(y_i,z_i)$ the field associated to the maximal algebraic subpath $a_i$.\begin{Proposition}\label{prop-resonantmonomial}
Assume that a nonzero element $f \in F$ satisfies one of the following three equations
$$
(1)\; \frac{\partial(f)}{f} = \partial(z_0), \quad  (2)\; \partial(f)= \frac{\partial(z_0)}{z_0}\quad\text{or}\quad (3)\; \frac{\partial(f)}{f} = \beta\frac{\partial(z_0)}{z_0}
$$
for some $\beta \in \cC \setminus \cQ$. Then, the elements 
$$y_0,z_0,\ldots,y_m,z_m \in F$$ 
satisfy either a power resonant relation or a linear resonant relation over $\cC$. 
\end{Proposition}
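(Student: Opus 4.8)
The plan is to translate every differential relation among the generators into a closed $1$-form annihilated by $\partial$, and then feed these forms into the Ax-type Lemmas~\ref{lemma-lindepforms} and~\ref{lemma-monomial}. First I would record the ambient bounds: since each $z_i$ is algebraic over $\cC(y_i)$, the field $F$ is algebraic over $\cC(y_0,\ldots,y_m)$, whence $\transdeg_\cC F\le m+1$; moreover $\Const(\partial|_F)=\cC$ because $F\subset\Merom$. Next, for each transition index $0\le i\le m-1$ the defining relation of $y_i$ produces a closed form $\omega_i\in\Omega^1_\cC(F)$ with $\omega_i(\partial)=0$, namely $\frac{dy_i}{y_i}-dz_{i+1}$, $dy_i-\frac{dz_{i+1}}{z_{i+1}}$ or $\frac{dy_i}{y_i}-\alpha\frac{dz_{i+1}}{z_{i+1}}$ according as $\gamma_{i+1}$ is $\te$, $\tl$ or $\tp_\alpha$. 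The hypothesis on $f$ contributes one further closed, $\partial$-annihilated form $\omega_f$, equal to $\frac{df}{f}-dz_0$, $df-\frac{dz_0}{z_0}$ or $\frac{df}{f}-\beta\frac{dz_0}{z_0}$ in cases (1), (2), (3) respectively.

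These are $m+1$ closed $1$-forms defined over $F$ and vanishing on $\partial$. Since $\transdeg_\cC F\le m+1$, Lemma~\ref{lemma-lindepforms} forces them to be linearly dependent over $\cC$, so I obtain a nontrivial relation $c_f\omega_f+\sum_{i=0}^{m-1}c_i\omega_i=0$ with coefficients in $\cC$. I would then split each form into its \emph{exact} part (a $\cC$-combination of differentials $dv$ of field elements $v\in\{y_i,z_j,f\}$) and its \emph{logarithmic} part (a $\cC$-combination of terms $\frac{du}{u}$). Because the total form is zero, the logarithmic part equals minus the exact part and is therefore exact. If this logarithmic part is not identically zero, Lemma~\ref{lemma-monomial} yields a power resonance among the elements $u$ that occur; if it vanishes, the surviving exact relation $\sum_k\lambda_k\,dv_k=0$ integrates to $\sum_k\lambda_k v_k\in\cC$, a linear resonance among the $v_k$.

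The delicate point, and the step I expect to be the main obstacle, is that the element produced by Lemma~\ref{lemma-monomial} may involve the auxiliary germ $f$ rather than only the generators $y_0,z_0,\ldots,y_m,z_m$; this occurs for the power alternative in cases (1),(3), where $\frac{df}{f}$ enters, and for the linear alternative in case (2), where $df$ enters. To remove $f$ I would use its defining equation: taking the logarithmic derivative of a power resonance $f^{r}\prod u^{r_u}\in\cC$ (or the ordinary derivative of a linear one) and substituting $\frac{\partial f}{f}=\partial z_0$, $\frac{\partial f}{f}=\beta\frac{\partial z_0}{z_0}$ or $\partial f=\frac{\partial z_0}{z_0}$ as appropriate, replaces the $f$-term by a form in the generators alone and yields a new closed, $\partial$-annihilated form $\Psi$ that is free of $f$. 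Re-running Lemma~\ref{lemma-lindepforms} on $\{\omega_0,\ldots,\omega_{m-1},\Psi\}$ and then Lemma~\ref{lemma-monomial} produces the desired power or linear resonance purely among $y_0,z_0,\ldots,y_m,z_m$.

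Finally, a book-keeping issue that must be handled throughout is the guarantee that the resonance ultimately extracted is nontrivial. This is where the precise shapes of the $\omega_i$ matter: whether a given generator carries a logarithmic differential $\frac{du}{u}$ or an ordinary differential $du$, and whether it admits a $\partial$-antiderivative or a logarithm inside $F$ (for instance $z_{i+1}$ has the antiderivative-logarithm $y_i$ exactly when $\gamma_{i+1}=\tl$, while $y_i$ has logarithm $z_{i+1}$ when $\gamma_{i+1}=\te$) dictates whether the argument lands in the power or the linear alternative and lets me invoke the second part of Lemma~\ref{lemma-monomial} when a linear resonance is needed. Tracking these cases is routine but tedious, and I would organize the verification by the type of the transition $\gamma_{i+1}$ adjacent to each generator.
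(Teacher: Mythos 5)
Your proposal follows the paper's proof almost line for line: the same closed $1$-forms attached to the transitions $\gamma_{i+1}$ and to the equation for $f$, the same appeal to Lemma~\ref{lemma-lindepforms} via $\transdeg_\cC F\le m+1$, the same application of Lemma~\ref{lemma-monomial} to the exact logarithmic part, and the same device of eliminating $f$ by feeding its defining equation back into the monomial relation. (The paper substitutes the resulting expression for $df/f$ into the already-obtained dependence $\sum c_j\omega_j=0$ rather than re-running Lemma~\ref{lemma-lindepforms} on $\{\omega_0,\ldots,\omega_{m-1},\Psi\}$, but that difference is cosmetic.)

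The one step that is wrong as literally written is your route to the linear alternative. A linear resonance relation requires \emph{integer} coefficients, whereas the $\lambda_k$ in your relation $\sum_k\lambda_k\,dv_k=0$ are the complex constants supplied by Lemma~\ref{lemma-lindepforms}; integrating that relation only gives a $\cC$-linear dependence among the generators, which is too weak to feed into Corollary~\ref{corollary-resonantmonomialstrans} and the twisted-equation lemmas (which build modified paths out of the ratios of these coefficients). The paper obtains integrality exactly by the mechanism you only gesture at in your last paragraph: when the logarithmic coefficients of the $f$-free form all vanish, the constants $c_j$ are forced to equal the \emph{rational} numbers $p_{j-1}=-u_{j-1}/w$, $q_j=-v_j/w$ built from the integer exponents of the monomial produced at the first stage, and the surviving exact relation then integrates to $w\,z_0+\sum u_{j-1}z_j+\sum v_j y_{j-1}\in\cC$. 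This is also precisely where the hypotheses $\alpha_j\notin\cQ$ and $\beta\notin\cQ$ enter (to kill the $I_p$ contributions and, in case (3), to guarantee that the coefficient of $dz_0/z_0$ cannot vanish). So the ``routine but tedious'' book-keeping you defer is not optional polish: it is where the rationality of the coefficients, and hence the integrality of the linear resonance, has to be established, and without it the linear alternative of the Proposition is not proved.
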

\begin{proof}
Based on the algebraic-transcendental expansion of $g$ written above, 
we consider the following subsets of $\{1,\ldots,m\}$
\begin{align*}
I_e &= \{ j  \colon \gamma_j = \te\}, \quad I_l = \{ j \colon \gamma_j = \tl\},\quad \text{Êand }\\
I_p &= \{ j \colon \gamma_j = \tp_{\alpha_j}, \text{Êfor some }\alpha_j \in \Omega \setminus \cQ\},
\end{align*}
and define a collection of closed 1-forms $\omega_1,\ldots,\omega_m \in \Omega^1_{\cC}(F)$ as follows
$$
\omega_j = 
\begin{cases}
dy_{j-1}/y_{j-1} - dz_j,& \text{Êif }j \in I_e\\
dy_{j-1} - dz_j/z_j,& \text{Êif }j \in I_l\\
dy_{j-1}/y_{j-1} - \alpha_j dz_j/z_j,& \text{Êif }j \in I_p\\
\end{cases}
$$
where, in this last case, $\alpha_j$ denotes the exponent in the power map $\gamma_j = \tp_{\alpha_j}$.  Similarly, we define the closed 1-form $\omega_0$ as
$$
\omega_0 = 
\begin{cases}
df/f - dz_0,& \text{Êif $f$ satisfies (1)}\\
df - dz_0/z_0,& \text{Êif $f$ satisfies (2)}\\
df/f - \beta dz_0/z_0,& \text{Êif $f$ satisfies (3)}\\
\end{cases}
$$
Then, by Lemma~\ref{lemma-lindepforms}, since $\omega_j(\partial) = 0$ for all $0 \le j \le m$, and $\mathrm{trdeg}_\cC(F) \le m+1$, there exist constants $c_0,\ldots,c_m \in \cC$, not all zero, such that the relation
$$c_0 \omega_0 + \cdots + c_m \omega_m = 0$$  
holds.  We consider, first of all, the case where $f$ satisfies equation (2).  Then, by suitably regrouping the terms in the above relation, we obtain a 1-form 
$$
c_0 \frac{dz_0}{z_0} + \sum_{j \in I_e}{c_i\, \frac{dy_{i-1}}{y_{i-1}}}+ \sum_{j \in I_l}{c_j\, \frac{dz_j}{z_j} }+ \sum_{j \in I_p}{c_j\, \Big( \frac{dy_{j-1}}{y_{j-1}} + \alpha_j
\frac{dz_j}{z_j} \Big)}
$$
which is exact.  Therefore, we can apply Lemma \ref{lemma-monomial} to conclude that there exists a monomial
$$
M = z_0^{v_0} \prod_{j \in I_e}{y_{j-1}^{u_{j-1}}}\; \prod_{j \in I_l}{z_j^{v_j}} \; \prod_{j \in I_p}{y_{j-1}^{u_{j-1}}z_j^{v_j}}
$$
with integer exponents not all zero, which belongs to $\cC$.  This proves the result.

We consider now the case (1).  Here, we conclude from the relation  $\sum {c_j \omega_j}=0$ that the $1$-form
$$
c_0 \frac{df}{f} + \sum_{j \in I_e}{c_j\, \frac{dy_{i-1}}{y_{i-1}}}+ \sum_{j \in I_l}{c_j\, \frac{dz_j}{z_j} }+ \sum_{j \in I_p}{c_j\, \Big( \frac{dy_{j-1}}{y_{j-1}} - \alpha_j
\frac{dz_j}{z_j} \Big)}
$$
is exact.  Hence, applying again Lemma \ref{lemma-monomial}, we show that there exists a monomial of the form
$$
M = f^w \prod_{j \in I_e}{y_{j-1}^{u_{j-1}}}\; \prod_{j \in I_l}^m{z_j^{v_j}} \; \prod_{j \in I_p}{y_{j-1}^{u_{j-1}}z_j^{v_j}} \in \cC
$$ 
(for some integers $w,v_j,u_j$, not all zero) which belong to $\cC$.  Notice that if $w = 0$ we are done because this would give the desired relation.  
By the same reason,  we would be done if $c_0 = 0$.

Hence, from now on, we can assume that $w$ is nonzero and that $c_0 = 1$.  By computing the logarithmic derivative $dM/M$, we can write
$$
\frac{df}{f}Ê= \sum_{j \in I_e}^m{p_{j-1} \frac{dy_{j-1}}{y_{j-1}}} + \sum_{j \in I_l}^m{q_j \frac{dz_j}{z_j}} + \sum_{j \in I_p}^m{\Big( p_{j-1} \frac{dy_{j-1}}{y_{j-1}} + q_j
\frac{dz_j}{z_j} \Big)}
$$
with $p_j = - u_j/w$ and $q_j = -v_j/w$ being rational numbers. We can now replace this expression for $df/f$ in the relation $\sum{c_j \omega_j} = 0$ and, again by suitably
regrouping the terms, conclude that the $1$-form
\begin{align*}
\sum_{j \in I_e}{(c_j + p_{j-1})\, \frac{dy_{i-1}}{y_{i-1}}}+ &\sum_{j \in I_l}{(c_j + q_j)\, \frac{dz_j}{z_j} }+ \\
&\sum_{j \in I_p}{(c_j + p_{j-1}) \frac{dy_{j-1}}{y_{j-1}} 
+ (-c_j\alpha_j + q_j)
\frac{dz_j}{z_j}}
\end{align*}
is exact.  If at least one of the coefficients of this 1-form is nonzero then we can apply again Lemma \ref{lemma-monomial} in order to obtain a monomial satisfying the conditions in the statement.
So, let us assume that all these coefficients vanish.  Since $\alpha_i \notin \cQ$ for each $i \in I_p$, we conclude that
$$
c_j = p_{j-1} = q_j = 0, \quad \text{Êfor all }j \in I_p.
$$
In particular, the monomial $M$ has simply the form
$$
M = f^w \prod_{j \in I_e}{y_{j-1}^{u_{j-1}}}\; \prod_{j \in I_l}^m{z_j^{v_j}}.
$$
If we consider the linear form
$$
l = w\, z_0 + \sum_{j \in I_e}{u_{j-1} z_j} + \sum_{j \in I_l}{v_{j} y_{j-1}}
$$
it is easy to see that $\partial(l) = \partial(M)/M = 0$.  Therefore, $l \in \Const(\partial) = \cC$.

It remains to consider the case of equation (3). The treatment is similar to the previous case.  Here, we obtain a 1-form 
$$
c_0 \Big( \frac{df}{f} - \beta \frac{dz_0}{z_0}) + \sum_{j \in I_e}{c_i\, \frac{dy_{i-1}}{y_{i-1}}}+ \sum_{j \in I_l}{c_j\, \frac{dz_j}{z_j} }+ \sum_{j \in I_p}{c_j\, \Big( \frac{dy_{j-1}}{y_{j-1}} + \alpha_j
\frac{dz_j}{z_j} \Big)}
$$
which is exact, and hence there exists a monomial 
$$
M = f^w z_0^{v_0}\prod_{j \in I_e}{y_{j-1}^{u_{j-1}}}\; \prod_{j \in I_l}^m{z_j^{v_j}} \; \prod_{j \in I_p}{y_{j-1}^{u_{j-1}}z_j^{v_j}} \in \cC
$$
(with exponents not all zero).  Assuming that $c_0 = 1$ and that $w$ is nonzero (otherwise we are done), we can apply exactly the same reasoning as above 
to conclude that the 1-form
\begin{align*}
(q_0 - \beta)\frac{dz_0}{z_0} + &\sum_{j \in I_e}{(c_j + p_{j-1})\, \frac{dy_{i-1}}{y_{i-1}}}+ \sum_{j \in I_l}{(c_j + q_j)\, \frac{dz_j}{z_j} }+ \\
&\sum_{j \in I_p}{(c_j + p_{j-1}) \frac{dy_{j-1}}{y_{j-1}} 
+ (-c_j\alpha_j + q_j)
\frac{dz_j}{z_j}}
\end{align*}
is exact (where $q_0 = -v_0/w$).   Since $\beta \notin \cQ$, the coefficient in front of $dz_0/z_0$ cannot vanish. Therefore, we can apply 
Lemma \ref{lemma-monomial} in order to obtain a monomial which satisfies the desired relation.   This concludes the proof.
\end{proof}
For later use, we need to establish a more precise statement about the existence of power/linear relations in the fields
$F_0,\ldots,F_m$ associated to the algebraic subpaths $a_0,\ldots,a_m$.  

Given a normal form $g = a_0\gamma_1a_1
\cdots \gamma_m a_m$ as in the beginning of the subsection and an equation for $f \in F$ as in the statement of the previous Proposition, we define
the {\em augmentation of $g$} as the path  
$$
g_{\aug} =  \gamma_0\, a_0\, \gamma_1\,  \cdots \, \gamma_m\, a_m
$$
which is obtained by concatenating to $g$ the symbol $\gamma_0 = \te$ (resp. $\tl$ or $\tp_\beta$) if $f$ satisfies equation (1) (resp. (2) or (3)).

Further, given an index $0 \le j \le m-1$ and two symbols $\gamma,\gamma' \in \{\te,\tl,\tp\}$, we will say that  the algebraic subpath $a_j$ of $g_{\aug}$ lies in a {\em 
$[\gamma,\gamma']$ segment }if 
$$\gamma_j = \gamma\quad\text{ and }\quad\gamma_{j+1} = \gamma'$$ 
\begin{Corollary}\label{corollary-resonantmonomialstrans}
Assume that $\{y_0,\ldots,y_m\}$ is a transcendence basis for $F/\cC$.  Let $f \in F$ be a non-zero solution of one of the equations (1),(2) or (3) from Proposition~\ref{prop-resonantmonomial}. Then, there exists at least one index $0 \le j \le m-1$ such that
\begin{itemize}
\item[(i)]Either $a_j$ lies in a $[\te,\tl]$ segment and $y_j,z_j$ satisfy a linear resonance relation in $F_j$, 
\item[(ii)] Or $a_j$ lies in a $[\tl,\te]$, $[\tl,\tp]$, $[\tp,\te]$ or $[\tp,\tp]$ segment and $y_j,z_j$ satisfy a power resonance relation in $F_j$.
\end{itemize}
In particular, if $m = 0$ then there is no nonzero element $f \in F$ satisfying (1),(2) or (3).
\end{Corollary}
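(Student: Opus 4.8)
The plan is to promote the global resonance furnished by Proposition~\ref{prop-resonantmonomial} into a resonance confined to a single Cohen field $F_j$, and then to read off the admissible segment types from the shape of that resonance. First I would invoke Proposition~\ref{prop-resonantmonomial} to obtain, from the nonzero solution $f$, either a power resonance $\prod y_i^{u_i} z_i^{v_i}\in\cC$ or a linear resonance $\sum (u_i y_i + v_i z_i)\in\cC$ among the generators $y_0,z_0,\ldots,y_m,z_m$, with integer coefficients not all zero. Inspecting the proof of that Proposition, the monomial (resp.\ linear form) produced is not arbitrary: $y_i$ enters through a logarithmic differential precisely when its down-link $\gamma_{i+1}$ lies in $\{\te,\tp\}$ and through a plain differential when $\gamma_{i+1}=\tl$, while $z_i$ enters logarithmically when its up-link $\gamma_i$ lies in $\{\tl,\tp\}$ and plainly when $\gamma_i=\te$. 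I would keep careful track of this dictionary throughout.

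The central device is the transcendence-basis hypothesis. Because $\{y_0,\ldots,y_m\}$ is a transcendence basis and $F/\cC$ is separable, $\{dy_0,\ldots,dy_m\}$ is an $F$-basis of $\Omega^1_\cC(F)$; and because each $z_i$ is algebraic over $\cC(y_i)$, differentiating its minimal polynomial gives $dz_i = \lambda_i\, dy_i$ with $\lambda_i\in F_i$ and $\lambda_i\neq 0$ (as $z_i$ is nonconstant). I would then differentiate the resonance relation — taking the logarithmic differential in the power case and the ordinary differential in the linear case — to obtain an exact, $\partial$-annihilated $1$-form that vanishes identically, substitute $dz_i=\lambda_i\,dy_i$, and expand in the basis $\{dy_i\}$. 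Equating the coefficient of each $dy_i$ to zero yields, for every $i$, a relation living entirely inside $F_i=\cC(y_i,z_i)$; integrating such a nonzero relation back up recovers a power resonance $y_i^{u_i} z_i^{v_i}\in\cC$ (power case) or a linear resonance $u_i y_i + v_i z_i\in\cC$ (linear case) between $y_i$ and $z_i$.

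It remains to see which indices can carry a genuine relation. A one-variable relation is impossible: $y_i$ is a transcendence-basis element, and $z_i$ is nonconstant, so neither alone can be sent into $\cC$ by a nonzero exponent or coefficient. Hence a surviving coefficient equation must couple both $y_i$ and $z_i$, which by the dictionary above forces $\gamma_i\in\{\tl,\tp\}$ and $\gamma_{i+1}\in\{\te,\tp\}$ in the power case — that is, $a_i$ lies in a $[\tl,\te]$, $[\tl,\tp]$, $[\tp,\te]$ or $[\tp,\tp]$ segment — and $\gamma_i=\te$, $\gamma_{i+1}=\tl$ in the linear case, i.e.\ a $[\te,\tl]$ segment. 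Since the original resonance is nontrivial, at least one index $j$ must carry such a genuine relation; and because $y_m$ has no down-link $\gamma_{m+1}$ it never appears, so necessarily $j\le m-1$. This is exactly the dichotomy (i)--(ii). The final clause is then immediate: if $m=0$ there is no admissible index in $\{0,\ldots,m-1\}=\emptyset$, so no nonzero $f$ can exist.

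I expect the main obstacle to be the bookkeeping in the middle step: correctly matching the up-link/down-link dictionary to the segment types through the index shift (the up-link of $a_j$ acts on $z_j$ and is $\gamma_j$, whereas the down-link acts on $y_j$ and is $\gamma_{j+1}$), and verifying that the power-versus-linear alternative of Proposition~\ref{prop-resonantmonomial} is homogeneous enough that each localized relation is purely of logarithmic type or purely of plain type — so that $y_j$ and $z_j$ never appear with mismatched differential types. Handling the degenerate possibilities ($\lambda_i=0$, or a putative relation that is secretly one-variable) cleanly, via the nonconstancy of the $y_i$ and $z_i$, is the other point requiring care.
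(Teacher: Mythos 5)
Your proposal is correct and follows essentially the same route as the paper: invoke Proposition~\ref{prop-resonantmonomial} to get a global power or linear resonance, use the transcendence basis $\{y_0,\ldots,y_m\}$ (with $dz_j$ proportional to $dy_j$) to split the differentiated resonance into per-index relations, discard the one-variable possibilities by nonconstancy of $y_j$ and $z_j$, and read the admissible segment types off the index sets $I_e, I_l, I_p$ governing which variables occur in the monomial or linear form. The up-link/down-link dictionary you describe is exactly the paper's inspection of the explicit expressions for $m$ and $l$ produced in the Proposition's proof, so no further comparison is needed.
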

\begin{proof}
The hypothesis imply that $\{dy_0,\ldots,dy_m\}$ is a 
basis of $\Omega^1_\cC(F)$ and that the $F$-subspaces generated by
$\Omega^1_\cC(F_0),\ldots,\Omega^1_\cC(F_m)$ are $F$-linearly independent.  Moreover, 
since each $z_j$ is algebraic over $y_j$, the 1-form $dz_j$ lies in the one-dimensional $F$-subspace generated by $dy_j$.

From the Proposition~\ref{prop-resonantmonomial}, we conclude that if a nonzero element $f \in F$ satisfies (1), (2) or (3) then either there exists a monomial 
$M= \prod_{j=0}^m{z_j^{v_j}y_j^{u_j}}$ or a linear form $l = \sum_{j=0}^m {v_j z_j + u_j y_j}$ 
(with integers $u_j,v_j$ not all zero) which belong to $\cC$.  Taking the logarithmic derivative $dM/M$ in the former case or the derivative $dl$ in the later case, we obtain 
$$
\sum_{j=0}^m \frac{v_j}{z_j} dz_j + \frac{u_j}{y_j} dy_j = 0, \quad\text{Êor }\quad \sum_{j=0}^m v_j dz_j + u_j dy_j = 0 
$$
respectively.  Therefore, by the linear independency of $dy_0,\ldots,dy_n$, either $\frac{v_j}{z_j} dz_j + \frac{u_j}{y_j} dy_j = 0$ or $v_j dz_j + u_j dy_j = 0$ for all $0 \le j \le m$.   In the former case, we conclude that $d(z^{v_j} y_j^{u_j}) = 0$, while in the latter case $d(v_j z_j + u_j y_j) = 0$.

Now, to conclude the proof, it suffices to consider more carefully the expressions of $m$ and $l$ obtained in the proof of the previous Proposition.  
For instance, we consider the case where 
$$
m = z_0^{v_0} \prod_{j \in I_e}{y_{j-1}^{u_{j-1}}}\; \prod_{j \in I_l}{z_j^{v_j}} \; \prod_{j \in I_p}{y_{j-1}^{u_{j-1}}z_j^{v_j}} \in \cC
$$
which, by the above argument, implies a collection of power resonance relations of the form
$d(z_j^{v_j} y_j^{u_j}) = 0$, for $j = 0,\ldots,n$. 

Notice that no relation of type $d(z_j^{v_j}) = 0$ (i.e. with $u_j = 0$) or $d(y_j^{u_j})$ (i.e. with $v_j = 0$) can appear, since this would imply that $z_j$ or $y_j$ belong to $\cC$, contradicting the fact that both $y_j$ and $z_j$ are germs of invertible maps.  
Thus, there necessarily exists a monomial relation of the form $z_j^{v_j} y_j^{u_j} \in \cC$ with exponents $u_j,v_j$ both nonzero.  But looking to the above expression for $m$, we conclude that this can only happen in the index $j$ is such that
$j \in I_l \cup I_p$ and $j+1 \in I_p \cup I_e$.  This  is equivalent to say that $a_j$ lies in a $[\tl,\te]$, $[\tl,\tp]$, $[\tp,\te]$ or $[\tp,\tp]$ segment. 

The other cases can be treated in an analogous way.
\end{proof}

\subsection{Proofs of Lemmas on twisted equations}\label{subsect-prooftwisted}
We now proceed to the proof of Lemmas \ref{lemma-twisted1}, \ref{lemma-twisted2} and \ref{lemma-twisted3}.  We keep the notation introduced in subsection~\ref{subsect-twistedequations}.
\begin{proof}[Proof of Lemma \ref{lemma-twisted1}]
Let us assume that $\mu \ne 0$.  By contradiction, we assume that there exists a nonzero $f \in F$ such that
$$
\frac{\partial(f)}{f} = \partial(z_0)
$$
Writing the algebro-transcendental decomposition of the $\te$-augmented path $g_\aug$ as
$$
g_\aug = \te\, a_0 \, \gamma_1\, a_1\, \cdots \, \gamma_m\, a_m, \quad m \ge 0
$$
we let $z_j,y_j$ denote the head and tail elements of the Cohen differential field $E_j$ associated to the algebraic path $a_j$, for $j = 0,\ldots,m$.

Defining $\gamma_0 = \te$, we can now apply Corollary \ref{corollary-resonantmonomialstrans} to conclude that there exists at least one index $0 \le j \le m-1$ such that
\begin{itemize}
\item[(1)] Either $\gamma_j = \te$, $\gamma_{j+1} = \tl$ and $y_j,z_j$ satisfy a linear resonance relation, 
\item[(2)] Or $\gamma_j \in \{\tl,\tp_\alpha : \alpha \in \Omega \setminus \cQ\}$, $\gamma_{j+1} \in \{\te,\tp_\alpha : \alpha \in \Omega \setminus \cQ \}$ and $y_j,z_j$ satisfy a power resonance relation.
\end{itemize}
If $m = 0$ we get our desired contradiction.  If $m \ge 1$, we will deduce the contradiction using Corollary~\ref{corollary-ofcohen}.

For this, we treat cases (1) and (2) separately.  To simplify the notation, we define
$$a = a_j, \quad y = y_j\quad \text{Êand } \quad z = z_j.$$  
and write the expansion of the algebraic path $a$ (of affine type) as
$$
a = \theta_0 \,  p_1 \,  \theta_1 \,  \cdots \,  p_n \,  \theta_n, \quad n \ge 0.
$$
In the case (1), $y$ and $z$ satisfy a relation of the form $v z  + u y = c$, for some  $u,v \in \cZ^*$ and $c \in \cC$.  
We consider then the {\em modified algebraic path}
$$
a^* = \text{ normal form reduction of }\ts_{-v/u}\,  \tt_{-c/v}\, a.
$$
Explicitly, for $a$ given as above, we can write 
$$
a^* = \theta^*_0 \,  p_1 \,  \theta_1 \,  \cdots \,  p_n \,  \theta_n
$$
where the M\" oebius part of $a^*$ is given by $\theta^*_0 = \ts_{-v/u}\,  \tt_{-c/v} \theta_0$.  In particular, the assumption that $a$ is an algebraic path of affine type implies that the same property holds for $a^*$.   

Now, by the definition of $a^*$, the head and tail elements $z^*$ and $y^*$ of the Cohen field $E^*$ associated to $a^*$ should satisfy the relation 
$$\frac{y^*}{z^*} = 1$$
Hence, we will obtain the desired contradiction to Corollary~\ref{corollary-ofcohen} once we show that
$$a^* \ne \id.$$  
To prove that this always holds, observe that $a^* = \id$ if and only if $a = (\ts_{-v/u}\,  \tt_{-c/v})^{-1} = \tt_{c/v}\ts_{-u/v}$.  Since $a$ is a maximal algebraic subpath (lying in a $[\te,\tl]$ segment) of the augmented path $g_\aug$, this would contradict the hypothesis that $g_\aug$ is a nice augmentation of $g$, 
as stated in Subsection~\ref{subsect-twistedequations} .  

Indeed, if either $c \ne 0$ or $-u/v \notin \Omega \setminus \{1\}$ then the subpath $\te a \tl$ is certainly not in normal form.  On the other hand, if $c = 0$ and $-u/v \in \Omega \setminus \{1\}$ then, according to our definition of algebro-transcendental decomposition the corresponding subpath $\te \ts_{-u/v} \tl$ should instead be considered as a rational power map 
$\tp_{-u/v}$.  This concludes the proof of (1).

Consider now the case (2). We write the corresponding power resonance relation as $y^u z^v = c$, for some $u,v \in \cZ^*$ and $c \in \cC^*$.
Since the algebraic path $a$ lies in a $[\gamma,\eta]$-segment (with $\gamma \in \{\tl,\tp\}$ and $\eta \in \{\te,\tp\}$) and $g_\aug$ is a nice augmentation of $g$, it follows from the definition of $\NF$ that its M\" oebius part $\theta_0$ necessarily lies in  $\trans_1 \setminus \{\id\}$.  

We introduce now the modified algebraic path
$$
a^* = \ts_{c^{1/u}}\tp_{v/u}\, a.
$$
which is also a non-identity normal form by the discussion of the above paragraph. Similarly to the previous case, the head and tail elements $z^*,y^*$ of the Cohen field $E^*$ associated to $a^*$ satisfy the relation 
$\frac{y^*}{z^*} = 1$
(up to a convenient choice of the branch of $c^{1/u}$).  Furthermore, $a^*$ is an algebraic normal form of affine type and the above identity contradicts Corollary~\ref{corollary-ofcohen} when applied to $a^*$.  This concludes the proof of the Lemma. 
\end{proof}

\begin{proof}[Proofs of Lemmas \ref{lemma-twisted2} and \ref{lemma-twisted3}]  We follow exactly the same strategy of the previous proof.  

Namely,  we assume for a contradiction that $c \ne 0$ and  
that there exists a nonzero element $f \in F$ satisfying one of the following two equations
$$
\frac{\partial(f)}{f} = \mu \frac{\partial(z_0)}{z_0}\quad \text{ or }\quad \partial(f) = \frac{\partial(z_0)}{z_0} \; 
$$
where $\mu \in \cC \setminus \cQ$. Considering the algebraic transcendental decomposition of the augmented path $g_\aug$, the same alternatives 
(1) and (2) listed in the previous proof appear. By repeating the same reasoning, we obtain a contradiction. 
\end{proof}
\section{Some consequences}
We proceed to prove the other results stated in the Introduction. 
\begin{proof}[Proof of Theorems~\ref{theorem-amalgamatedpoweraffine}, \ref{theorem-NFinWitt} and \ref{theorem-NFinmonoid}]
We will only prove Theorem~\ref{theorem-amalgamatedpoweraffine}, since the other two results are immediate consequences.

First of all, we remark that $\Gr_{\Aff,\Pow_R}$ is a subgroupoid of $\Gr_{\PSL,\Exp}$. 
Therefore, if we consider the free product groupoid $\Free = \Gr_{\PSL}\, *\, \PGr_{\Exp}$ and the groupoid morphism 
$$\vphi: \Free \to \Gr_{\PSL,\Exp}$$ 
defined in subsection \ref{subsect-listrel}, then each germ lying in the $\Gr_{\Aff,\Pow_R}$ is the image of a (not necessarily unique) path in $\Free$.  Further, we can assume that such path of the form
$$
g = \theta_0 \, \tp_{r_1} \, \theta_1\, \cdots\, \tp_{r_n} \, \theta_n, \quad n \ge 1
$$
where each $\tp_{r_i}$ is a power map with exponent  $r_i \in R$ and each $\theta_i$ is an affine map.  Possibly making some simplifications,  
we can further assume that $\theta_1,\ldots,\theta_{n} \ne \id$ and that $r_1,\ldots,r_n \ne 1$. 

As a consequence, $g$ is a {\em product normal form}, i.e.~an element of the subset $\PNF \subset \Free$ given by Definition~\ref{def-productnf}.  Applying 
the reduction system $(\PNF,\rightarrow)$ defined in subsection~\ref{subsect-confluentnf}, we can make the reduction 
$$
g \rightstar g^\prime
$$
where $g^\prime$ has the same form as $g$, but with the additional property that {\em each affine map $\theta_1,\ldots,\theta_n$ is a translation}.
The subset of paths in $\Free$ satisfying these properties will be called {\em normal forms of power-translation type}, and noted $\NFPT$.

Notice that a path in $\NFPT$ is not necessarily an element of $\NF$ (see definition~\ref{def-nf}), because the exponents $r_1,\ldots,r_n$ of the power maps do not necessarily lie in the region $\Omega$ described in Remark~\ref{remark-defNF}.

However, the reduction from $\NFPT$ to $\NF$ can be easily obtained.  Indeed, assuming that $g \in \NFPT$ is written as above, its normal form reduction $g \rightstar h$ gives the path
$$h = \theta_0 \ti^{\eps_1} \, \tp_{s_1} \, \theta_1\ti^{\eps_2}\, \tp_{s_2} \cdots\, \ti^{\eps_n}\tp_n \, \theta_n$$
where we define each pair $(s_i,\eps_i) \in \Omega \times \{0,1\}$ as follows: 
$$
(s_i,\eps_i) = 
\begin{cases}
 (r_i,0)  & \text{Êif $r_i \in \Omega$}\\
 (-r_i,1) & \text{ if $-r_i \in \Omega$}.  
\end{cases}
$$
We remark the following two facts:
\begin{itemize}
\item[(i)] The normal form $h$ lies in $\NF_\tame$.
\item[(ii)] If $g,g^\prime \in \NFPT$ reduce to a same normal form $h \in \NF_\tame$ then necessarily $g = g^\prime$.
\end{itemize}
Indeed, the assumption $R \cap \cQ_{<0} = \emptyset$ implies that a subpath of the form $\theta_i\ti$ appears in the expansion of $h$ if and only if the 
power map $\tp_{r_{i+1}}$ has an exponent in $R \setminus \cQ$.  Therefore, the algebro-transcendental decomposition of $h$ can only contain maximal algebraic subpaths of affine type. This proves (i).

The proof of (ii) is immediate, since the original powers $r_1,\ldots,r_n \in R$ can be read out from the expression of the normal form.

Based on these remarks, the result is now an immediate consequence of the second part of the Main Theorem.
\end{proof}
\begin{proof}[Proof of Theorems~\ref{Theorem-realaffineexp} and \ref{theorem-lemme1}]
We will only give the details of the proof of Theorem~\ref{Theorem-realaffineexp}, since Theorem~\ref{theorem-lemme1} is an immediate consequence of this result.

Keeping the notation of subsection~\ref{subsect-HNN-ext}, we want to prove that the homomorphism 
$$
\phi: \Aff^+_{*\theta} \rightarrow \gr_{\Aff^+,\Exp}. 
$$
is injective.  

Using Britton's normal form (see e.g. \cite{MR1812024}, IV.2), and the right transversals to $T_0,T_1$ to $H_0,H_1$ defined in Remark~\ref{remark-defNF}, it follows that 
we can (setwise) identify $\Aff^+_{*\theta}$ to a set
$\BNF$ (so-called {\em Britton normal forms}) contained in the free product $\Aff^+ \,*\, \langle \stable^i : i \in \cZ \rangle$ 
(where $\stable$ denotes the stable letter of the HNN-extension).  By definition, each $f \in \BNF$ can be uniquely written as
$$
f = \theta_0\, \gamma_1 \, \theta_1\, \cdots\, \gamma_n\, \theta_n, \quad n \ge 0
$$
where $\theta_0,\ldots,\theta_{n}$ are affine maps, $\gamma_i \in \{\stable,\stable^{-1}\}$, and
\begin{itemize}
\item[(i)] If $\gamma_i = \stable$ then $\theta_i \in S^+$
\item[(ii)] If $\gamma_i = \stable^{-1}$ then $\theta_i \in  T$.
\item[(iii)] There are no subwords of the form $\stable \id \stable^{-1}$ or $\stable^{-1}\id \stable$.
\end{itemize}
The set $\BNF$ has a natural group structure which is inherited from the group structure of $\Aff^+_{*\theta}$.

Using the above expansion for $f \in \BNF$, we define {\em maximal interval of existence }$I_f \in (\cR,+\infty)$ of $f$ as the largest open neighborhood of $\infty$ 
(of the form $]A_f,+\infty[$ for some $A_f \in \cR$) such that each one of the $n+1$ {\em truncations} of the above normal form, namely
$$
f^{[i]} = \theta_i\, \gamma_{i+1}\, \theta_{i+1}\cdots\, \gamma_n\, \theta_n\, \quad \text{Êfor }i = 0,\ldots,n
$$
maps under $\phi$ to a germ $\phi(f^{[i]}) \in \gr_{\Aff^+,\Exp}$ which extends analytically to an invertible function defined on the interval $I_f$. To simplify the notation, we denote also by
$$
\phi(f) \colon I_f \to \cR
$$
the corresponding (uniquely determined) analytic function.  

Now, we consider a mapping $\rho_1 :  \BNF \to \NF_\tame$ which sends an element $f \in \BNF$ to a tame normal form 
$\rho_1(f) \in \NF_\tame$.  If $f$ is  written as above, this mapping is defined as follows: 
\begin{itemize}
\item[(a)] Each symbol $\theta_i$ is replaced by a corresponding germ of affine map;
\item[(b)] Each symbol $\stable$ (resp.~$\stable^{-1}$) is replaced by a germ of exponential (resp.~principal branch of logarithm) map;
\item[(c)]  The source point of the rightmost affine germ $\theta_n$ is chosen to be $A_f+1$ (or $0$ if $A_f = -\infty$).
\end{itemize}
Notice that condition (c) uniquely determines the choice of all germs given in (a) and (b) due to the necessarily source/target compatibility conditions.  Consequently, the mapping is well-defined by these conditions and, moreover, injective.

Similarly, we consider the mapping
$$\rho_2 :  \BNF \to \Gr_{\PSL,\Exp}$$
defined as follows:  given $f \in \BNF$, we consider the analytic function $\phi(f) : I_f \to \cR$ and let
$\rho_2(f) \in \Gr_{\PSL,\Exp}$ be the germ of  $\phi(f)$ at the point $A_f+1$ (or at $0$ if $A_f = -\infty$). 

By construction, if $\vphi : \NF \to \Gr_{\PSL,\Exp}$ denotes the mapping defined at the Main Theorem, the following diagram 
$$
\begin{tikzpicture}
  \matrix (m) [matrix of math nodes,row sep=3em,column sep=5em,minimum width=2em,text height=1.5ex, text
depth=0.25ex]
  {
     \BNF & \NF_\tame \\
              &   \Gr_{\PSL,\Exp}\\};
  \path[-stealth]
    (m-1-1) edge node [above] {\small $\rho_1$} (m-1-2)
            edge  node [above]  {\small $\rho_2$} (m-2-2)
    (m-1-2) edge node [right] {\small $\vphi$} (m-2-2);
\end{tikzpicture}
$$
is commutative.

Now, we reason by contradiction assuming that there exists a non-identity Britton normal form $f \in \BNF$ lying in the kernel of $\phi$.  
Then, it follows that $\phi(f) : I_f \to \cR$ is the identity map and, consequently, that $\rho_2(f)$ is the identity germ.  
On the other hand, $\rho_1(f)$ is a non-identity tame normal form and it follows from the Main Theorem that $\vphi\circ\rho_1(f)$ cannot be the identity germ.  This is
a contradiction.
\end{proof}
\bibliographystyle{plain}
\bibliography{mybibliography}
\Addresses
\end{document}